\newtheorem{theorem}{Theorem}[section]
\newtheorem{corollary}[theorem]{Corollary}
\newtheorem{lemma}[theorem]{Lemma}
\newtheorem{remark}[theorem]{Remark}
\newtheorem{problem}{Problem}
\numberwithin{equation}{section}
\begin{document}

\title[an inverse acoustic-elastic interaction problem]{An inverse acoustic-elastic interaction problem with phased or phaseless far-field data}

\author{Heping Dong}
\address{School of Mathematics, Jilin University, Changchun,  Jilin 130012, P. R. China}
\email{dhp@jlu.edu.cn}

\author{Jun Lai}
\address{School of Mathematical Sciences, Zhejiang University, Hangzhou, Zhejiang 310027, China}
\email{laijun6@zju.edu.cn}

\author{Peijun Li}
\address{Department of Mathematics, Purdue University, West Lafayette, Indiana 47907, USA}
\email{lipeijun@math.purdue.edu}


\subjclass[2010]{78A46, 65N21}

\keywords{elastic wave equation, inverse fluid-solid interaction problem, phaseless data, Helmholtz decomposition, boundary integral equations}

\begin{abstract}
Consider the scattering of a time-harmonic acoustic plane wave by a bounded elastic obstacle which is immersed in a homogeneous acoustic medium. This paper concerns an inverse acoustic-elastic interaction problem, which is to determine the location and shape of the elastic obstacle by using either the phased or phaseless far-field data. By introducing the Helmholtz decomposition, the model problem is reduced to a coupled boundary value problem of the Helmholtz equations. The jump relations are studied for the second derivatives of the single-layer potential in order to establish the corresponding boundary integral equations. The well-posedness is discussed for the solution of the coupled boundary integral equations. An efficient and high order Nystr\"{o}m-type discretization method is proposed for the integral system. A numerical method of nonlinear integral equations is developed for the inverse problem. For the case of phaseless data, we show that the modulus of the far-field pattern is invariant under a translation of the obstacle. To break the translation invariance, an elastic reference ball technique is introduced. We prove that the inverse problem with phaseless far-field pattern has a unique solution under certain conditions. In addition, a numerical method of  the reference ball technique based nonlinear integral equations is also proposed for the phaseless inverse problem. Numerical experiments are provided to demonstrate the effectiveness and robustness of the proposed methods.
\end{abstract}

\maketitle

\section{Introduction}

Consider the scattering of a time-harmonic acoustic plane wave by a bounded penetrable obstacle, which is immersed in an open space occupied by a homogeneous acoustic medium such as some compressible inviscid air or fluid. The obstacle is assumed to be a homogeneous and isotropic elastic medium. When the incident wave impinges the obstacle, a scattered acoustic wave will be generated in the open space and an elastic wave is induced simultaneously inside the obstacle. This scattering phenomenon leads to an acoustic-elastic interaction problem (AEIP). Given the incident wave and the obstacle, the direct acoustic-elastic interaction problem (DAEIP) is to determine the pressure of the acoustic wave field and the displacement of the elastic wave field in the open space and in the obstacle, respectively; the inverse acoustic-elastic interaction problem (IAEIP) is to determine the elastic obstacle from the far-field pattern of the acoustic wave field. The AEIPs have received ever-increasing attention due to their significant applications in seismology and geophysics \cite{LL-86}. Despite many work done so far for both of the DAEIP and IAEIP, they still present many challenging mathematical and computational problems due to the complex of the model equations and the associated Green tensor, as well as the nonlinearity and ill-posedness.

The phased IAEIP referres to the IAEIP that determines the location and shape of the elastic obstacle from the phased far-field data, which contains both the phase and amplitude information. It has been extensively studied in the recent decades. In \cite{EHR2008, EHR2009}, an optimization based variational method and a decomposition method were proposed to the IAEIP. The direct imaging methods, such as the linear sampling method \cite{MS2009, MS2011} and the factorization method \cite{KR2012, YHXZ2016, HKY2016}, were also developed to the corresponding inverse problems with far-field and near-field data. For the theoretical analysis, the uniqueness results may be found in \cite{MS2009, QYZ2018} for the phased IAEIP.

The phaseless IAEIP is to determine the location and shape of the elastic obstacle from the modulus of the far-field acoustic scattering data, which contains only the amplitude information. Due to the translation invariance property of the phaseless far-field field, it is impossible to uniquely determine the location of the unknown object by plane incident wave, which makes the phaseless inverse problem much more challenging than the phased counterpart. Various numerical methods have been proposed to solve the phaseless inverse obstacle scattering problems, especially for the acoustic waves which are governed by the scalar Helmholtz equation. For the shape reconstruction with one incident plane wave, we refer to the Newton iterative method \cite{RW1997}, the nonlinear integral equation method \cite{Ivanyshyn2007, OR2010}, the fundamental solution method \cite{KarageorghisAPNUM}, and the hybrid method \cite{Lee2016}. In particular, the nonlinear integral equation method, which was proposed by Johansson and Sleeman \cite{TB2007}, was extended to reconstruct the shape of a sound-soft crack by using phaseless far-field data from a single incident plane wave \cite{GDM2018}. To reconstruct the location and shape simultaneously, Zhang et al. \cite{ZhangBo2017, ZhangBo2018} proposed an iterative method by using the superposition of two plane waves with different incident directions to reconstruct the unknown object. In \cite{JiLiuZhang2019}, a phase retrieval technique combined with the direct sampling method was proposed to reconstruct the location and shape of an obstacle from phaseless far-field data. The method was extended to the phaseless inverse elastic scattering problem and phaseless IAEIP \cite{JiLiu2018}. We refer to \cite{XZZ2018, ZG18, SZG2018, ZWGL2019, ZSGL2019} for the uniqueness results on the inverse scattering problems by using phaseless data. Related phaseless inverse scattering problems as well as numerical methods can be found in \cite{Ammari2016, Li2017, JiLiuZhang2018, CH2016, BLL2013, Bao2016, ZGLL18, Klibanov2019}. Recently, a reference ball technique based nonlinear integral equations method was proposed in \cite{DZhG2018} to break the translation invariance from phaseless far-field data by one incident plane wave. In our recent work \cite{DLL2019}, we extended this method to the inverse elastic scattering problem with phaseless far-field data by using a single incident plane wave to recover both the location and shape of a rigid elastic obstacle.

In this paper, we consider both the DAEIP and IAEIP. In particular, we study the IAEIP of determining the location and shape of an elastic obstacle from the phased or phaseless far-field data with a single incident plane wave. The goal of this work is fivefold:

\begin{enumerate}

\item deduce the jump relations for the second derivatives of the single-layer potential and the coupled system of boundary integral equations;

\item prove the well-posedness of the solution for the coupled system and develop a Nystr\"{o}m-type discretization for the boundary integral equations;

\item show the translation invariance of the phaseless far-field pattern and present a uniqueness result for the phaseless IAEIP;

\item propose a numerical method of nonlinear integral equations to reconstruct the obstacle's location and shape by using the phased far-field data from a single plane incident wave;

\item develop a reference ball based method to reconstruct both the obstacle's location and shape by using phaseless far-field data from a single plane incident wave.

\end{enumerate}

For the direct problem, instead of considering directly the coupled acoustic and elastic wave equations, we make use of the Helmholtz decomposition and reduce the model problem into a coupled boundary value problem of the Helmholtz equations. The method of boundary integral equations is adopted to solve the coupled Helmholtz system. However, the boundary conditions are more complicated, since the second derivatives of surface potentials are involved due to the traction operator. Therefore, we investigate carefully the jump relations for the second derivatives of the single-layer potential and establish coupled boundary integral equations. Moreover, we prove the existence and uniqueness for the solution of the coupled boundary integral equations, and develop a Nystr\"{o}m-type discretization to efficiently and accurately solve the direct acoustic-elastic interaction problem. The proposed method is extremely efficient for the direct scattering problem since we only need to solve the scalar Helmholtz equations instead of solving the vector Navier equations. Related work on the direct acoustic-elastic interaction problems and time-domain acoustic-elastic interaction problem can be found in \cite{BGL2018, JL2017, LukeMartin1995, YHX2017}.

For the inverse problem, motivated by the reference ball technique \cite{LiJingzhi2009, ZG18} and the recent work \cite{DLL2019, DZhG2018}, we give a uniqueness result for the phaseless IAEIP by introducing an elastic reference ball, and also propose a nonlinear integral equations based 
iterative numerical scheme to solve the phased and phaseless IAEIP. Since the location of reference ball is known, the method breaks the translation invariance and is able to recover the location information of the obstacle with negligible additional computational costs. Numerical results show that the method is effective and robust to reconstruct the obstacle with either the phased or phaseless far-field data.

The paper is organized as follows. In Section 2, we introduce the coupled acoustic-elastic interaction problem and show the uniqueness for the coupled boundary value problem by using the Helmholtz decomposition. In Section 3, we study the jump properties for the second derivatives of the single-layer potential and establish the coupled boundary integral equations. The existence and uniqueness of the solution for the coupled boundary integral equations are given. Section 4 is devoted to the translation invariance and the uniqueness for the phaseless IAEIP. Section 5 presents a high order Nystr\"{o}m-type discretization to solve the coupled boundary value problem. In Section 6, a method of nonlinear integral equations and a reference ball based method are developed to solve the phased and phaseless inverse problems, respectively. Numerical experiments are provided to demonstrate the effectiveness of the proposed methods in Section 7. The paper is concluded with some general remarks and directions for future work in Section 8.

\section{Problem formulation}

Consider the scattering problem of a time-harmonic acoustic plane wave by a two-dimensional elastic obstacle $D$ with $\mathcal{C}^2$ boundary $\Gamma_D$. The elastic obstacle $D$ is assumed to be homogeneous and isotropic with a mass density $\rho_{\rm e}$; the exterior domain $\mathbb{R}^2\setminus \overline{D}$ is assumed to be filled with a homogeneous and compressible inviscid air or fluid with a mass density $\rho_{\rm a}>0$. Denote by $\nu=(\nu_1, \nu_2)^\top$ and $\tau=(-\nu_2, \nu_1)^\top$ the unit normal vector and the tangential vector on $\Gamma_D$, respectively. Let $\nu_\perp=(\nu_2,-\nu_1)^\top=-\tau$. Given a vector function $\boldsymbol U=(U_1, U_2)^\top$ and a scalar function $u$, we introduce the scalar and vector curl operators
\[
{\rm curl}~\boldsymbol  U=\partial_{x_1}U_2-\partial_{x_2}U_1, \quad {\bf curl}~u=(\partial_{x_2}u, -\partial_{x_1}u)^\top.
\]

Specifically, the time-harmonic acoustic plane wave is given by $u^{\rm inc}(x)=e^{{\rm i}\kappa_{\rm a} x\cdot d}$, where $d=(\cos\theta,\sin\theta)^\top$ is the propagation direction vector, $\theta\in [0, 2\pi)$ is the incident angle. Given the incident field $u^{\rm inc}$, the direct problem is to find the elastic wave displacement $\boldsymbol{U}\in(C^2(D)\cap C(\overline{D}))^2$ and the acoustic wave pressure $u\in C^2(\mathbb{R}^2\setminus \overline{D})\cap C(\mathbb{R}^2\setminus D)$, which satisfy the Navier equation and the Helmholtz equation, respectively:
\begin{align}
\mu\Delta\boldsymbol{U}+(\lambda+\mu)\nabla\nabla\cdot\boldsymbol{U}+\omega^2\rho_{\rm e}\boldsymbol{U}&=0 \quad {\rm in~}\, {D}, \label{Navier equation}\\
\Delta u+\kappa_{\rm a}^2 u&=0 \quad{\rm in~}\,\mathbb{R}^2\setminus \overline{D}. \label{Helmholtz equation}
\end{align}
Moreover, $\boldsymbol U$ and $u$ are required to satisfy the transmission conditions
\begin{equation}\label{transmission}
T(\boldsymbol{U})=-u\nu,\quad  \boldsymbol{U}\cdot\nu=\frac{1}{\omega^2\rho_{\rm a}}\partial_\nu u \quad{\rm on~}\,\Gamma_D.
\end{equation}
The scattered acoustic wave pressure $u^{\rm s}:=u-u^{\rm inc}$ is required to satisfy the Sommerfeld radiation condition 
\begin{align}\label{radiation}
\lim_{r\to\infty}r^{\frac{1}{2}}(\partial_{r}u^{\rm s}-{\rm i}\kappa_{\rm a} u^{\rm s})=0, \quad r=|x|. 
\end{align}
Here $\omega>0$ is the angular frequency, $\kappa_{\rm a}=\omega/c$ is the wavenumber in the air/fluid with the sound speed $c$, and $\lambda, \mu$ are the Lam\'{e} parameters satisfying $\mu>0, \lambda+\mu>0$. The traction operator $T$ is defined by 
\begin{align*}
T(\boldsymbol{U}):=\mu\partial_\nu\boldsymbol{U}+(\lambda+\mu)(\nabla\cdot\boldsymbol{U})\nu.
\end{align*}

It can be shown (cf. \cite{LukeMartin1995,YHX2017}) that the scattering problem \eqref{Navier equation}--\eqref{radiation} admits a unique solution $(\boldsymbol{U}, u)$ for all but some particular frequencies $\omega$, which are called the Jones frequencies \cite{Jones1983}. At the Jones frequency, the acoustic wave field $u$ is unique, but the elastic field $\boldsymbol{U}$ is not unique. Since the Jones frequency happens only for some special geometries \cite{Jones1983}, for simplicity, we assume that $D$ does not admit any Jones mode in this work.

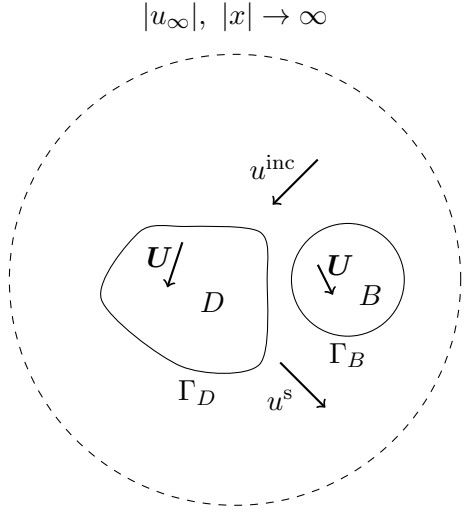
\begin{figure}
	\centering
	\begin{tikzpicture}
	\pgfmathsetseed{8}
	\draw plot [smooth cycle, samples=7, domain={1:8}] (\x*360/8+5*rnd:0.5cm+1cm*rnd) node at (0.2,-0.3) {$D$}; 
	\draw node at (0, -1.5) {$\Gamma_D$};
	\draw (2, 0) circle (0.75cm) node at (2.3,-0.2) {$B$}; 
	\draw node at (2, -1) {$\Gamma_B$};
	\draw [->,thick] (1.1,-1.1)--(1.7,-1.7) node at (1,1.5) {${u}^{\rm inc}$}; 
	\draw [->,thick] (-0.2,0.5)--(-0.4,-0.1) node at (-0.5,0.3) {$\boldsymbol{U}$}; 
	\draw [->,thick] (1.6,0.2)--(1.8,-0.2) node at (1.9,0.2) {$\boldsymbol{U}$}; 
	\draw [->,thick] (1.6,1.6)--(1, 1) 
	node at (1.1,-1.6) {$u^{\rm s}$}; 
	\draw [dashed] (0.5, 0) circle (3cm) node at (0.5, 3.5) {$|u_\infty|, ~|x|\to \infty$ }; 
	\end{tikzpicture}
	\caption{Geometry of the scattering problem with a reference ball.} \label{fig:illustration}
\end{figure}

For any solution $\boldsymbol U$ of the elastic wave equation \eqref{Navier equation}, we introduce the Helmholtz decomposition 
\begin{equation}\label{HelmDeco}
\boldsymbol{U}=\nabla\phi+\boldsymbol{\rm curl}~\psi,
\end{equation}
where $\phi, \psi$ are two scalar potential functions. Substituting \eqref{HelmDeco} into \eqref{Navier equation} yields
\[
\nabla [(\lambda+2\mu)\Delta\phi+\omega^2\rho_{\rm e}\phi]
+\boldsymbol{\rm curl}~(\mu\Delta\psi+\omega^2\rho_{\rm e}\psi)=0,
\]
which is fulfilled if $\phi$ and $\psi$ satisfy the Helmholtz equation with a different wavenumber, respectively:
\[
\Delta\phi+\kappa_{\rm p}^{2}\phi=0, \quad \Delta\psi+\kappa_{\rm s}^{2}\psi=0.
\]
Here
\[
\kappa_{\rm p}=\omega\left(\frac{\rho_{\rm e}}{\lambda+2\mu}\right)^{1/2},\quad\kappa_{\rm s}=\omega\left(\frac{\rho_{\rm e}}{\mu}\right)^{1/2},
\]
are the compressional wavenumber and the shear wavenumber, respectively.

Substituting the Helmholtz decomposition into \eqref{transmission} and taking the dot product with $\nu$ and $\tau$, respectively, we obtain 
\begin{align*}
\mu\nu\cdot\partial_\nu\nabla\phi+\mu\nu\cdot\partial_\nu\boldsymbol{\rm curl}~\psi -(\lambda+\mu)\kappa^2_{\rm p}\phi+u^{\rm s}&=f_1,\\
\mu\tau\cdot\partial_\nu\nabla\phi+\mu\tau\cdot\partial_\nu\boldsymbol{\rm curl}~\psi&=f_2, \\
\partial_\nu\phi+\partial_\tau\psi-\partial_\nu u^{\rm s}/(\omega^2\rho_{\rm a})&=f_3,
\end{align*}
where
\[
f_1=-u^{\rm inc},\quad f_2=0, \quad f_3=\partial_\nu u^{\rm inc}/(\omega^2\rho_{\rm a}).
\]

In summary, the scalar potential functions $\phi, \psi$ and the scattered acoustic wave $u^{\rm s}$ satisfy the following coupled
boundary value problem
\begin{align}\label{HelmholtzDec}
\begin{cases}
\Delta\phi+\kappa_{\rm p}^{2}\phi=0, \quad \Delta\psi+\kappa_{\rm s}^{2}\psi=0, \quad &{\rm in} ~D,\\
\Delta u^{\rm s}+\kappa_{\rm a}^2 u^{\rm s}=0, \quad &{\rm in} ~\mathbb{R}^2\setminus\overline{D},\\
\mu\nu\cdot\partial_\nu\nabla\phi+\mu\nu\cdot\partial_\nu\boldsymbol{\rm curl}~\psi -(\lambda+\mu)\kappa^2_{\rm p}\phi+u^{\rm s}=f_1, \quad &{\rm on} ~ \Gamma_D,\\
\tau\cdot\partial_\nu\nabla\phi+\tau\cdot\partial_\nu\boldsymbol{\rm curl}~\psi=f_2, \quad &{\rm on} ~ \Gamma_D,\\
\partial_\nu\phi+\partial_\tau\psi-\partial_\nu u^{\rm s}/(\omega^2\rho_{\rm a})=f_3, \quad &{\rm on} ~ \Gamma_D,\\
\displaystyle{\lim_{r\to\infty}r^{\frac{1}{2}}(\partial_{r}u^{\rm s}-{\rm i}\kappa_{\rm a}u^{\rm s})=0}, \quad &r=|x|.
\end{cases}
\end{align}

The following result concerns the uniqueness of the boundary value problem \eqref{HelmholtzDec}.

\begin{theorem}\label{uniquethm}
The coupled boundary value problem \eqref{HelmholtzDec} has at most one solution for $\kappa_{\rm p}>0, \kappa_{\rm s}>0, \kappa_{\rm a}>0$.
\end{theorem}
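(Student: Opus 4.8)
The plan is to show that a solution with vanishing data $f_1=f_2=f_3=0$ must be identically zero. First I would reverse the Helmholtz decomposition: given $\phi,\psi$ solving the Helmholtz equations in $D$, set $\boldsymbol U=\nabla\phi+\boldsymbol{\rm curl}~\psi$, so that $\boldsymbol U$ solves the Navier equation \eqref{Navier equation} in $D$, and the three homogeneous boundary identities on $\Gamma_D$ reassemble (taking the dot products with $\nu$ and $\tau$ in reverse) into the homogeneous transmission conditions $T(\boldsymbol U)=-u^{\rm s}\nu$ and $\boldsymbol U\cdot\nu=\frac{1}{\omega^2\rho_{\rm a}}\partial_\nu u^{\rm s}$ on $\Gamma_D$, with $u^{\rm s}$ radiating. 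Thus it suffices to prove the uniqueness for the original coupled acoustic-elastic system \eqref{Navier equation}--\eqref{radiation} with zero incident field, and then recover $\phi,\psi$ from $\boldsymbol U$.

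The core step is the usual energy/Rellich argument on the exterior domain. Apply Green's first identity to $u^{\rm s}$ over $\Omega_R\setminus\overline D$, where $\Omega_R$ is a large ball of radius $R$, using $\Delta u^{\rm s}+\kappa_{\rm a}^2 u^{\rm s}=0$:
\[
\int_{|x|=R}\bar u^{\rm s}\,\partial_r u^{\rm s}\,\mathrm ds
=\int_{\Gamma_D}\bar u^{\rm s}\,\partial_\nu u^{\rm s}\,\mathrm ds
+\int_{\Omega_R\setminus\overline D}\bigl(|\nabla u^{\rm s}|^2-\kappa_{\rm a}^2|u^{\rm s}|^2\bigr)\,\mathrm dx,
\]
with $\nu$ the outward normal of $D$. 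Likewise apply Betti's first identity to $\boldsymbol U$ over $D$ using the Navier equation:
\[
\int_{\Gamma_D}\overline{\boldsymbol U}\cdot T(\boldsymbol U)\,\mathrm ds
=\int_D\bigl(\mathcal E(\boldsymbol U,\overline{\boldsymbol U})-\omega^2\rho_{\rm e}|\boldsymbol U|^2\bigr)\,\mathrm dx,
\]
where $\mathcal E$ denotes the (real, symmetric) elastic energy density. Now substitute the homogeneous transmission conditions: on $\Gamma_D$ one has $\overline{\boldsymbol U}\cdot T(\boldsymbol U)=-\bar u^{\rm s}\,\overline{\boldsymbol U}\cdot\nu\cdot$ — more precisely $T(\boldsymbol U)=-u^{\rm s}\nu$ and $\overline{\boldsymbol U}\cdot\nu=\frac1{\omega^2\rho_{\rm a}}\partial_\nu\bar u^{\rm s}$, hence $\int_{\Gamma_D}\overline{\boldsymbol U}\cdot T(\boldsymbol U)\,\mathrm ds=-\frac1{\omega^2\rho_{\rm a}}\int_{\Gamma_D}\partial_\nu\bar u^{\rm s}\, u^{\rm s}\,\mathrm ds$. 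Combining the two identities, multiplying the elastic one by $\omega^2\rho_{\rm a}$ (a positive constant), and taking imaginary parts, the volume integrals are real and drop out, leaving
\[
\operatorname{Im}\int_{|x|=R}\bar u^{\rm s}\,\partial_r u^{\rm s}\,\mathrm ds=0 .
\]
By the Sommerfeld radiation condition and Rellich's lemma, this forces $u^{\rm s}\equiv 0$ in $\mathbb R^2\setminus\overline D$.

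With $u^{\rm s}\equiv 0$ outside, the transmission conditions give $T(\boldsymbol U)=0$ and $\boldsymbol U\cdot\nu=0$ on $\Gamma_D$; then Betti's identity over $D$ with real $\omega^2\rho_{\rm e}$ and the standard argument (the solution of the Navier equation with both traction and normal displacement vanishing, under the standing assumption that $D$ supports no Jones mode) yields $\boldsymbol U\equiv 0$ in $D$. Finally I would translate this back to $\phi$ and $\psi$: from $\boldsymbol U=\nabla\phi+\boldsymbol{\rm curl}~\psi=0$ together with $\Delta\phi+\kappa_{\rm p}^2\phi=0$, $\Delta\psi+\kappa_{\rm s}^2\psi=0$ and $\kappa_{\rm p},\kappa_{\rm s}>0$, one deduces $\nabla\phi=0$ hence $\kappa_{\rm p}^2\phi=-\Delta\phi=0$ so $\phi\equiv0$, and similarly $\psi\equiv0$ (up to an irrelevant additive constant, which does not affect $\boldsymbol U$ and may be normalized away).

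I expect the main obstacle to be the careful handling of the boundary term after substituting the transmission conditions — namely verifying that, once the elastic identity is scaled by $\omega^2\rho_{\rm a}$, the coupling terms on $\Gamma_D$ cancel \emph{exactly} so that only the far-field flux survives — together with the invocation of the no-Jones-mode hypothesis to pass from vanishing Cauchy-type data on $\Gamma_D$ to $\boldsymbol U\equiv0$ inside $D$ (for a genuine Jones mode one would only get uniqueness of $u$, not of $\boldsymbol U$, which is precisely why that hypothesis was imposed in the problem formulation).
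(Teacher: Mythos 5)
Your proposal is correct and follows essentially the same route as the paper: reassemble the homogeneous boundary conditions into the transmission conditions for $\boldsymbol U=\nabla\phi+\boldsymbol{\rm curl}\,\psi$, use the Green/Betti energy identity so that only $\Im\int u^{\rm s}\overline{\partial_\nu u^{\rm s}}\,\mathrm ds$ survives, invoke Rellich's lemma to get $u^{\rm s}\equiv0$, use the no-Jones-mode assumption to get $\boldsymbol U\equiv0$, and then recover $\phi=\psi=0$. The only cosmetic point is the final step: rather than asserting $\nabla\phi=0$ outright, one should (as the paper does) take the divergence and scalar curl of $\nabla\phi+\boldsymbol{\rm curl}\,\psi=0$ to get $\Delta\phi=\Delta\psi=0$, whence $\phi=\psi=0$ exactly from the Helmholtz equations with $\kappa_{\rm p},\kappa_{\rm s}>0$ (no additive constant remains).
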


\begin{proof}
It suffices to show that $\phi=\psi=u^{\rm s}=0$ when $f_1=f_2=f_3=0$. It follows from straightforward calculations that 
	\begin{align*}
	-\int_{\Gamma_D}u^{\rm s} \overline{\partial_\nu u^{\rm s}}{\rm d}s
	= &\omega^2\rho_{\rm a}\int_{\Gamma_D}(\mu \nu \cdot \partial_\nu\nabla \phi + \mu \nu \cdot \partial_\nu \boldsymbol{\rm curl}~\psi - (\lambda+\mu )\kappa^2_{\rm p}\phi)(\partial_\nu\overline{\phi}+\partial_{\tau}\overline{\psi}){\rm d}s\notag  \\ 
	=&\omega^2\rho_{\rm a}\int_{\Gamma_D}(\mu \partial_\nu(\nabla\phi+\boldsymbol{\rm curl}~\psi)\cdot \nu - (\lambda+\mu )\kappa^2_{\rm p}\phi)\left((\nabla\overline{\phi}+\boldsymbol{\rm curl}~\overline{\psi}) \cdot\nu\right){\rm d}s \nonumber\\
	& + \omega^2\rho_{\rm a}\int_{\Gamma_D}\left(\mu \partial_\nu(\nabla\phi+\boldsymbol{\rm curl}~\psi)\cdot \tau\right)\left((\nabla\overline{\phi}+\boldsymbol{\rm curl}~\overline{\psi}) \cdot\tau\right){\rm d}s\notag \\
	= &\omega^2\rho_{\rm a}\int_{\Gamma_D} \left(\mu \partial_\nu(\nabla\phi+\boldsymbol{\rm curl}~\psi)\cdot \nu\nu+\mu \partial_\nu(\nabla\phi+\boldsymbol{\rm curl}~\psi)\cdot\tau\tau-(\lambda+\mu)\kappa^2_{\rm p}\phi\nu\right )\nonumber \\  
	&\cdot\left((\nabla\overline{\phi}+\boldsymbol{\rm curl}~\overline{\psi})\cdot\nu \nu +(\nabla\overline{\phi}+\boldsymbol{\rm curl}~\overline{\psi})\cdot\tau\tau\right){\rm d}s\notag\\  
	= &\omega^2\rho_{\rm a}\int_{\Gamma_D} \left(\mu \partial_\nu(\nabla\phi+ \boldsymbol{\rm curl}~\psi)-(\lambda+\mu)\kappa^2_{\rm p}\phi\nu\right )\cdot(\nabla\overline{\phi}+ \boldsymbol{\rm curl}~\overline{\psi}){\rm d}s \nonumber\\ 
	= &\omega^2\rho_{\rm a}\int_D(\mu \nabla(\nabla\phi+\boldsymbol{\rm curl}~\psi): \nabla(\nabla\overline{\phi}+\boldsymbol{\rm curl}~\overline{\psi})+(\lambda+\mu)\nabla\cdot(\nabla\phi+\boldsymbol{\rm curl}~\psi) \notag \\  & \nabla\cdot(\nabla\overline{\phi}+\boldsymbol{\rm curl}~\overline{\psi}) -\omega^2\rho_{\rm e}(\nabla\phi+\boldsymbol{\rm curl}~\psi)\cdot (\nabla\overline{\phi}+\boldsymbol{\rm curl}~\overline{\psi})){\rm d}x =  0,
	\end{align*}
	where $A:B={\rm tr}(AB^\top)$ is the Frobenius inner product of square matrices $A$ and
	$B$. The last two identities follow from Green's formula and the Navier equation \eqref{Navier equation}. Taking the imaginary part of the above equation yields 
	\[
	\Im\int_{\Gamma_D}u^{\rm s} \overline{\partial_\nu u^{\rm s}}{\rm d}s =0.
	\]
which gives that $u^{\rm s}=0$ in $\mathbb{R}^2\backslash\overline{D}$ by Rellich's lemma. Using the continuity conditions \eqref{transmission}, we conclude that $\boldsymbol{U}$ is identically zero in $D$ provided that there is no Jones mode in $D$. Hence, 
	\begin{eqnarray*}
		\nabla \phi = -\mathbf{curl}~\psi \quad \mbox{ in } D,
	\end{eqnarray*}
	which implies $\Delta \phi = 0$ and $\Delta \psi = 0 $. The proof is completed by noting that $\Delta \phi=-\kappa^2_{\rm p}\phi=0$ and $\Delta \psi=-\kappa^2_{\rm s}\psi=0$ in $D$.
\end{proof}

It is known that a radiating solution of the Helmholtz equation \eqref{Helmholtz equation} has the asymptotic behaviour of the form
\begin{equation*}
u^{\rm s}(x)=\frac{\mathrm{e}^{\mathrm{i}\kappa_{\rm a}|x|}}{\sqrt{|x|}}\Big\{u_\infty(\hat{x})+\mathcal{O}\left(\frac{1}{|x|}\right)\Big\} \quad\text{as}~|x|\to\infty,
\end{equation*}
uniformly in all directions $\hat{x}:=x/|x|$. The function $u_\infty$, defined on the unit circle $\Omega=\{x\in\mathbb R^2: |x|=1\}$, is known as the far-field pattern of $u^{\rm s}$. Let $B=\left\{x\in \mathbb{R}^2: |x-x_0|\le R \right\}\subset\mathbb{R}^2$ be an artificially added elastic ball centered at $x_0$ such that $D\cap B=\emptyset$. The problem geometry is shown in Figure \ref{fig:illustration}. For brevity, we denote the boundary of $D$ and $B$ by $\Gamma_D$ and $\Gamma_B$, respectively. The phased and phaseless IAEIP can be stated as follows:

\begin{problem}[Phased IAEIP]\label{problem_1}
Given an incident plane wave $u^{\rm inc}$ with a single incident direction $d$ and the corresponding far-field pattern $u_\infty(\hat x), ~\forall\hat x\in\Omega$ due to the unknown obstacle $D$, the inverse problem is to determine the location and shape of the boundary $\Gamma_D$.
\end{problem}

\begin{problem}[Phaseless IAEIP]\label{problem_2}  Given an incident plane wave ${u}^{\rm inc}$ with a single incident direction $d$ and the corresponding phaseless far-field pattern $|u_\infty(\hat x)|, ~\forall\hat x\in\Omega$ due to the scatterer $D\cup B$, the inverse problem is to determine the location and shape of the boundary $\Gamma_D$.
\end{problem}

\section{Boundary integral equations}

In this section, we derive the boundary integral equations for the coupled boundary value problem \eqref{HelmholtzDec} and discuss their well-posedness.

\subsection{Jump relations}

We begin with investigating the jump relations for the surface potentials at the boundary $\Gamma_D$. 

For given vectors $a=(a_1,a_2)^\top\in\mathbb{R}^2$ and $b=(b_1,b_2)^\top\in\mathbb{R}^2$, denote
\[
\langle a,b\rangle=ab^\top, \quad \nabla a=(\nabla a_1, \nabla a_2)^\top,\quad \nabla a^\top=(\nabla a_1, \nabla a_2)=(\nabla a)^\top.
\]
For a given scalar function $f(x,y)$, define
\[
\nabla_y(\nabla_{x}f)=\langle\nabla_{x},\nabla_{y}\rangle f=\nabla_{x}\nabla_{y}^\top f=\left[  
\begin{array}{cc}  
\partial^2_{x_1y_1}f & \partial^2_{x_1y_2}f \\[2pt]
\partial^2_{x_2y_1}f & \partial^2_{x_2y_2}f \\ 
\end{array}
\right]
\]
and
\begin{align*}
(\nabla_{x}\nabla_{y}^\top f,\nu)=\left[  
\begin{array}{cc}  
\partial^2_{x_1y_1}f & \partial^2_{x_1y_2}f \\ [2pt]
\partial^2_{x_2y_1}f & \partial^2_{x_2y_2}f \\ 
\end{array}
\right]
\left[                
\begin{array}{c}  
\nu_1 \\ 
\nu_2 \\ 
\end{array}
\right],\quad 
(\boldsymbol{\rm curl}_{x}\nabla_{y}^\top f,\nu)=\left[  
\begin{array}{cc}  
 \partial^2_{x_2y_1}f  & \partial^2_{x_2y_2}f \\ [2pt]
-\partial^2_{x_1y_1}f  &-\partial^2_{x_1y_2}f \\ 
\end{array}
\right]
\left[                
\begin{array}{c}  
\nu_1 \\ 
\nu_2 \\ 
\end{array}
\right].
\end{align*}
Denote the fundamental solution of the two-dimensional Helmholtz equation by
$$
\Phi(x,y;\kappa)=\frac{\mathrm{i}}{4}H_0^{(1)}(\kappa|x-y|), \quad x\neq y,
$$
where $H_0^{(1)}$ is the Hankel function of the first kind of order zero. The single- and double-layer potentials with density $g$ are defined by
\begin{align*}
\omega(x)=\int_{\Gamma_D}\Phi(x,y;\kappa)g(y)\mathrm{d}s(y),\quad
\chi(x)=\int_{\Gamma_D}\frac{\partial\Phi(x,y;\kappa)}{\partial\nu(y)}g(y)\mathrm{d}s(y), \quad x\in\mathbb{R}^2\setminus\Gamma_D.
\end{align*}
In addition, we define the tangential-layer potential by
$$
\zeta(x)=\int_{\Gamma_D}\frac{\partial\Phi(x,y;\kappa)}{\partial\tau(y)}g(y)\mathrm{d}s(y), \quad x\in\mathbb{R}^2\setminus\Gamma_D.
$$

The jump relations can be found in \cite{DR-book1983} for the single- and double-layer potentials as $x\to\Gamma_D$. It is necessary to study the jump properties for the derivatives of those layer potentials in order to derive the boundary integral equations for the coupled boundary value problem \eqref{HelmholtzDec}.

\begin{lemma}\label{lemma1}
The first derivatives of the single-layer potential $\omega$ with density $g\in C^{0,\alpha}(\Gamma_D)$, $0<\alpha<1$, can be uniformly extended in a H\"{o}lder continuous fashion from $\mathbb{R}^2\setminus\overline{D}$ into $\mathbb{R}^2\setminus{D}$ and from $D$ into $\overline{D}$ with the limiting values
\begin{align}\label{single jump}
(\nabla\omega)_\pm(x)=\int_{\Gamma_D}\nabla_{x}\Phi(x,y;\kappa)g(y)\mathrm{d}s(y)\mp\frac{1}{2}\nu(x)g(x), \quad x\in\Gamma_D,
\end{align}
where 
$$
(\nabla\omega)_{\pm}(x) :=\lim_{h\to0^+}(\nabla\omega)(x\pm h\nu(x)).
$$
Moreover, for the single-layer potential $\omega$ with density $g\in C^{0,\alpha}(\Gamma_D)$, $0<\alpha<1$, we have
\begin{align}\label{curlsingle jump}
({\bf curl}~\omega)_{\pm}(x)= \int_{\Gamma_D}{\bf curl}_x\Phi(x,y;\kappa)g(y)\mathrm{d}s(y)\pm\frac{1}{2}\tau(x)g(x), \quad x\in\Gamma_D.
\end{align}
\end{lemma}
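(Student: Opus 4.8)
The plan is to reduce everything to the classical jump relations for the single- and double-layer potentials of the Helmholtz equation, which are already available in \cite{DR-book1983}. The key observation is that both $\nabla\omega$ and $\mathbf{curl}\,\omega$ can be rewritten, component by component, in terms of the gradient of the single-layer potential, and the normal (resp. tangential) component of that gradient is precisely governed by the jump relation for the double-layer (resp. tangential-layer) potential. First I would recall that for $g\in C^{0,\alpha}(\Gamma_D)$ the single-layer potential $\omega$ is $C^{1,\alpha}$ up to $\Gamma_D$ from either side, so the one-sided limits $(\nabla\omega)_\pm$ exist and are H\"older continuous on $\Gamma_D$; this is the regularity statement underlying \eqref{single jump}. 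The content is therefore entirely in identifying the jump, i.e. the $\mp\frac12\nu g$ term.

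For \eqref{single jump}, the standard route is to decompose $\nabla\omega$ on $\Gamma_D$ into its normal and tangential parts. The normal derivative $\partial_\nu\omega = \nu\cdot\nabla\omega$ obeys the well-known jump relation
\[
(\partial_\nu\omega)_\pm(x)=\int_{\Gamma_D}\frac{\partial\Phi(x,y;\kappa)}{\partial\nu(x)}g(y)\,\mathrm{d}s(y)\mp\frac12 g(x),\quad x\in\Gamma_D,
\]
while the tangential derivative $\partial_\tau\omega=\tau\cdot\nabla\omega$ extends continuously across $\Gamma_D$ with no jump, because differentiation along the curve commutes with the (weakly singular) integral and the resulting kernel is still integrable. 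Writing $\nabla\omega=(\nu\cdot\nabla\omega)\nu+(\tau\cdot\nabla\omega)\tau$ on the boundary and adding the two contributions, the continuous parts assemble into $\int_{\Gamma_D}\nabla_x\Phi(x,y;\kappa)g(y)\,\mathrm{d}s(y)$ (here one uses $\partial\Phi/\partial\nu(x)=\nu(x)\cdot\nabla_x\Phi$), and the jump contributes only through the normal part, giving exactly $\mp\frac12\nu(x)g(x)$. This yields \eqref{single jump}.

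For \eqref{curlsingle jump}, I would simply apply the identity $\mathbf{curl}\,\omega=(\partial_{x_2}\omega,-\partial_{x_1}\omega)^\top$, which is obtained from $\nabla\omega$ by the rotation $(a_1,a_2)\mapsto(a_2,-a_1)$. Under this rotation the outward normal $\nu=(\nu_1,\nu_2)^\top$ is sent to $(\nu_2,-\nu_1)^\top=\nu_\perp=-\tau$. Applying the rotation to \eqref{single jump} term by term, the integral part becomes $\int_{\Gamma_D}\mathbf{curl}_x\Phi(x,y;\kappa)g(y)\,\mathrm{d}s(y)$ and the jump term $\mp\frac12\nu(x)g(x)$ becomes $\mp\frac12(-\tau(x))g(x)=\pm\frac12\tau(x)g(x)$, which is precisely \eqref{curlsingle jump}. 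Thus \eqref{curlsingle jump} is a formal consequence of \eqref{single jump} together with the definition of $\mathbf{curl}$ and the relation $\nu_\perp=-\tau$.

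The main obstacle is the rigorous justification of the two ingredients in the proof of \eqref{single jump}: (i) that the tangential derivative of $\omega$ genuinely extends in a H\"older-continuous manner across $\Gamma_D$ with no jump, which requires interchanging the tangential derivative with the integral and checking that the resulting principal-value integral is well defined and its one-sided limits coincide; and (ii) the precise form and sign of the jump in the normal derivative, which is the classical double-layer jump relation but must be invoked with the correct orientation convention for $\nu$ and for the $\pm$ limits $\lim_{h\to0^+}(\cdot)(x\pm h\nu(x))$. Both are standard for $\mathcal C^2$ curves and $C^{0,\alpha}$ densities and can be cited from \cite{DR-book1983}; once they are in place, the assembly of normal and tangential parts and the subsequent rotation argument for \eqref{curlsingle jump} are purely algebraic.
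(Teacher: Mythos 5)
Your treatment of \eqref{curlsingle jump} is exactly the paper's: both obtain it from \eqref{single jump} via ${\bf curl}\,\omega=(\nabla\omega)_\perp$ and $\nu_\perp=-\tau$. For \eqref{single jump} you take a slightly different route: you decompose $\nabla\omega$ into normal and tangential components at the field point $x$, invoking the classical jump of $\partial_\nu\omega$ and the continuity of $\partial_\tau\omega$ across $\Gamma_D$, whereas the paper uses \eqref{symmetry relation}--\eqref{vector identity} to decompose $\nabla_y\Phi$ at the integration point $y$ into a (vector) double-layer kernel and a tangential-derivative kernel, and then follows the proof of Theorem 2.17 in \cite{DR-book1983}. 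Both decompositions are legitimate and lead to the same formula, and your sign bookkeeping (both for $\mp\tfrac12\nu g$ and for the rotation giving $\pm\tfrac12\tau g$) is correct.

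The weak point is your justification of the tangential part. The kernel $\partial\Phi(x,y;\kappa)/\partial\tau(x)$ is \emph{not} weakly singular on $\Gamma_D$: since $\tau(x)\cdot(x-y)\sim\pm|x-y|$ on a $\mathcal{C}^2$ curve, it behaves like $1/|x-y|$ with an odd (Cauchy-type) singularity, so "differentiation commutes with the integral and the resulting kernel is still integrable" is false as stated --- the tangential component of the integral in \eqref{single jump} exists only as a principal value (this is precisely why the operator $H_\kappa$ is discretized in Section 5 with the $1/\sin(\varsigma-t)$ quadrature). Moreover, the two facts you need --- that $(\partial_\tau\omega)_\pm$ coincide and equal that principal-value integral, together with the H\"older-continuous extendability of $\nabla\omega$ from each side --- are exactly the nontrivial content of Theorem 2.17 in \cite{DR-book1983}; they are not separate elementary lemmas you can combine to re-derive it. So either you cite that theorem (in which case your argument collapses into the paper's proof, with the Helmholtz kernel handled as the paper does via \eqref{symmetry relation}--\eqref{vector identity}), or you must supply the missing analysis yourself, e.g.\ by splitting off $g(x)$ and treating the constant-density part explicitly, or by an integration-by-parts/Plemelj-type argument for the tangential piece. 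You do flag this correctly as the main obstacle in your closing paragraph, but as written that step is a genuine gap rather than a routine interchange of limits.
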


\begin{proof}
Noting 
\begin{align} \label{symmetry relation}
\nabla_{x}\Phi(x,y;\kappa)=-\nabla_{y}\Phi(x,y;\kappa)
\end{align}
and 
\begin{align} \label{vector identity}
\nabla_{y}\Phi(x,y;\kappa)=\nu(y)\frac{\partial\Phi(x,y;\kappa)}{\partial\nu(y)}+\tau(y)\frac{\partial\Phi(x,y;\kappa)}{\partial\tau(y)},
\end{align}
we may similarly show \eqref{single jump} by following the proof of Theorem 2.17 in \cite{DR-book1983}. It is clear to note \eqref{curlsingle jump} 
by combining the fact that ${\bf curl}~\omega=(\nabla\omega)_\perp$, $\nu_\perp=-\tau$ and the jump relation \eqref{single jump}.
\end{proof}

\begin{lemma}\label{lemma2}
The first derivatives of the double-layer potential $\chi$ with density $g\in C^{1,\alpha}(\Gamma_D)$, $0<\alpha<1$, can be uniformly extended in a H\"{o}lder continuous fashion from $\mathbb{R}^2\setminus\overline{D}$ into $\mathbb{R}^2\setminus{D}$ and from $D$ into $\overline{D}$ with the limiting values
\begin{align}\label{double jump}
(\nabla\chi)_\pm(x)=\kappa^2\int_{\Gamma_D}\Phi(x,y;\kappa)\nu(y)g(y)\mathrm{d}s(y)+\int_{\Gamma_D}{\bf curl}_x\Phi(x,y;\kappa)\frac{\partial g}{\partial\tau}(y)\mathrm{d}s(y)
\nonumber \\
\pm\frac{1}{2}\tau(x)\frac{\partial g}{\partial\tau}(x), \quad x\in\Gamma_D.
\end{align}
\end{lemma}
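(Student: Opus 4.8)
The plan is to rewrite $\nabla\chi$, for $x$ away from $\Gamma_D$, as a vector single-layer potential plus the $\mathbf{curl}$ of a single-layer potential, and then to read off the one-sided boundary limits from Lemma \ref{lemma1}. Since for $x\in\mathbb{R}^2\setminus\Gamma_D$ the kernel $\partial\Phi(x,y;\kappa)/\partial\nu(y)$ is smooth in $y$ on $\Gamma_D$, I would first differentiate under the integral sign,
\begin{equation*}
\nabla_x\chi(x)=\int_{\Gamma_D}\nabla_x\Big(\frac{\partial\Phi(x,y;\kappa)}{\partial\nu(y)}\Big)g(y)\,\mathrm{d}s(y)=\int_{\Gamma_D}\big(\nabla_x\nabla_y^\top\Phi(x,y;\kappa)\big)\nu(y)\,g(y)\,\mathrm{d}s(y).
\end{equation*}

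The decisive step is the pointwise kernel identity, valid for $x\neq y$,
\begin{equation*}
\big(\nabla_x\nabla_y^\top\Phi(x,y;\kappa)\big)\nu(y)=\kappa^2\,\Phi(x,y;\kappa)\,\nu(y)-\partial_{\tau(y)}\big({\bf curl}_x\Phi(x,y;\kappa)\big),
\end{equation*}
which I would verify componentwise. By \eqref{symmetry relation}, $\nabla_x\Phi=-\nabla_y\Phi$, so $\nabla_x\nabla_y^\top\Phi$ equals the negative Hessian of $\Phi$ in $y$, and likewise ${\bf curl}_x\Phi=(-\partial_{y_2}\Phi,\partial_{y_1}\Phi)^\top$; expanding $\partial_{\tau(y)}$ with $\tau=(-\nu_2,\nu_1)^\top$ and cancelling, the remaining terms add up to $-\nu(y)\,\Delta_y\Phi=\kappa^2\,\Phi(x,y;\kappa)\,\nu(y)$ by the Helmholtz equation $\Delta_y\Phi=-\kappa^2\Phi$.

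Substituting this identity into the integral and integrating by parts along the closed $\mathcal{C}^2$ curve $\Gamma_D$ — which is legitimate because $g\in C^{1,\alpha}(\Gamma_D)$ gives $\partial g/\partial\tau\in C^{0,\alpha}(\Gamma_D)$, and $y\mapsto{\bf curl}_x\Phi(x,y;\kappa)$ is smooth on $\Gamma_D$ for $x\notin\Gamma_D$, with no endpoint contribution on a closed curve — transfers the tangential derivative from the kernel onto the density and yields, for every $x\in\mathbb{R}^2\setminus\Gamma_D$,
\begin{equation*}
\nabla_x\chi(x)=\kappa^2\int_{\Gamma_D}\Phi(x,y;\kappa)\,\nu(y)\,g(y)\,\mathrm{d}s(y)+\int_{\Gamma_D}{\bf curl}_x\Phi(x,y;\kappa)\,\frac{\partial g}{\partial\tau}(y)\,\mathrm{d}s(y).
\end{equation*}
Letting $x\to\Gamma_D$ from either side then finishes the proof: the first integral is a vector single-layer potential with the $C^{0,\alpha}$ density $\nu g$, hence continuous across $\Gamma_D$ and contributing no jump, while the second is $\mathbf{curl}$ of the single-layer potential with density $\partial g/\partial\tau\in C^{0,\alpha}(\Gamma_D)$, so Lemma \ref{lemma1}, in the form \eqref{curlsingle jump}, gives its one-sided limits as the (Cauchy principal value) integral over $\Gamma_D$ plus $\pm\tfrac12\tau(x)\,(\partial g/\partial\tau)(x)$. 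Adding the two contributions produces \eqref{double jump} and shows simultaneously that $\nabla\chi$ extends in a H\"older-continuous fashion from each side.

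I expect the main obstacle to be the kernel identity in the second step, namely recognizing that, modulo the benign term $\kappa^2\Phi\,\nu$ contributed by the Helmholtz equation, the second-order kernel $(\nabla_x\nabla_y^\top\Phi)\nu$ is an exact tangential $y$-derivative of the first-order kernel ${\bf curl}_x\Phi$. Everything afterwards is routine: the $\nabla_x\nabla_y^\top$ kernel is too strongly singular for a direct passage to the boundary limit, and it is precisely the integration by parts — which lowers the order of the singularity at the cost of one derivative on $g$, explaining the $C^{1,\alpha}$ hypothesis — that reduces the jump computation to the single-layer relations of Lemma \ref{lemma1}; it only remains to note that the principal-value integral appearing in \eqref{double jump} is exactly the one occurring in \eqref{curlsingle jump} with density $\partial g/\partial\tau$.
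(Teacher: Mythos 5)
Your proposal is correct and follows essentially the same route as the paper: the paper's (sketched) proof also rewrites $\nabla\chi$ via the identity $\nabla\nabla\cdot b=\Delta b+\mathbf{curl}\,\mathrm{curl}\,b$ together with $-\Delta\Phi=\kappa^2\Phi$ applied to the vector single-layer potential $b(x)=\int_{\Gamma_D}\Phi(x,y;\kappa)\nu(y)g(y)\,\mathrm{d}s(y)$, integrates the tangential derivative by parts onto $g$, and then invokes the jump relation \eqref{curlsingle jump}, exactly as you do; your componentwise verification of the Maue-type kernel identity is just the pointwise form of that vector identity. The explicit check and the integration-by-parts justification you give are sound, so the argument is complete.
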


\begin{proof}
Using the jump relation \eqref{curlsingle jump} and the identities 
$$
\nabla\nabla\cdot b=\Delta b+{\bf curl}~{\rm curl}~b
$$ 
and
\[
-\Delta\Phi(x,y;\kappa)=\kappa^2\Phi(x,y;\kappa),\quad x\neq y,
\]
we may easily show \eqref{double jump} by following the proof of Theorem 7.32 in \cite{Kress-book2014} and Theorem 2.23 in \cite{DR-book1983}. 
\end{proof}

\begin{theorem}\label{Th3}
For the tangential-layer potential $\zeta$ with density $g\in C^{1,\alpha}(\Gamma_D)$, $0<\alpha<1$, we have
\begin{align}\label{double-type jump}
(\nabla\zeta)_\pm(x)=-\int_{\Gamma_D}\nabla_x\Phi(x,y;\kappa)\frac{\partial g}{\partial\tau}(y)\mathrm{d}s(y)
\pm\frac{1}{2}\nu(x)\frac{\partial g}{\partial\tau}(x), \quad x\in\Gamma_D.
\end{align}
\end{theorem}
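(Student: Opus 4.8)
The plan is to remove the tangential derivative from the kernel by integrating by parts along the closed curve $\Gamma_D$, thereby identifying $\zeta$ with an ordinary single-layer potential, and then to invoke Lemma~\ref{lemma1}. Fix $x\in\mathbb{R}^2\setminus\Gamma_D$. Since $y\mapsto\Phi(x,y;\kappa)$ is smooth on $\Gamma_D$ and $g\in C^{1,\alpha}(\Gamma_D)$, the product $\Phi(x,\cdot;\kappa)\,g$ is continuously differentiable along $\Gamma_D$; because $\Gamma_D$ is a closed curve without endpoints, the integral of its tangential derivative vanishes, and hence
\[
\int_{\Gamma_D}\frac{\partial\Phi(x,y;\kappa)}{\partial\tau(y)}g(y)\,\mathrm{d}s(y)=-\int_{\Gamma_D}\Phi(x,y;\kappa)\frac{\partial g}{\partial\tau}(y)\,\mathrm{d}s(y).
\]
That is, $\zeta$ coincides with the single-layer potential $\omega$ carrying the density $-\partial g/\partial\tau$.

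Next I would note that $\partial g/\partial\tau\in C^{0,\alpha}(\Gamma_D)$ whenever $g\in C^{1,\alpha}(\Gamma_D)$, so this rewritten density meets exactly the regularity hypothesis of Lemma~\ref{lemma1}. Consequently the one-sided limits $(\nabla\zeta)_\pm$ exist and extend in a H\"older continuous fashion from each side, inherited from the corresponding statement for $\nabla\omega$, and applying the jump relation \eqref{single jump} with $g$ replaced by $-\partial g/\partial\tau$ gives
\[
(\nabla\zeta)_\pm(x)=-\int_{\Gamma_D}\nabla_x\Phi(x,y;\kappa)\frac{\partial g}{\partial\tau}(y)\,\mathrm{d}s(y)\pm\frac{1}{2}\nu(x)\frac{\partial g}{\partial\tau}(x),\qquad x\in\Gamma_D,
\]
where the change of sign in the jump term is produced by the minus sign in the density. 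This is precisely \eqref{double-type jump}.

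The only step requiring care is the legitimacy of the tangential integration by parts, i.e.\ the vanishing of $\int_{\Gamma_D}\frac{\partial}{\partial\tau}\big(\Phi(x,\cdot;\kappa)\,g\big)\,\mathrm{d}s$. I would justify it by introducing an arc-length parametrization $s\mapsto y(s)$, $s\in[0,L]$, of the $\mathcal{C}^2$ curve $\Gamma_D$, for which $\frac{\partial}{\partial\tau}F(y(s))=\frac{\mathrm{d}}{\mathrm{d}s}F(y(s))$; the integral then equals $\int_0^L\frac{\mathrm{d}}{\mathrm{d}s}\big[\Phi(x,y(s);\kappa)\,g(y(s))\big]\,\mathrm{d}s=0$ by the $L$-periodicity of the integrand. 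Because $x\notin\Gamma_D$ is held fixed throughout, no singularity of the kernel is encountered in this reduction; the singular boundary behaviour enters only afterwards, through Lemma~\ref{lemma1}. Thus the substantive analytic content has already been established in Lemma~\ref{lemma1}, and the remaining work is the bookkeeping of the integration by parts and the sign.
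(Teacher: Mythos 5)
Your argument is correct and is essentially the paper's own proof: both transfer the tangential derivative to the density by integration by parts along the closed curve $\Gamma_D$, identify $\zeta$ with a single-layer potential of density $-\partial g/\partial\tau\in C^{0,\alpha}(\Gamma_D)$, and then apply the jump relation \eqref{single jump} of Lemma \ref{lemma1}, with the sign flip in the density yielding the $\pm\frac{1}{2}\nu\,\partial_\tau g$ term. Your explicit justification of the integration by parts via arc-length periodicity and the regularity check are fine additions but do not change the route.
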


\begin{proof}
Using the integration by parts, we have
\begin{align*}
(\nabla\zeta)(x)&=\nabla\int_{\Gamma_D}\frac{\partial\Phi(x,y;\kappa)}{\partial\tau(y)}g(y)\mathrm{d}s(y)=\nabla\int_{\Gamma_D}\tau(y)\cdot\nabla_y\Phi(x,y;\kappa)g(y)\mathrm{d}s(y)\\
&=-\nabla\int_{\Gamma_D}\Phi(x,y;\kappa)\frac{\partial g}{\partial\tau}(y)\mathrm{d}s(y),
\end{align*}		
which implies \eqref{double-type jump} by noting the jump relation \eqref{single jump}.
\end{proof}

\begin{theorem}\label{Th4}
The second derivatives of the single-layer potential $\omega$ with density $g\in C^{1,\alpha}(\Gamma_D)$, $0<\alpha<1$,  can be uniformly extended in a H\"{o}lder continuous fashion from $\mathbb{R}^2\setminus\overline{D}$ into $\mathbb{R}^2\setminus{D}$ and from $D$ into $\overline{D}$ with the limiting values
\begin{align}\label{derder-single1}
&(\nabla\nabla^\top\omega)_{\pm}(x)=-\kappa^2\int_{\Gamma_D}\Phi(x,y;\kappa)\big\langle\nu(y),\nu(y)\big\rangle g(y)\mathrm{d}s(y) -\int_{\Gamma_D}\big\langle\frac{\partial(g\nu)}{\partial\tau}(y),{\bf curl}_x\Phi(x,y;\kappa)\big\rangle\mathrm{d}s(y) \nonumber\\
&+\int_{\Gamma_D}\big\langle\frac{\partial(g\tau)}{\partial\tau}(y),\nabla_x\Phi(x,y;\kappa)\big\rangle\mathrm{d}s(y)\mp\frac{1}{2}\big\langle\frac{\partial(g\nu)}{\partial\tau}(x),\tau(x)\big\rangle\mp\frac{1}{2}\big\langle\frac{\partial(g\tau)}{\partial\tau}(x),\nu(x)\big\rangle, \quad x\in\Gamma_D.
\end{align}
and
\begin{align}\label{derder-single2}
&({\bf curl}\nabla^\top\omega)_{\pm}(x) =\kappa^2\int_{\Gamma_D}\Phi(x,y;\kappa)\big\langle\tau(y),\nu(y)\big\rangle g(y)\mathrm{d}s(y)+\int_{\Gamma_D}\big\langle\frac{\partial(g\tau)}{\partial\tau}(y),{\bf curl}_x\Phi(x,y;\kappa)\big\rangle\mathrm{d}s(y) \nonumber\\
&+\int_{\Gamma_D}\big\langle\frac{\partial(g\nu)}{\partial\tau}(y),\nabla_x\Phi(x,y;\kappa)\big\rangle\mathrm{d}s(y)\pm\frac{1}{2}\big\langle\frac{\partial(g\tau)}{\partial\tau}(x),\tau(x)\big\rangle\mp\frac{1}{2}\big\langle\frac{\partial(g\nu)}{\partial\tau}(x),\nu(x)\big\rangle, \quad x\in\Gamma_D.
\end{align}
\end{theorem}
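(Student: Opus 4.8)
To prove Theorem~\ref{Th4}, the plan is to reduce the second derivatives of the single-layer potential to objects whose jump behaviour has already been established, namely the first derivatives of a double-layer potential (Lemma~\ref{lemma2}) and of a tangential-layer potential (Theorem~\ref{Th3}). The first step is to rewrite $\nabla\omega$ away from the boundary: for $x\in\mathbb{R}^2\setminus\Gamma_D$ one may differentiate under the integral sign, and the symmetry relation \eqref{symmetry relation} followed by the decomposition \eqref{vector identity} yields
\begin{align*}
\nabla\omega(x)=-\int_{\Gamma_D}\nu(y)\frac{\partial\Phi(x,y;\kappa)}{\partial\nu(y)}g(y)\mathrm{d}s(y)-\int_{\Gamma_D}\tau(y)\frac{\partial\Phi(x,y;\kappa)}{\partial\tau(y)}g(y)\mathrm{d}s(y),
\end{align*}
i.e.\ $\nabla\omega=-\boldsymbol{\chi}-\boldsymbol{\zeta}$, where $\boldsymbol{\chi}$ is the vector-valued double-layer potential with density $g\nu$ and $\boldsymbol{\zeta}$ the vector-valued tangential-layer potential with density $g\tau$. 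Since $g\in C^{1,\alpha}(\Gamma_D)$ and $\nu,\tau$ carry the smoothness of $\Gamma_D$, the densities $g\nu$ and $g\tau$ are admissible for Lemma~\ref{lemma2} and Theorem~\ref{Th3}.

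Second, I would differentiate once more and pass to $\Gamma_D$ componentwise. Writing $(\nabla\omega)_j=-\chi_{g\nu_j}-\zeta_{g\tau_j}$, with $\chi_{g\nu_j}$ and $\zeta_{g\tau_j}$ the scalar double- and tangential-layer potentials of densities $g\nu_j$ and $g\tau_j$, the one-sided limits of $\nabla(\nabla\omega)_j$ follow by substituting the jump relation \eqref{double jump} into $\nabla\chi_{g\nu_j}$ and \eqref{double-type jump} into $\nabla\zeta_{g\tau_j}$; as in the cited results these limits are attained in a H\"older continuous manner, so $(\nabla\nabla^\top\omega)_\pm$ inherits the same property. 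Assembling the $(i,j)$ entries $-(\nabla\chi_{g\nu_j})_{\pm,i}-(\nabla\zeta_{g\tau_j})_{\pm,i}$ into the outer-product notation $\langle\cdot,\cdot\rangle$ and recalling ${\bf curl}_x\Phi=(\nabla_x\Phi)_\perp$ should reproduce \eqref{derder-single1}.

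Third, since $\nabla^\top\omega=(\partial_{x_1}\omega,\partial_{x_2}\omega)$ gives ${\bf curl}\nabla^\top\omega=(\nabla\nabla^\top\omega)_\perp$, with $\perp$ acting on the row index, formula \eqref{derder-single2} is obtained directly from \eqref{derder-single1} by invoking $\nu_\perp=-\tau$, $\tau_\perp=\nu$, $(\nabla_x\Phi)_\perp={\bf curl}_x\Phi$ and $({\bf curl}_x\Phi)_\perp=-\nabla_x\Phi$; equivalently one may rerun the argument starting from the identity ${\bf curl}\,\omega=(\nabla\omega)_\perp$ of Lemma~\ref{lemma1}.

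I expect the principal obstacle to be bookkeeping rather than anything conceptual: one must carefully track the sign introduced by $\nabla_x\Phi=-\nabla_y\Phi$, the sign changes produced by the $\perp$-operation, and, above all, the correct redistribution of the $\pm\tfrac12$ jump contributions --- which enter once through the $\pm\tfrac12\tau\,\partial_\tau g$ term of Lemma~\ref{lemma2} and once through the $\pm\tfrac12\nu\,\partial_\tau g$ term of Theorem~\ref{Th3} --- so that they combine into exactly the four local boundary terms displayed in \eqref{derder-single1}. A lesser point is that $g\nu,g\tau\in C^{1,\alpha}(\Gamma_D)$ needs the unit normal and tangent to be $C^{1,\alpha}$, which is marginally stronger than the stated $\mathcal{C}^2$ regularity of $\Gamma_D$; one therefore either strengthens the assumption to $\Gamma_D\in C^{2,\alpha}$ in this lemma or closes the gap by a routine approximation argument.
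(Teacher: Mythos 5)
Your proposal is correct and follows essentially the same route as the paper: rewrite $\nabla\omega$ via \eqref{symmetry relation}--\eqref{vector identity} as minus a (vector) double-layer potential with density $g\nu$ plus minus a tangential-layer potential with density $g\tau$, differentiate once more, and invoke the jump relations \eqref{double jump} and \eqref{double-type jump}; the curl formula is likewise obtained in the paper by the same $\perp$-rotation argument you describe. Your closing remark on the regularity of $g\nu,g\tau$ is a fair minor technical caveat, but it does not alter the argument.
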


\begin{proof} 
Using \eqref{symmetry relation}--\eqref{vector identity}, we have from taking the second derivatives of the single-layer potential that
\begin{align*}
(\nabla\nabla^\top\omega)(x)&=\int_{\Gamma_D}\nabla_{x}(\nabla_{x}\Phi(x,y;\kappa))g(y)\mathrm{d}s(y)=-\nabla_x\int_{\Gamma_D}\nabla_{y}\Phi(x,y;\kappa)g(y)\mathrm{d}s(y)\\
&=-\nabla_x\int_{\Gamma_D}\nu(y)\frac{\partial\Phi(x,y;\kappa)}{\partial\nu(y)}g(y)\mathrm{d}s(y)-\nabla_x\int_{\Gamma_D}\tau(y)\frac{\partial\Phi(x,y;\kappa)}{\partial\tau(y)}g(y)\mathrm{d}s(y).
\end{align*}
Combining the above equation and the jump relations \eqref{double jump}--\eqref{double-type jump} gives \eqref{derder-single1}.

Analogously, noting ${\bf curl}~\omega=(\nabla\omega)_\perp$, $\nu_\perp=-\tau$ and $\tau_\perp=\nu$, we have
\begin{align*}
({\bf curl}\nabla^\top\omega)(x)
&=\int_{\Gamma_D}\nabla_{x}(\nabla_{x}\Phi(x,y;\kappa))_\perp g(y)\mathrm{d}s(y)\nonumber\\
&=\nabla_x\int_{\Gamma_D}\tau(y)\frac{\partial\Phi(x,y;\kappa)}{\partial\nu(y)}g(y)\mathrm{d}s(y)-\nabla_x\int_{\Gamma_D}\nu(y)\frac{\partial\Phi(x,y;\kappa)}{\partial\tau(y)}g(y)\mathrm{d}s(y).
\end{align*}
Applying the jump relation \eqref{double jump}--\eqref{double-type jump} again yields \eqref{derder-single2}.
\end{proof}

In view of $\partial_\nu(\nabla\omega)=(\nabla\nabla^\top\omega,\nu)$, $\partial_\nu({\bf curl}~\omega) =({\bf curl}\nabla^\top\omega,\nu)$ and Theorem \ref{Th4}, we have the following result.

\begin{corollary}\label{corollary5}
For the single-layer potential $\omega$ with density $g\in C^{1,\alpha}(\Gamma_D)$, $0<\alpha<1$, we have on $\Gamma_D$ that 
\begin{align*}
\frac{\partial(\nabla\omega)_{\pm}}{\partial\nu}(x)=-\kappa^2\int_{\Gamma_D}\Phi(x,y;\kappa)&\langle\nu(y),\nu(y)\rangle\nu(x)g(y)\mathrm{d}s(y) -\int_{\Gamma_D}\frac{\partial\Phi(x,y;\kappa)}{\partial\tau(x)}\frac{\partial(g\nu)}{\partial\tau}(y)\mathrm{d}s(y) \nonumber\\
&+\int_{\Gamma_D}\frac{\partial\Phi(x,y;\kappa)}{\partial\nu(x)}\frac{\partial(g\tau)}{\partial\tau}(y)\mathrm{d}s(y)\mp\frac{1}{2}\frac{\partial(g\tau)}{\partial\tau}(x)
\end{align*}
and
\begin{align*}
\frac{\partial({\bf curl}~\omega)_{\pm}} {\partial\nu}(x)=\kappa^2\int_{\Gamma_D}\Phi(x,y;\kappa)&\big\langle\tau(y),\nu(y)\big\rangle\nu(x) g(y)\mathrm{d}s(y)+\int_{\Gamma_D}\frac{\partial\Phi(x,y;\kappa)}{\partial\tau(x)}\frac{\partial(g\tau)}{\partial\tau}(y)\mathrm{d}s(y) \nonumber\\
&+\int_{\Gamma_D}\frac{\partial\Phi(x,y;\kappa)}{\partial\nu(x)}\frac{\partial(g\nu)}{\partial\tau}(y)\mathrm{d}s(y)\mp\frac{1}{2}\frac{\partial(g\nu)}{\partial\tau}(x),
\end{align*}
where
$$
\frac{\partial(\nabla\omega)_\pm}{\partial\nu}(x)=\lim_{h\to0^+}\frac{\partial(\nabla\omega)(x\pm h\nu(x))}{\partial\nu(x)}.
$$
\end{corollary}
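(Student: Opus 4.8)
The plan is to obtain the corollary as a direct consequence of Theorem \ref{Th4} by contracting the matrix-valued jump relations \eqref{derder-single1} and \eqref{derder-single2} against the normal vector $\nu(x)$ from the right. First I would record the elementary identities $\partial_\nu(\nabla\omega) = (\nabla\nabla^\top\omega)\,\nu$ and $\partial_\nu(\mathbf{curl}~\omega) = (\mathbf{curl}\nabla^\top\omega)\,\nu$, which hold for each side of $\Gamma_D$ and commute with the limiting processes because, by Theorem \ref{Th4}, the second derivatives extend H\"older-continuously up to the boundary from either side; thus the one-sided normal derivatives in the statement are well-defined and equal the corresponding contractions of $(\nabla\nabla^\top\omega)_\pm$ and $(\mathbf{curl}\nabla^\top\omega)_\pm$.

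Next I would apply $\langle\,\cdot\,,\nu(x)\rangle$ to each term on the right-hand side of \eqref{derder-single1}. For the first (weakly singular) integral, $\langle\nu(y),\nu(y)\rangle\nu(x)$ appears after noting $\langle a,b\rangle\nu(x) = a\,(b\cdot\nu(x))$, which reproduces the first term in the corollary. For the two principal-value integrals I would use the dual pairing $\big\langle c(y),\nabla_x\Phi\big\rangle\,\nu(x) = c(y)\,(\nu(x)\cdot\nabla_x\Phi) = c(y)\,\partial\Phi/\partial\nu(x)$ and likewise $\big\langle c(y),\mathbf{curl}_x\Phi\big\rangle\,\nu(x) = c(y)\,(\nu(x)\cdot\mathbf{curl}_x\Phi) = c(y)\,\partial\Phi/\partial\tau(x)$, the last equality because $\mathbf{curl}_x\Phi = (\nabla_x\Phi)_\perp$ and $\nu_\perp = -\tau$ combine to give $\nu(x)\cdot\mathbf{curl}_x\Phi = -\tau(x)\cdot\nabla_x\Phi \cdot(-1) = \partial\Phi/\partial\tau(x)$ after matching signs carefully. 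Finally, the two boundary (jump) terms $\mp\tfrac12\langle\partial_\tau(g\nu),\tau\rangle$ and $\mp\tfrac12\langle\partial_\tau(g\tau),\nu\rangle$ contract with $\nu(x)$ to $\mp\tfrac12\,\partial_\tau(g\nu)(x)\,(\tau(x)\cdot\nu(x)) = 0$ and $\mp\tfrac12\,\partial_\tau(g\tau)(x)$, respectively, using $\tau\cdot\nu = 0$ and $\nu\cdot\nu = 1$; this collapses the two $\mp\tfrac12$ terms of \eqref{derder-single1} into the single term $\mp\tfrac12\,\partial_\tau(g\tau)(x)$ in the corollary. The identity for $\partial({\bf curl}~\omega)_\pm/\partial\nu$ is obtained in exactly the same way from \eqref{derder-single2}, with the roles of $\nu$ and $\tau$ in the jump terms interchanged so that the surviving boundary contribution is $\mp\tfrac12\,\partial_\tau(g\nu)(x)$.

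The only point requiring genuine care — and the one I would write out explicitly — is the bookkeeping of signs when contracting $\mathbf{curl}_x\Phi$ and the perpendicular operator against $\nu(x)$, since $(\,\cdot\,)_\perp$ introduces a sign and the convention $\nu_\perp = -\tau$, $\tau_\perp = \nu$ must be tracked consistently through both \eqref{derder-single1} and \eqref{derder-single2}; everything else is a mechanical substitution. No new analytic input is needed beyond the H\"older extension already granted by Theorem \ref{Th4}, so the corollary follows immediately once these algebraic contractions are verified.
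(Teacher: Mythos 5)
Your proposal is correct and is essentially the paper's own argument: the corollary is obtained there by contracting the jump relations \eqref{derder-single1}--\eqref{derder-single2} of Theorem~\ref{Th4} with $\nu(x)$, using $\partial_\nu(\nabla\omega)=(\nabla\nabla^\top\omega,\nu)$, $\partial_\nu(\mathbf{curl}~\omega)=(\mathbf{curl}\nabla^\top\omega,\nu)$, the identities $\langle a,b\rangle\nu=a\,(b\cdot\nu)$, $\tau\cdot\nu=0$, $\nu\cdot\nu=1$, and $\nu(x)\cdot\mathbf{curl}_x\Phi=\tau(x)\cdot\nabla_x\Phi=\partial\Phi/\partial\tau(x)$, with the H\"older extension from Theorem~\ref{Th4} justifying passing to the boundary limits. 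The only blemish is the garbled intermediate expression in your sign check for $\nu(x)\cdot\mathbf{curl}_x\Phi$, but the stated conclusion is correct, so nothing essential is missing.
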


\subsection{Boundary integral equations}

We introduce the single-layer integral operator and the corresponding far-field integral operator
\begin{align*}
S_\kappa[g](x)&=2\int_{\Gamma_D} \Phi(x,y;\kappa)g(y)\mathrm{d}s(y), \quad x\in\Gamma_D,\\
S_\kappa^\infty [g](\hat{x})&=\gamma_\kappa\int_{\Gamma_D}\mathrm{e}^{-\mathrm{i}\kappa \hat{x}\cdot y}g(y)\mathrm{d}s(y), \quad\hat{x}\in\Omega,
\end{align*}
the normal derivative integral operator
$$
K_\kappa[g](x)=2\int_{\Gamma_D} \frac{\partial\Phi(x,y;\kappa)} {\partial\nu(x)}g(y)\mathrm{d}s(y), \quad x\in\Gamma_D,
$$
and the tangential derivative integral operator
$$
H_\kappa[g](x)=2\int_{\Gamma_D}\frac{\partial\Phi(x,y;\kappa)} {\partial\tau(x)}g(y)\mathrm{d}s(y), \quad x\in\Gamma_D.
$$

Let the solution of \eqref{HelmholtzDec} be given in the form of single-layer potentials, i.e.,
\begin{align}
\phi(x)&=\int_{\Gamma_D}\Phi(x,y;\kappa_{\rm p})g_1(y)\mathrm{d}s(y), \quad
x\in D, \label{singlelayer1}\\
\psi(x)&=\int_{\Gamma_D}\Phi(x,y;\kappa_{\rm s})g_2(y)\mathrm{d}s(y), \quad
x\in D, \label{singlelayer2}\\
u^{\rm s}(x)&=\int_{\Gamma_D}\Phi(x,y;\kappa_{\rm a})g_3(y)\mathrm{d}s(y), \quad x\in\mathbb{R}^2\setminus\overline{D}, \label{singlelayer3}
\end{align} 
where the densities $g_1\in C^{1,\alpha}(\Gamma_D)$, $g_2\in C^{1,\alpha}(\Gamma_D)$, and $g_3\in C^{1,\alpha}(\Gamma_D)$.

Letting $x\in{D}$ tend to boundary $\Gamma_D$ in \eqref{singlelayer1}--\eqref{singlelayer2} and $x\in\mathbb{R}^2\setminus\overline{D}$ tend to boundary $\Gamma_D$ in \eqref{singlelayer3}, using the jump relations of the single-layer potentials, Lemmas \ref{lemma1}--\ref{lemma2}, Corollary \ref{corollary5}, and the boundary conditions of \eqref{HelmholtzDec}, we obtain on $\Gamma_D$ that 
\begin{align}
\begin{split}\label{boundaryIE}
2f_1(x)=&-\mu\kappa^2_{\rm p}\nu^\top S_{\kappa_{\rm p}}\big[\langle\nu,\nu\rangle g_1\big]\nu +\mu\nu^\top K_{\kappa_{\rm p}}\big[\tau\partial_\tau g_1+g_1\partial_\tau\tau\big]-\mu\nu^\top H_{\kappa_{\rm p}}\big[\nu\partial_\tau g_1+g_1\partial_\tau\nu\big] 
\\
&+\mu\kappa^2_{\rm s}\nu^\top S_{\kappa_{\rm s}}\big[\langle\tau,\nu\rangle g_2\big]\nu +\mu\nu^\top K_{\kappa_{\rm s}}\big[\nu\partial_\tau g_2+g_2\partial_\tau\nu\big]+\mu\nu^\top H_{\kappa_{\rm s}}\big[\tau\partial_\tau g_2+g_2\partial_\tau\tau\big] \\
&-(\lambda+\mu)\kappa_{\rm p}^2 S_{\kappa_{\rm p}}[g_1]+S_{\kappa_{\rm a}}[g_3]
+\mu(\nu\cdot\partial_\tau\tau)g_1+\mu(\nu\cdot\partial_\tau\nu)g_2+\mu\partial_\tau g_2,  
\\ 
2f_2(x)=&-\kappa^2_{\rm p}\tau^\top S_{\kappa_{\rm p}}\big[\langle\nu,\nu\rangle g_1\big]\nu+\tau^\top K_{\kappa_{\rm p}}\big[\tau\partial_\tau g_1+g_1\partial_\tau\tau\big]-\tau^\top H_{\kappa_{\rm p}}\big[\nu\partial_\tau g_1+g_1\partial_\tau\nu\big] 
\\
&+\kappa^2_{\rm s}\tau^\top S_{\kappa_{\rm s}}\big[\langle\tau,\nu\rangle g_2\big]\nu+\tau^\top K_{\kappa_{\rm s}}\big[\nu\partial_\tau g_2+g_2\partial_\tau\nu\big]+\tau^\top H_{\kappa_{\rm s}}\big[\tau\partial_\tau g_2+g_2\partial_\tau\tau\big] 
\\
&+(\tau\cdot\partial_\tau\tau)g_1+\partial_\tau g_1 +(\tau\cdot\partial_\tau\nu)g_2,  
\\
2f_3(x)=&K_{\kappa_{\rm p}}[g_1]+H_{\kappa_{\rm s}}[g_2]-K_{\kappa_{\rm a}}[g_3]/(\omega^2\rho_a)+g_1+g_3/(\omega^2\rho_{\rm a}).
\end{split}
\end{align}
We point out that $\nu$ and $\tau$ inside of $[\cdot]$ or $\langle\cdot\rangle$ are with respect to the variable $y$; otherwise $\nu$ and $\tau$ are taken with respect to the variable $x$. For brevity, we shall adopt the same notations in the rest of the paper but they should be clear from the context. The far-field pattern is 
\begin{align} \label{singlelayer_far}
u_\infty(\hat{x})=\gamma_{\rm a}\int_{\Gamma_D}e^{-\mathrm{i}\kappa_{\rm a}\hat{x}\cdot y} g_3(y){\rm d}s(y),  \quad\hat{x}\in\Omega, 
\end{align}
where $\gamma_{\rm a}= e^{\mathrm{i}\pi/4}/{\sqrt{8\kappa_{\rm a}\pi}}$.

Now we discuss the uniqueness and existence of the solution for the integral equations \eqref{boundaryIE}.

\begin{theorem}\label{uniquethmBIE}
There exists at most one solution to the boundary integral equations \eqref{boundaryIE} if $\kappa_{\rm a}$ is not the eigenvalue of the interior Dirichlet problem of the Helmholtz equation in $D$.
\end{theorem}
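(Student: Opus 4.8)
The plan is to argue by contradiction in the standard way for boundary integral formulations of transmission problems: suppose $(g_1,g_2,g_3)$ is a solution of the homogeneous system \eqref{boundaryIE} (i.e. with $f_1=f_2=f_3=0$), and show that all three densities must vanish. First I would feed the densities back into the single-layer representations \eqref{singlelayer1}--\eqref{singlelayer3} to define $\phi,\psi$ in $D$ and $u^{\rm s}$ in $\mathbb{R}^2\setminus\overline{D}$. By construction these satisfy the respective Helmholtz equations and the Sommerfeld radiation condition, and — because the boundary integral equations \eqref{boundaryIE} encode exactly the transmission conditions after applying the jump relations of Lemmas \ref{lemma1}--\ref{lemma2} and Corollary \ref{corollary5} — the triple $(\phi,\psi,u^{\rm s})$ is a solution of the homogeneous coupled boundary value problem \eqref{HelmholtzDec}. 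By the uniqueness Theorem \ref{uniquethm}, it follows that $\phi=\psi\equiv 0$ in $D$ and $u^{\rm s}\equiv 0$ in $\mathbb{R}^2\setminus\overline{D}$.

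The remaining work is to deduce $g_1=g_2=g_3=0$ from the vanishing of the potentials. For $g_3$: since $u^{\rm s}\equiv 0$ in the exterior, its exterior normal derivative and trace vanish on $\Gamma_D$; the jump relations for the single-layer potential and its normal derivative give that the interior single-layer potential $u^{\rm s}_-$ solves the interior Dirichlet problem for $\Delta + \kappa_{\rm a}^2$ in $D$ with zero boundary data. This is where the hypothesis enters: since $\kappa_{\rm a}^2$ is not a Dirichlet eigenvalue of $-\Delta$ in $D$, we get $u^{\rm s}_-\equiv 0$ in $D$ as well, and then the jump of the normal derivative across $\Gamma_D$ yields $g_3 = \partial_\nu u^{\rm s}_- - \partial_\nu u^{\rm s}_+ = 0$. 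For $g_1$ and $g_2$: the potentials $\phi,\psi$ vanish in $D$, so their interior Cauchy data on $\Gamma_D$ vanish; by the single-layer jump relations \eqref{single jump} and \eqref{curlsingle jump} (or equivalently the exterior/interior normal-derivative jump), the exterior extensions of $\phi$ and $\psi$ are radiating solutions of the Helmholtz equations with wavenumbers $\kappa_{\rm p},\kappa_{\rm s}$ in $\mathbb{R}^2\setminus\overline{D}$ having zero Dirichlet data on $\Gamma_D$; by uniqueness of the exterior Dirichlet problem (the homogeneous exterior problem has only the trivial solution for any positive wavenumber) they vanish in the exterior too, and again the normal-derivative jump forces $g_1=g_2=0$.

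The main obstacle I anticipate is the bookkeeping in the first step — verifying rigorously that the homogeneous system \eqref{boundaryIE} is genuinely equivalent to the homogeneous version of \eqref{HelmholtzDec}. One must check that the particular combinations of $S_\kappa,K_\kappa,H_\kappa$ and the tangential-derivative terms appearing on the right-hand sides of \eqref{boundaryIE} are exactly what the traction operator $T$ and the normal/tangential projections of the transmission conditions produce once the jump relations of Theorem \ref{Th4} and Corollary \ref{corollary5} are inserted, with correct signs for the $\pm$ limiting values (interior traces for $\phi,\psi$, exterior traces for $u^{\rm s}$). This is the same identification that was used to \emph{derive} \eqref{boundaryIE}, so once we note that the derivation is reversible — every step (taking the boundary limit, applying the jump relations) is an equivalence, not merely an implication — the equivalence of the homogeneous problems is immediate. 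A secondary, minor point is the regularity needed to invoke Theorem \ref{uniquethm}: the single-layer potentials with $C^{1,\alpha}$ densities are $C^2$ up to the boundary on each side, which is more than enough for the classical solution framework of \eqref{HelmholtzDec}. With the equivalence in hand, the argument reduces to the three short uniqueness statements above, and the only place the spectral hypothesis on $\kappa_{\rm a}$ is used is in killing the interior single-layer potential associated with $u^{\rm s}$.
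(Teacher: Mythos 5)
Your proposal is correct and follows essentially the same route as the paper's proof: represent $\phi,\psi,u^{\rm s}$ by the single-layer potentials, invoke Theorem \ref{uniquethm} to kill the interior traces of $\phi,\psi$ and the exterior trace of $u^{\rm s}$, then use continuity of the single-layer potential together with exterior uniqueness (for $\phi,\psi$) and the non-eigenvalue hypothesis on $\kappa_{\rm a}$ (for the interior extension of $u^{\rm s}$) before reading off $g_1=g_2=g_3=0$ from the normal-derivative jumps. The paper performs exactly this argument, introducing $\phi_i,\phi_e$ etc.\ and the jump identities \eqref{equ_phi_psi}--\eqref{equ_du}, so no further comparison is needed.
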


\begin{proof}
It suffices to show that $g_1=g_2=g_3=0$ if $f_1=f_2=f_3=0$ for equations in \eqref{boundaryIE}. For $x\in\mathbb{R}^2\setminus\Gamma_D$, we define single-layer potentials
	\begin{align*}
	\phi(x)&=\int_{\Gamma_D}\Phi(x,y;\kappa_{\rm p})g_1(y)\mathrm{d}s(y),\\
	\psi(x)&=\int_{\Gamma_D}\Phi(x,y;\kappa_{\rm s})g_2(y)\mathrm{d}s(y), \\
	u^{\rm s}(x)&=\int_{\Gamma_D}\Phi(x,y;\kappa_{\rm a})g_3(y)\mathrm{d}s(y).
	\end{align*} 
Let
	\begin{eqnarray*}
	\phi(x)=\begin{cases}
	\phi_i, &  x\in D\\
	\phi_e, &  x\in \mathbb{R}^2\backslash \overline{D}
	\end{cases},\quad 
	\psi(x)=\begin{cases}
	\psi_i, &  x\in D\\
	\psi_e, &  x\in \mathbb{R}^2\backslash \overline{D}
	\end{cases},
	\end{eqnarray*}
	and 
	\begin{eqnarray*}
	u^{\rm s}(x)=\begin{cases}
	u^{\rm s}_i, &  x\in D\\
	u^{\rm s}_e, &  x\in \mathbb{R}^2\backslash \overline{D}
	\end{cases}.
	\end{eqnarray*}
	Since $\phi_i$, $\psi_i$ and $u^{\rm s}_e$ satisfy the boundary value problem \eqref{HelmholtzDec}, they are identically zero by Theorem \ref{uniquethm}. Using the jump condition of single layer potentials, we have on $\partial D$ that 
	\begin{eqnarray}
	&\phi_e-\phi_i=0, \quad \psi_e-\psi_i=0, \label{equ_phi_psi}\\
	&\partial_\nu \phi_e - \partial_\nu \phi_i = -g_1, \quad \partial_\nu \psi_e - \partial_\nu \psi_i = -g_2. \label{equ_dphi_dpsi}
	\end{eqnarray}
	Combining \eqref{equ_phi_psi} and the fact $\phi_i=\psi_i=0$ in $D$, we derive that $\phi_e$ and $\psi_e$ satisfy the zero boundary condition on $\partial D$. By the uniqueness of the exterior problem for the Helmholtz equation, it holds that $\phi_e=\psi_e=0$ in $\mathbb{R}^2\backslash \overline{D}$. We conclude that $g_1=g_2=0$ by \eqref{equ_dphi_dpsi}. Similarly, we have on $\partial D$ that    
	\begin{eqnarray}
	&u^{\rm s}_e-u^{\rm s}_i=0, \label{equ_u}\\
	&\partial_\nu u^{\rm s}_e - \partial_\nu u^{\rm s}_i = -g_3. \label{equ_du}
	\end{eqnarray}
	By \eqref{equ_u}, we see that $u^{\rm s}_i$ satisfies the zero Dirichlet boundary condition. Since $\kappa_{\rm a}$ is not the eigenvalue of the interior Dirichlet problem, we conclude that $u^{\rm s}_i$ is identically zero in $D$, which implies $g_3=0$ by \eqref{equ_du}.
\end{proof}

\begin{theorem}\label{existence}
There exists a unique solution to the boundary integral equations \eqref{boundaryIE} if none of $\kappa_{\rm p}$, $\kappa_{\rm s}$ and $\kappa_{\rm a}$ is the eigenvalue of the interior Dirichlet problem of the Helmholtz equation in $D$.
\end{theorem}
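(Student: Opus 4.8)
The plan is to combine the uniqueness already established in Theorem~\ref{uniquethmBIE} with a surjectivity argument built on the unique solvability of the direct problem \eqref{HelmholtzDec} and on the bijectivity of the scalar single-layer operators under the stated non-eigenvalue hypotheses. A Riesz--Fredholm argument applied directly to the coupled system \eqref{boundaryIE} seems awkward here, because the tangential-derivative operators $H_{\kappa_{\rm p}}$ and $H_{\kappa_{\rm s}}$ occurring in \eqref{boundaryIE} are singular rather than compact, so the system is not of the form identity-plus-compact; extracting a fixed invertible principal operator and checking that it has index zero is possible but delicate. I would instead proceed as follows.

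First, invoke the well-posedness of the acoustic--elastic interaction problem (cf.\ \cite{LukeMartin1995, YHX2017}): under the no-Jones-mode assumption there is a unique pair $(\boldsymbol U,u)$, and through the Helmholtz decomposition of Section~2 this furnishes a unique triple $(\phi,\psi,u^{\rm s})$ solving \eqref{HelmholtzDec}, with Cauchy data on $\Gamma_D$ of the appropriate Hölder regularity. Second, recall the classical fact (see \cite{DR-book1983, Kress-book2014}) that for $\Gamma_D$ of class $C^2$ the single-layer boundary operator $S_\kappa$ is Fredholm of index zero between the relevant Hölder spaces, and is injective --- hence bijective --- precisely when $\kappa$ is not a Dirichlet eigenvalue of the Helmholtz equation in $D$; injectivity is the step where the hypothesis enters, since $S_\kappa g=0$ forces the single-layer potential of $g$ to vanish on $\Gamma_D$, hence in $\mathbb{R}^2\setminus\overline D$ by exterior Dirichlet uniqueness, hence in $D$ by interior Dirichlet uniqueness (using the non-eigenvalue property), so that $g$, being the jump of the normal derivative, is zero. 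Applying this with $\kappa=\kappa_{\rm p}$ and $\kappa=\kappa_{\rm s}$ produces densities $g_1,g_2\in C^{1,\alpha}(\Gamma_D)$ whose single-layer potentials \eqref{singlelayer1}--\eqref{singlelayer2} share the Dirichlet traces of $\phi$ and $\psi$ on $\Gamma_D$; since $\kappa_{\rm p},\kappa_{\rm s}$ are not Dirichlet eigenvalues, interior Dirichlet uniqueness forces these potentials to coincide with $\phi$ and $\psi$ throughout $D$. Applying it with $\kappa=\kappa_{\rm a}$ produces $g_3\in C^{1,\alpha}(\Gamma_D)$ whose single-layer potential \eqref{singlelayer3} is a radiating solution with the same Dirichlet trace as $u^{\rm s}$, so it equals $u^{\rm s}$ in $\mathbb{R}^2\setminus\overline D$ by exterior Dirichlet uniqueness.

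Third, since the three single-layer potentials now reproduce $(\phi,\psi,u^{\rm s})$ exactly, and the latter satisfy the transmission/boundary conditions of \eqref{HelmholtzDec}, substituting these potentials into those boundary conditions and passing to the boundary limits via Lemmas~\ref{lemma1}--\ref{lemma2}, Theorem~\ref{Th3} and Corollary~\ref{corollary5} --- which is exactly the computation that produced \eqref{boundaryIE} --- shows that $(g_1,g_2,g_3)$ solves the coupled boundary integral equations \eqref{boundaryIE}. Combined with Theorem~\ref{uniquethmBIE}, this gives the asserted existence and uniqueness.

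The main obstacle I anticipate is regularity bookkeeping rather than any conceptual difficulty: to land the densities in $C^{1,\alpha}(\Gamma_D)$ one needs the Dirichlet traces of $\phi,\psi,u^{\rm s}$ to be one Hölder derivative smoother than a bare $C^2$ boundary automatically provides, so one either slightly upgrades the boundary regularity or uses interior elliptic regularity near $\Gamma_D$ together with the smoothness of $f_1,f_2,f_3$ coming from a plane incident wave; and one must keep the ``bijective iff not a Dirichlet eigenvalue'' statement for $S_\kappa$ set in Hölder spaces consistent with the $C^{1,\alpha}$ framework of Section~3. One should also record explicitly that the three scalar reductions are driven by the traces of one and the same solution of \eqref{HelmholtzDec}, so that the reconstructed potentials satisfy the coupled boundary conditions simultaneously.
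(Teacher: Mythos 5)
Your proposal is correct and shares the paper's overall architecture: both arguments obtain existence by starting from the unique solvability of the direct problem \eqref{Navier equation}--\eqref{radiation}, constructing the densities from traces of that solution, and then noting that, by construction, the corresponding single-layer potentials satisfy the boundary conditions of \eqref{HelmholtzDec}, so the densities satisfy \eqref{boundaryIE}; uniqueness is Theorem~\ref{uniquethmBIE} in both cases, and your recipe for $g_3$ (invert $S_{\kappa_{\rm a}}$ on the trace of $u^{\rm s}$) is literally the paper's. The genuine difference is in how $(g_1,g_2)$ are produced. The paper matches the Neumann-type data carried by the trace of $\boldsymbol U$, namely $\partial_\nu\phi+\partial_\tau\psi=\boldsymbol U\cdot\nu$ and $\partial_\tau\phi-\partial_\nu\psi=\boldsymbol U\cdot\tau$, which after the jump relations becomes the coupled $2\times 2$ system \eqref{single_coupled}; its solvability under the non-eigenvalue hypotheses on $\kappa_{\rm p},\kappa_{\rm s}$ is not proved in the paper but imported from \cite{LaiLi}. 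You instead match the Dirichlet traces of $\phi$ and $\psi$ separately, using the classical bijectivity of the scalar single-layer operator $S_\kappa$ in H\"older spaces when $\kappa$ is not an interior Dirichlet eigenvalue (cf.\ \cite{DR-book1983}), and then identify the potentials with $\phi$, $\psi$ in $D$ by interior Dirichlet uniqueness (here one should note, as the end of the proof of Theorem~\ref{uniquethm} shows, that the pair $(\phi,\psi)$ in the Helmholtz decomposition subject to the two Helmholtz equations is itself unique, so the traces you invert are well defined). Your route is more self-contained --- no appeal to \cite{LaiLi}, the two constructions decouple, and the final verification is transparent because the potentials coincide with $(\phi,\psi,u^{\rm s})$ as functions, so the boundary conditions transfer verbatim through Lemmas~\ref{lemma1}--\ref{lemma2}, Theorem~\ref{Th3} and Corollary~\ref{corollary5} --- whereas the paper's route works directly with the physical trace $\boldsymbol U|_{\Gamma_D}$ and reuses machinery already developed in \cite{LaiLi}. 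The regularity bookkeeping you flag (densities must lie in $C^{1,\alpha}(\Gamma_D)$ for the second-derivative jump relations of Theorem~\ref{Th4}, which asks for boundary traces slightly smoother than a bare $C^2$ boundary guarantees) is a real caveat, but the paper's own proof is no more careful on this point, so it does not place your argument below the paper's level of rigor.
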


\begin{proof}
Since the original coupled equations \eqref{Navier equation}--\eqref{radiation} admit a unique solution $(u,\boldsymbol{U})$, by the Helmholtz decomposition $\boldsymbol{U} = \nabla\phi+\mathbf{curl}~\psi$, we have on $\partial D$ that 
\[
	\partial_\nu \phi +\partial_\tau \psi  = \boldsymbol{U}\cdot \nu,\quad 
	\partial_\tau \psi -\partial_\nu \psi  = \boldsymbol{U}\cdot \tau. 
\]
Plugging the single layer representations \eqref{singlelayer1}--\eqref{singlelayer2} for $\phi$ and $\psi$ into the above equations and using the jump property of boundary integral operators, we have 
	\begin{eqnarray}\label{single_coupled}
	\begin{cases}
	(I+K_{\kappa_{\rm p}})g_1+H_{\kappa_{\rm s}} g_2 = 2\boldsymbol{U}\cdot \nu \\
	H_{\kappa_{\rm p}}g_1-(I+K_{\kappa_{\rm s}}) g_2 = 2\boldsymbol{U}\cdot \tau
	\end{cases}
	\end{eqnarray}
	where $I$ is the identity operator. 
	Following the idea in \cite{LaiLi}, we can show that the boundary integral equation \eqref{single_coupled} admits a solution $(g_1,g_2)$ when neither of $\kappa_{\rm p}$ or $\kappa_{\rm s}$ is the eigenvalue of the interior Dirichlet problem of the Helmholtz equation in $D$. For $g_3$, since $\kappa_{\rm a}$ is not an interior Dirichlet eigenvalue either, the single layer operator $S_{\kappa_{\rm a}}$ is invertible. Therefore
	\begin{align*}
	g_3 = S_{\kappa_{\rm a}}^{-1}u^s.
	\end{align*}
	Based on the construction, one can easily see that $g_1$, $g_2$ and $g_3$ satisfy the boundary integral equations \eqref{boundaryIE}. 
\end{proof}

\section{translation invariance and a uniqueness result}

In this section, we prove the translation invariance of the phaseless far-field pattern and present a uniqueness result for the phaseless IAEIP.
 
\begin{theorem}\label{transinvarance}
Under the assumption of Theorem \ref{existence}, let $u_\infty$ be the far field pattern of the scattered waves $u^{\rm s}$ with the incident plane wave ${u}^{\rm inc}(x)=e^{{\rm i} \kappa_{\rm a}d\cdot x}$. For the shifted domain $D_h:=\{x+h: x\in D\}$ with a fixed vector $h\in\mathbb{R}^2$, the far-field pattern $u_\infty^h$ satisfies
\begin{align*}
u_\infty^h(\hat{x})=e^{\mathrm{i}\kappa_{\rm a}(d-\hat{x})\cdot h}u_\infty(\hat{x}).
\end{align*}	 
\end{theorem}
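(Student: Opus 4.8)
The plan is to exploit the translation relation between the scattering problems on $D$ and on $D_h$, and then transfer it to the far-field pattern via the integral representation \eqref{singlelayer3}, \eqref{singlelayer_far}. First I would set up the change of variables: if $(\boldsymbol U, u^{\rm s})$ solves the coupled problem \eqref{HelmholtzDec} for the obstacle $D$ with incident wave $u^{\rm inc}(x) = e^{\mathrm{i}\kappa_{\rm a} d\cdot x}$, I claim that the functions $\boldsymbol U^h(x) := e^{\mathrm{i}\kappa_{\rm a} d\cdot h}\,\boldsymbol U(x-h)$ and $u^{\rm s,h}(x) := e^{\mathrm{i}\kappa_{\rm a} d\cdot h}\,u^{\rm s}(x-h)$ solve the corresponding problem for the shifted obstacle $D_h$ with the \emph{same} incident wave $u^{\rm inc}$. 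The key observations are that the Navier equation, the Helmholtz equation and the transmission conditions are all translation invariant (the coefficients $\mu,\lambda,\rho_{\rm e},\rho_{\rm a},\kappa_{\rm a}$ are constants, and $\nu,\tau$ on $\Gamma_{D_h}$ are just the translates of those on $\Gamma_D$), while $u^{\rm inc}(x) = e^{\mathrm{i}\kappa_{\rm a} d\cdot x} = e^{\mathrm{i}\kappa_{\rm a} d\cdot h} e^{\mathrm{i}\kappa_{\rm a} d\cdot(x-h)} = e^{\mathrm{i}\kappa_{\rm a} d\cdot h}\, u^{\rm inc}(x-h)$; the scalar prefactor $e^{\mathrm{i}\kappa_{\rm a} d\cdot h}$ is exactly what makes the boundary data on $\Gamma_{D_h}$ match. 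The Sommerfeld radiation condition is clearly preserved under translation plus multiplication by a constant. By the uniqueness (Theorem \ref{uniquethm}, in the equivalent Helmholtz-decomposed form, together with the existence from Theorem \ref{existence}), $(\boldsymbol U^h, u^{\rm s,h})$ is \emph{the} solution for $D_h$, so $u^{\rm s}_h(x) = e^{\mathrm{i}\kappa_{\rm a} d\cdot h}\, u^{\rm s}(x-h)$.

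Next I would read off the far-field pattern. Using the asymptotic expansion of $u^{\rm s}_h$, or more directly the single-layer representation: writing $u^{\rm s}_h$ over $\Gamma_{D_h}$ with density $g_3^h$, the substitution $y = z + h$, $z\in\Gamma_D$, in the far-field formula \eqref{singlelayer_far} produces the factor $e^{-\mathrm{i}\kappa_{\rm a}\hat x\cdot h}$. Alternatively, and more cleanly, one invokes the standard fact that if $v(x) = w(x-h)$ for a radiating solution $w$ with far-field $w_\infty$, then $v$ is radiating with $v_\infty(\hat x) = e^{-\mathrm{i}\kappa_{\rm a}\hat x\cdot h} w_\infty(\hat x)$: this follows from $|x-h| = |x| - \hat x\cdot h + O(1/|x|)$ and $\widehat{x-h} = \hat x + O(1/|x|)$ substituted into the asymptotic form $w(x-h) = \frac{e^{\mathrm{i}\kappa_{\rm a}|x-h|}}{\sqrt{|x-h|}}\{w_\infty(\widehat{x-h}) + O(1/|x-h|)\}$. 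Combining this with the constant multiplier gives
\begin{align*}
u_\infty^h(\hat x) = e^{\mathrm{i}\kappa_{\rm a} d\cdot h}\, e^{-\mathrm{i}\kappa_{\rm a}\hat x\cdot h}\, u_\infty(\hat x) = e^{\mathrm{i}\kappa_{\rm a}(d-\hat x)\cdot h}\, u_\infty(\hat x),
\end{align*}
which is the claim.

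I do not expect a serious obstacle here; the proof is essentially bookkeeping. The one point that needs a little care is the verification that the translated fields satisfy the \emph{transmission conditions} on $\Gamma_{D_h}$ with the correct scalar factor — one must check that the traction operator $T$ and the normal/tangential traces commute with translation, which they do since $T$ involves only $\mu,\lambda,\nu$ and derivatives, and $\nu$ at $x+h\in\Gamma_{D_h}$ equals $\nu$ at $x\in\Gamma_D$. A secondary bookkeeping point is to make sure the invocation of uniqueness is legitimate: Theorem \ref{uniquethm} is stated for the Helmholtz-decomposed system, so strictly one should either pass through the Helmholtz decomposition of $\boldsymbol U^h$ (which is just the translate-and-scale of the decomposition of $\boldsymbol U$, since $\nabla$ and $\boldsymbol{\rm curl}$ commute with translation) or note that the original coupled system has a unique $u^{\rm s}$ as recalled after \eqref{radiation}; either route closes the argument.
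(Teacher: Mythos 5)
Your proposal is correct, but it takes a genuinely different route from the paper. You argue at the level of the PDE solution: you verify that $\boldsymbol U^h(x)=e^{\mathrm{i}\kappa_{\rm a}d\cdot h}\boldsymbol U(x-h)$, $u^{{\rm s},h}(x)=e^{\mathrm{i}\kappa_{\rm a}d\cdot h}u^{\rm s}(x-h)$ satisfy \eqref{Navier equation}--\eqref{radiation} for $D_h$ with the same incident wave (the total field picks up the same scalar factor, since $u^{\rm inc}(x)=e^{\mathrm{i}\kappa_{\rm a}d\cdot h}u^{\rm inc}(x-h)$ on $\Gamma_{D_h}$), invoke uniqueness of the scattering solution, and then use the standard asymptotics $|x-h|=|x|-\hat x\cdot h+O(1/|x|)$ to get the factor $e^{-\mathrm{i}\kappa_{\rm a}\hat x\cdot h}$ in the far field. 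The paper instead works entirely inside its boundary-integral framework: it claims the translated, phase-scaled densities $g_j^h(x)=e^{\mathrm{i}\kappa_{\rm a}d\cdot h}g_j(x-h)$ solve the system \eqref{boundaryIE} on $\Gamma_{D_h}$, checks this by substituting into the equations and using the translation invariance of the kernels $\Phi$, $\partial_\nu\Phi$, $\partial_\tau\Phi$, appeals to the uniqueness of the BIE solution (Theorems \ref{uniquethmBIE} and \ref{existence}), and then reads off $u_\infty^h$ from the representation \eqref{singlelayer_far} by the change of variables $y\mapsto y-h$. Your route is shorter and avoids the kernel-by-kernel verification; it also only needs uniqueness of the scattering problem itself (no Jones mode), not the interior-Dirichlet-eigenvalue hypotheses of Theorem \ref{existence}, which is precisely the relaxation the paper points to in its remark following the theorem. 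What the paper's route buys is consistency with its density-based formulation: the relation \eqref{density_invar} between $g_3^h$ and $g_3$ is itself used as the vehicle for the far-field identity, at the cost of requiring the BIE well-posedness assumptions. The two bookkeeping points you flag (commutation of $T$ and the traces with translation, and passing the uniqueness argument either through the Helmholtz decomposition or through the original coupled system) are exactly the right ones, and your resolution of each is sound.
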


\begin{proof}
We assume that the densities $g_1^h, g_2^h$ and $g_3^h$ solve the boundary integral equations \eqref{boundaryIE} with $\Gamma_D$ replaced by $\Gamma_{D_h}$. We claim that if $g_1, g_2$ and $g_3$ solve the equations \eqref{boundaryIE}, then
\begin{equation}  \label{density_invar}
g_1^h(x)=e^{\mathrm{i}\kappa_{\rm a}d\cdot h}g_1(x-h), \quad
g_2^h(x)=e^{\mathrm{i}\kappa_{\rm a}d\cdot h}g_2(x-h), \quad
g_3^h(x)=e^{\mathrm{i}\kappa_{\rm a}d\cdot h}g_3(x-h).
\end{equation} 
In fact, by substituting the above equations into the right hand side of 
\eqref{boundaryIE} with $\Gamma_D$ replaced by $\Gamma_{D_h}$ and noting that
\[
\frac{\partial\Phi(x,y;\kappa)}{\partial\nu(x)}=\frac{\partial\Phi(\tilde{x},\tilde{y};\kappa)}{\partial\nu(\tilde{x})}, \qquad \frac{\partial\Phi(x,y;\kappa)}{\partial\tau(x)}=\frac{\partial\Phi(\tilde{x},\tilde{y};\kappa)}{\partial\tau(\tilde{x})},
\]
where $\tilde{x}=x-h$, $\tilde{y}=y-h$, the first equation in \eqref{boundaryIE} becomes
\begin{align*}
&-\mu\kappa^2_{\rm p}S^h_{\kappa_{\rm p}}\big[(\nu(x)\cdot\nu(y))^2g_1^h(y)\big] +\mu K^h_{\kappa_{\rm p}}\big[\nu(x)\cdot\tau(y)\partial_\tau g_1^h(y) +g_1^h(y)\nu(x)\cdot\partial_\tau\tau(y)\big]
\nonumber\\
&-\mu H^h_{\kappa_{\rm p}}\big[\nu(x)\cdot\nu(y)\partial_\tau g_1^h(y) +g_1^h(y)\nu(x)\cdot\partial_\tau\nu(y)\big]+\mu\kappa^2_{\rm s}S^h_{\kappa_{\rm s}}[\nu(x)\cdot\tau(y)\nu(y)\cdot\nu(x)g_2^h(y)]
\nonumber\\
&+\mu K^h_{\kappa_{\rm s}}\big[\nu(x)\cdot\nu(y)\partial_\tau g_2^h(y) +g_2^h(y)\nu(x)\cdot\partial_\tau\nu(y)\big]-(\lambda+\mu)\kappa_{\rm p}^2 S^h_{\kappa_{\rm p}}\big[g_1^h(y)\big]
\nonumber\\
&+\mu H^h_{\kappa_{\rm s}}\big[\nu(x)\cdot\tau(y)\partial_\tau g_2^h(y) +g_2^h(y)\nu(x)\cdot\partial_\tau\tau(y)\big] 
+S^h_{\kappa_{\rm a}}\big[g_3^h(y)\big]
\nonumber\\
&+\mu g_1^h(x)\nu(x)\cdot\partial_\tau\tau(x) +\mu g_2^h(x)\nu(x)\cdot\partial_\tau\nu(x)+\mu\partial_\tau g_2^h(x) 
\nonumber\\
=&e^{\mathrm{i}\kappa_{\rm a}d\cdot h} \big(-\mu\kappa^2_{\rm p}S_{\kappa_{\rm p}}\big[(\nu(\tilde{x})\cdot\nu(\tilde{y}))^2g_1(\tilde{y})\big]+\mu K_{\kappa_{\rm p}}\big[\nu(\tilde{x})\cdot\tau(\tilde{y})\partial_\tau g_1(\tilde{y}) +g_1(\tilde{y})\nu(\tilde{x})\cdot\partial_\tau\tau(\tilde{y})\big]
\nonumber\\
&-\mu H_{\kappa_{\rm p}}\big[\nu(\tilde{x})\cdot\nu(\tilde{y})\partial_\tau g_1(\tilde{y})+g_1(\tilde{y})\nu(\tilde{x})\cdot\partial_\tau\nu(\tilde{y})\big]+\mu\kappa^2_{\rm s}S_{\kappa_{\rm s}}[\nu(\tilde{x})\cdot\tau(\tilde{y})\nu(\tilde{y})\cdot\nu(\tilde{x})g_2(\tilde{y})]
\nonumber\\
&+\mu K_{\kappa_{\rm s}}\big[\nu(\tilde{x})\cdot\nu(\tilde{y})\partial_\tau g_2(\tilde{y})+g_2(\tilde{y})\nu(\tilde{x})\cdot\partial_\tau\nu(\tilde{y})\big]-(\lambda+\mu)\kappa_{\rm p}^2 S_{\kappa_{\rm p}}\big[g_1(\tilde{y})\big]
\nonumber\\
&+\mu H_{\kappa_{\rm s}}\big[\nu(\tilde{x})\cdot\tau(\tilde{y})\partial_\tau g_2(\tilde{y})+g_2(\tilde{y})\nu(\tilde{x})\cdot\partial_\tau\tau(\tilde{y})\big] 
+S_{\kappa_{\rm a}}\big[g_3(\tilde{y})\big]
\nonumber\\
&+\mu g_1(\tilde{x})\nu(\tilde{x}) \cdot\partial_\tau\tau(\tilde{x}) +\mu g_2(\tilde{x})\nu(\tilde{x})\cdot\partial_\tau\nu(\tilde{x})+\mu\partial_\tau g_2(\tilde{x})\big)
\nonumber\\
=&e^{\mathrm{i}\kappa_{\rm a}d\cdot h}  \Big(2u^{\rm inc}(\tilde{x})\Big)=2f_1(x),\quad x\in\partial D_h,
\end{align*}
where the boundary integral operators defined on $\Gamma_{D_h}$ are denoted by $S^h, K^h$ and $H^h$. Similarly, the second and third equations of \eqref{boundaryIE} can be verified in the same way. Thus, \eqref{density_invar} follows from the fact that the system of boundary integral equations \eqref{boundaryIE} for $D_h$ has a unique solution (cf. Theorems \ref{uniquethmBIE} and \ref{existence}).

Combining \eqref{singlelayer_far} and \eqref{density_invar}, we obtain 
\begin{align*}
u_\infty^h(\hat{x})&=\gamma_{\rm a}\int_{\Gamma_{D_h}}e^{-\mathrm{i}\kappa_{\rm a} \hat{x}\cdot y}g_3^h(y)\mathrm{d}s(y) \\
&=\gamma_{\rm a}\int_{\Gamma_{D_h}} e^{-\mathrm{i}\kappa_{\rm a} \hat{x}\cdot(y-h)} e^{-\mathrm{i}\kappa_{\rm a}\hat{x}\cdot h}e^{\mathrm{i}\kappa_{\rm a} d\cdot h} g_3(y-h)\mathrm{d}s(y) \\
&=e^{\mathrm{i}\kappa_{\rm a} (d-\hat{x})\cdot h}u_\infty(\hat{x}),
\end{align*}
which completes the proof. 
\end{proof}

\begin{remark}
The assumption on the wavenumber $\kappa_{\rm p}$, $\kappa_{\rm s}$ and $\kappa_{\rm a}$ in Theorem \ref{transinvarance} can be removed by using combined boundary layer potentials to represent the solution. We refer to \cite{LaiLi} for a related discussion. 
\end{remark}

Theorem \ref{transinvarance} implies that the location of the obstacle can not be uniquely recovered by the modules of far-field pattern when the plane wave is used as an incident field. To overcome this difficulty, motivated by \cite{ZG18}, we may introduce an elastic reference ball $B=B(x_0,R)=\{x\in\mathbb{R}^2: |x-x_0|<R\}$ to the scattering system in order to break the translation invariance.

Assume that $P$ is a disk (with positive radius) such that $P\subset\mathbb{R}^2\setminus(\overline{D}\cup\overline{B})$ and $\kappa_{\rm a}^2$ is not a Dirichlet eigenvalue of $-\Delta$ in $P$. Denote the boundary of $P$ by $\partial P$. Consider that the incident wave is given by a plane  wave $u^{\rm inc}(x,d)$ and a point source $v^{\rm inc}(x,z)$, i.e., $u^{\rm inc}(x,d)=e^{{\rm i}\kappa_{\rm a}x\cdot d}$ and $v^{\rm inc}(x,z)=\Phi(x,z;\kappa_{\rm a})$, where $z\in\partial P$ is the source location. Assume further that $\{u_{D\cup B}^{\rm s}(x)(x,d), u_{D\cup B}^\infty(\hat{x},z)\}$ and $\{v_{D\cup B}^{\rm s}(x,z), v_{D\cup B}^\infty(\hat{x},z)\}$ are the scattered field and the far-field pattern generated by $D\cup B$ corresponding to the incident field $u^{\rm inc}(x,d)$ and $v^{\rm inc}(x,z)$, respectively.

Now we present a uniqueness result for the phaseless inverse scattering problem. A similar uniqueness result may be found in \cite[Theorem 4.1]{ZG18} for the phaseless inverse medium scattering problem. 

\begin{theorem}\label{unithmphaseless}
Let $D_1$ and $D_2$ be two elastic obstacles with $\mathcal{C}^2$ boundaries, and $\omega$ is not a Jones frequency either for $D_1$ or $D_2$.
Suppose that the far-field patterns satisfy the following conditions:
\begin{align}
|u_{D_1\cup B}^\infty(\hat{x},d_0)|&=|u_{D_2\cup B}^\infty(\hat{x},d_0)|, \qquad&&\forall\hat{x}\in\Omega, \label{unicondi1}\\
|v_{D_1\cup B}^\infty(\hat{x},z)|&=|v_{D_2\cup B}^\infty(\hat{x},z)|, \qquad&&\forall(\hat{x},z)\in\Omega\times\partial{P}, \label{unicondi2}\\
|u_{D_1\cup B}^\infty(\hat{x},d_0)+v_{D_1\cup B}^\infty(\hat{x},z)|&=|u_{D_2\cup B}^\infty(\hat{x},d_0)+v_{D_2\cup B}^\infty(\hat{x},z)|, \qquad&&\forall(\hat{x},z)\in\Omega\times\partial{P} \label{unicondi3}
\end{align}
for a fixed $d_0\in\Omega$, then $D_1=D_2$.
\end{theorem}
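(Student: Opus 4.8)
The plan is to adapt the classical argument for recovering the phase of a far-field pattern from its modulus together with the moduli of mixed sums (the "reference-point" or Kirsch–Kress-type trick), combined with the known uniqueness result for the \emph{phased} acoustic-elastic interaction problem. First I would reduce the three modulus conditions \eqref{unicondi1}--\eqref{unicondi3} to a genuine equality of phased far-field patterns. The elementary identity $|a+b|^2 = |a|^2 + |b|^2 + 2\Re(a\overline{b})$ applied to \eqref{unicondi3}, after subtracting \eqref{unicondi1} and \eqref{unicondi2}, gives
\[
\Re\big(u_{D_1\cup B}^\infty(\hat{x},d_0)\,\overline{v_{D_1\cup B}^\infty(\hat{x},z)}\big)
= \Re\big(u_{D_2\cup B}^\infty(\hat{x},d_0)\,\overline{v_{D_2\cup B}^\infty(\hat{x},z)}\big)
\]
for all $(\hat{x},z)\in\Omega\times\partial P$. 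The point source location $z$ enters analytically, so I would exploit analyticity in $z$ (the map $z\mapsto v_{D_j\cup B}^\infty(\hat x,z)$ extends analytically to a neighborhood of $\partial P$ by well-posedness of the direct problem and analytic dependence on the source point) to upgrade the "real part" equality to a full complex equality, up to a unimodular constant that a priori could depend on $\hat x$. A standard way to pin down that constant is to use the reciprocity relation for the point-source far-field pattern, $v_{D\cup B}^\infty(\hat x, z) = \gamma\, w_{D\cup B}^\infty(-z\text{-direction}\dots)$-type symmetry, together with the fact that $B$ is \emph{known}; the presence of the known reference ball $B$ is exactly what fixes the otherwise-free modulus/phase ambiguity, as in \cite{ZG18}. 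This yields
\[
u_{D_1\cup B}^\infty(\hat x, d_0) = u_{D_2\cup B}^\infty(\hat x, d_0), \qquad \forall \hat x\in\Omega,
\]
i.e.\ equal phased far-field data for the scatterer $D_j\cup B$.

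Next I would invoke the uniqueness theorem for the phased IAEIP (the analogue of \cite{MS2009, QYZ2018} cited in the introduction, applicable since $\omega$ is not a Jones frequency for either obstacle). From the equality of phased far-field patterns for $D_1\cup B$ and $D_2\cup B$ under the single incident direction $d_0$ — and, if the single-wave phased uniqueness is too weak, supplemented by the point-source data $\{v_{D_j\cup B}^\infty(\hat x,z)\}_{z\in\partial P}$, which by analyticity and Rellich's lemma forces the scattered fields to coincide in the unbounded component of the complement of $D_1\cup B\cup D_2\cup B$ — one concludes that the total scatterers coincide as sets: $D_1\cup B = D_2\cup B$. Since $B$ is a fixed known ball with $D_j\cap B=\emptyset$, removing $B$ from both sides gives $D_1 = D_2$.

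The main obstacle I anticipate is the passage from the real-part equality to the exact complex equality of far-field patterns, including the elimination of the $\hat x$-dependent unimodular factor: this is where the geometry enters, and it requires using both the reference ball $B$ and the auxiliary disk $P$ in an essential way (the hypothesis that $\kappa_{\rm a}^2$ is not a Dirichlet eigenvalue of $-\Delta$ in $P$ is needed precisely so that the point sources on $\partial P$ generate a rich enough family). A secondary technical point is justifying the analytic continuation in $z$ and the application of Rellich's lemma/unique continuation across the (possibly overlapping) boundaries of $D_1\cup B$ and $D_2\cup B$; this is routine once the direct scattering problem's well-posedness and analytic dependence on parameters — already established earlier in the paper and in \cite{LukeMartin1995, YHX2017} — are in hand. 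I would model the detailed bookkeeping on \cite[Theorem 4.1]{ZG18}, adapting the scalar medium-scattering argument there to the present transmission (fluid-solid) setting, the only substantive change being that the "interior" problem is the elastic Navier system rather than a scalar Helmholtz equation, which is harmless since we only use the acoustic far-field data and the exterior uniqueness.
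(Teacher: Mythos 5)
Your opening reduction is the same as the paper's: subtracting \eqref{unicondi1} and \eqref{unicondi2} from the expansion of \eqref{unicondi3} gives $\Re\{u_{D_1\cup B}^\infty(\hat{x},d_0)\overline{v_{D_1\cup B}^\infty(\hat{x},z)}\}=\Re\{u_{D_2\cup B}^\infty(\hat{x},d_0)\overline{v_{D_2\cup B}^\infty(\hat{x},z)}\}$. But from there the proposal has a genuine gap exactly at the step you yourself flag as "the main obstacle." The claim that analyticity in $z$ upgrades this real-part identity to a complex identity "up to a unimodular constant" is not a valid step as stated (the real part of an analytic family does not determine the missing phase pointwise in $\hat x$), and "reciprocity plus the fact that $B$ is known" is not by itself a mechanism. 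What the paper actually does is: (i) use the mixed reciprocity relation $\gamma_{\rm a}v^\infty(\hat{x},z)=u^{\rm s}(z,-\hat{x})$ to convert the point-source far-field data into plane-wave scattered fields evaluated at $z\in\partial P$; (ii) run the argument of \cite[Theorem 3.1]{ZG18} to conclude only that $u_{D_1\cup B}^{\rm s}(x,d)=e^{{\rm i}\gamma(-d)}u_{D_2\cup B}^{\rm s}(x,d)$ with a unimodular factor depending on the incident direction $d$ ranging over an open arc $S$ (not on $\hat x$, and not merely at $d_0$); and (iii) kill this factor by a Betti/Green identity on $\Gamma_B$, using the acoustic-elastic transmission conditions there and the auxiliary solution $(w,\boldsymbol{W})$ of the scattering problem for the reference ball $B$ alone, which forces $(1-e^{{\rm i}\gamma(-d)})\,w^\infty(-d,d_0)=0$ and hence $e^{{\rm i}\gamma(-d)}=1$ on a subarc because $w^\infty(\cdot,d_0)\not\equiv 0$. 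This concrete interaction with the elastic ball is the heart of the proof and is absent from your sketch.

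The endgame is also not viable as proposed. Even if you had $u_{D_1\cup B}^\infty(\cdot,d_0)=u_{D_2\cup B}^\infty(\cdot,d_0)$ for the single direction $d_0$, uniqueness for the phased IAEIP from one incident plane wave is not available (the paper explicitly lists it as open), and your fallback of "supplementing with the point-source data" does not work because only the moduli $|v_{D_j\cup B}^\infty|$ are hypothesized; the real-part identity leaves a sign/phase ambiguity in $v^\infty$, so you cannot feed phased point-source data into Rellich's lemma. The paper avoids this entirely: step (iii) yields equality of the far fields for all incident directions $d$ with $-d$ in an open arc $\tilde S$, which is then extended by reciprocity and analyticity to all incident and observation directions, so that the all-incident-directions uniqueness theorem \cite[Theorem 4.1]{MS2009} applies and gives $D_1=D_2$. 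So while your overall circle of ideas (reference-ball phase retrieval in the spirit of \cite{ZG18} followed by phased uniqueness) is the right one, both the elimination of the phase factor and the passage to a usable phased uniqueness statement are missing.
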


\begin{proof} 	
By \eqref{unicondi1}--\eqref{unicondi3}, we have
$$
\Re{\rm e}\Big\{u_{D_1\cup B}^\infty(\hat{x},d_0)\overline{v_{D_1\cup B}^\infty(\hat{x},z)}\Big\}=\Re{\rm e}\Big\{u_{D_2\cup B}^\infty(\hat{x},d_0)\overline{v_{D_2\cup B}^\infty(\hat{x},z)}\Big\}, \quad \forall\hat{x}\in\Omega, z\in\partial{P}.
$$
In view of \eqref{unicondi1} and \eqref{unicondi2}, we assume that 
$$
u_{D_j\cup B}^\infty(\hat{x},d_0)=r(\hat{x},d_0)e^{{\rm i}\alpha_j(\hat{x},d_0)}, \quad v_{D_j\cup B}^\infty(\hat{x},z)=s(\hat{x},z)e^{{\rm i}\beta_j(\hat{x},z)}, \quad j=1, 2,
$$
where $r(\hat{x},d_0)=|u_{D_j\cup B}^\infty(\hat{x},d_0)|$, $s(\hat{x},z)=|v_{D_j\cup B}^\infty(\hat{x},z)|$, $\alpha_j(\hat{x},d_0)$ and $\beta_j(\hat{x},z)$ are real-valued functions, $j=1, 2$. Following the proof of \cite[Theorem 3.16]{DR-book2013}, we can show the mixed reciprocity relation
\begin{align}\label{mixedreci}
\gamma_{\rm a}v^\infty(\hat{x},z)=u^{\rm s}(z,-\hat{x}).
\end{align}
Using \eqref{mixedreci} and similar arguments in \cite[Theorem 3.1]{ZG18}, we obtain that
\begin{align}\label{argu}
u_{D_1\cup B}^{\rm s}(x,d)=e^{{\rm i}\gamma(-d)}u_{D_2\cup B}^{\rm s}(x,d), \quad \forall x\in\mathbb{R}^2\setminus(D_1\cup D_2\cup B),\quad -d\in S,
\end{align}
where $\gamma(\hat{x}):=\alpha_1(\hat{x},d_0)-\alpha_2(\hat{x},d_0)-2m\pi$, $\hat{x}\in S$, $m\in\mathbb{Z}$, and $S\subset\Omega$ is an open arc. Furthermore, for $x\in\Gamma_B$, $-d\in S$, we get
$$
u_{D_1\cup B}^{\rm s}(x,d)=e^{{\rm i}\gamma(-d)}u_{D_2\cup B}^{\rm s}(x,d), \quad \frac{\partial u_{D_1\cup B}^{\rm s}(x,d)}{\partial\nu}=e^{{\rm i}\gamma(-d)}\frac{\partial u_{D_2\cup B}^{\rm s}(x,d)}{\partial\nu}.
$$
Noting that the total fields $(u_{D_1\cup B},\boldsymbol{U}_{D_1\cup B})$ and $(u_{D_2\cup B},\boldsymbol{U}_{D_2\cup B})$ are the solutions of \eqref{Navier equation}--\eqref{radiation} corresponding to the scatterers $D_1\cup B$ and $D_2\cup B$, respectively, we find that
$$
\tilde{u}(x,d):=u_{D_1\cup B}(x,d)-e^{{\rm i}\gamma(-d)}u_{D_2\cup B}(x,d),\quad\widetilde{\boldsymbol{U}}(x,d):=\boldsymbol{U}_{D_1\cup B}(x,d)-e^{{\rm i}\gamma(-d)}\boldsymbol{U}_{D_2\cup B}(x,d)
$$
satisfy the Navier equation and the Helmholtz equation
\begin{align*}
\mu\Delta\widetilde{\boldsymbol{U}}+(\lambda+\mu)\nabla\nabla\cdot\widetilde{\boldsymbol{U}}+\omega^2\rho_{\rm e}\widetilde{\boldsymbol{U}}&=0 \quad{\rm in~} {B}, \\
\Delta \tilde{u}+\kappa_{\rm a}^2\tilde{u}&=0\quad {\rm in~}\mathbb{R}^2\setminus\overline{D_1\cup D_2\cup B}, 
\end{align*}
and the transmission conditions on $\Gamma_B$
\begin{equation}\label{transmission34}
T(\widetilde{\boldsymbol{U}})=-\tilde{u}\nu, \quad 
\widetilde{\boldsymbol{U}}\cdot\nu=\frac{1}{\omega^2\rho_{\rm a}}\partial_\nu\tilde{u}.
\end{equation}

Suppose that $\left(w(x,d_0),\boldsymbol{W}(x,d_0)\right)$ is the solution of \eqref{Navier equation}--\eqref{radiation} corresponding to the single reference ball $B$ with incident plane wave $u^{\rm inc}(x,d_0)$, then the far-field $w^\infty(\hat{x},d_0)\not\equiv0$, $\hat{x}\in\Omega$. Using the Betti formula and the transmission condition \eqref{transmission34}, and noting the identity
$$
\tilde{u}(x,d)=\left(1-e^{{\rm i}\gamma(-d)}\right)u^{\rm inc}(x,d), \quad\frac{\partial\tilde{u}}{\partial\nu}(x,d)=\left(1-e^{{\rm i}\gamma(-d)}\right)\frac{\partial u^{\rm inc}}{\partial\nu}\left(x,d\right), \forall x \in \Gamma_B, -d\in S,
$$
we have 
\begin{align*}
0&=\int_{\Gamma_B}\Big\{T(\boldsymbol{W})(y,d_0)\cdot\widetilde{\boldsymbol{U}}(y,d)-T(\widetilde{\boldsymbol{U}})(y,d)\cdot\boldsymbol{W}(y,d_0)\Big\}\mathrm{d}s(y) \\
&=\frac{-1}{\omega^2\rho_{\rm a}}\int_{\Gamma_B}\Big\{w(y,d_0)\frac{\partial\tilde{u}(y,d)}{\partial\nu}-\tilde{u}(y,d)\frac{\partial w(y,d_0)}{\partial\nu}\Big\}\mathrm{d}s(y) \\
&=\frac{-(1-e^{{\rm i}\gamma(-d)})}{\omega^2\rho_{\rm a}} \int_{\Gamma_B}\Big\{w(y,d_0)\frac{\partial u^{\rm inc}(y,d)}{\partial\nu} -u^{\rm inc}(y,d)\frac{\partial w(y,d_0)}{\partial\nu}\Big\}\mathrm{d}s(y) \\
&=\frac{-(1-e^{{\rm i}\gamma(-d)})}{\omega^2\rho_{\rm a}}w^\infty(-d,d_0), \quad \forall -d\in S. 
\end{align*}

We claim $|w^\infty(-d,d_0)|\not\equiv0$, $\forall-d\in S$. Otherwise, we obtain by using the analytic continuation that  $w^\infty(\hat{x},d_0)=0$, $\forall \hat{x}\in\Omega$. This is a contradiction. By continuity, there exists an open curve $\tilde{S}\subset S$, such that $|w^\infty(-d,d_0)|\not =0$, $\forall-d\in \tilde{S}$, which implies that $e^{{\rm i}\gamma(-d)}=1$ for $-d\in \tilde{S}$. From \eqref{argu}, we have
$$
u_{D_1\cup B}^\infty(\hat{x},d)=u_{D_2\cup B}^\infty(\hat{x},d),\quad\forall(\hat{x},-d)\in\Omega\times \tilde{S}.
$$
Again, using the reciprocity relation and the analyticity of $u_{D_j\cup B}^\infty(\hat{x},d)$ for $j=1, 2$, we obtain that the far-field patterns
$u_{D_1\cup B}^\infty$ and $u_{D_2\cup B}^\infty$ coincide for all observation and incident directions $\hat{x},d\in\Omega$. We conclude from \cite[Theorem 4.1]{MS2009} that $D_1=D_2$.
\end{proof}

\begin{remark}
	In view of the proof of Theorem 4.3, we can also assume that the scatterers $D_1, D_2$ and the reference ball $B$ possess different mass densities and Lam\'{e} parameters.
\end{remark}

\begin{remark}
	Theorem \ref{unithmphaseless} only gives a sufficient condition to uniquely reconstruct the the unknown obstacle $D$ with phaseless data. We expect the uniqueness result also holds with much less data. In particular, for our numerical practice, we do not use the phaseless far-field data generated by the point source $v^{\rm inc}(x,z)$ with $z\in \partial P$, although in this case, the uniqueness result is still under investigation. 
\end{remark}

\section{Nystr\"{o}m-type discretization for boundary integral equations}

In this section, we introduce a Nystr\"{o}m-type discretization for the boundary integral equations and present some effective numerical quadrature to handle the singular integrals. 

\subsection{Parametrization}

For simplicity, the boundary $\Gamma_D$ is assumed to be a starlike curve with the parametrized form
\begin{equation*}
\Gamma_D=\{p(\hat{x})=c+r(\hat{x})\hat{x}; ~c=(c_1,c_2)^\top,\ \hat{x}\in\Omega\},
\end{equation*} 
where $\Omega=\{\hat{x}(t)=(\cos t, \sin t)^\top; ~0\leq t< 2\pi\}$.
We introduce the parametrized integral operators which are still represented by $S_\kappa$, $S_\kappa^\infty$, $K_\kappa$ and $H_\kappa$ for convenience, i.e.,
\begin{align*}
\big(S_\kappa[\vartheta;p]\big)(t)&=\int_0^{2\pi}\widetilde M(t,\varsigma;\kappa) \vartheta(\varsigma)\mathrm{d}\varsigma,\\
\big(S^\infty_\kappa[\vartheta;p]\big)(t)&=\gamma_{\kappa}\int_0^{2\pi}e^{-\mathrm{i}\kappa\hat{x}(t)\cdot p(\varsigma)}\vartheta(\varsigma)\mathrm{d}\varsigma,
\\
\big(K_\kappa[\vartheta;p]\big)(t)&=\frac{1}{G(t)}\int_0^{2\pi}\widetilde K(t,\varsigma;\kappa)\vartheta(\varsigma)\mathrm{d}\varsigma, \\
\big(H_\kappa[\vartheta;p]\big)(t)&=\frac{1}{G(t)}\int_0^{2\pi}  \widetilde
H(t,\varsigma;\kappa)\vartheta(\varsigma)\mathrm{d}\varsigma,
\end{align*}
where $\vartheta(\varsigma)=G(\varsigma)g(p(\varsigma))$, $G(\varsigma):=|p'(\varsigma)|=\sqrt{(r'(\varsigma))^2+r^2(\varsigma)}$ is the Jacobian of the transformation,
\begin{align*}
\widetilde
M(t,\varsigma;\kappa)&=\frac{\mathrm{i}}{2}H_0^{(1)}(\kappa|p(t)-p(\varsigma)|),
\\
\widetilde K(t,\varsigma;\kappa)&=\frac{\mathrm{i}\kappa}{2}\mathsf{n}(t)\cdot[p(\varsigma)-p(t)]
\frac{H_1^{(1)}(\kappa|p(t)-p(\varsigma)|)}{|p(t)-p(\varsigma)|},
\\
\widetilde H(t,\varsigma;\kappa)&=\frac{\mathrm{i}\kappa}{2}\mathsf{n}(t)^\perp\cdot[p(\varsigma)-p(t)]
\frac{H_1^{(1)}(\kappa|p(t)-p(\varsigma)|)}{|p(t)-p(\varsigma)|},
\end{align*}
and
\begin{align*}
\mathsf{n}(t)&:=\tilde{\nu}(t)|p'(t)|=\Big(p'_2(t), -p'_1(t)\Big)^\top, \quad\tilde{\nu}=\nu\circ p,\\
\mathsf{n}(t)^\perp&:=\tilde{\tau}(t)|p'(t)|=\Big(p'_1(t), p'_2(t)\Big)^\top, \quad\tilde{\tau}=\tau\circ p.
\end{align*}
Thus, \eqref{boundaryIE} can be reformulated as the parametrized integral equations
\begin{align}
\begin{split} \label{paraboundaryIE}
w_1=&-\mu\kappa^2_{\rm p}\tilde{\nu}^\top S_{\kappa_{\rm p}}\big[\langle\tilde{\nu},\tilde{\nu}\rangle\varphi_1G;p\big]\tilde{\nu} +\mu\tilde{\nu}^\top K_{\kappa_{\rm p}}\big[\tilde{\tau}\varphi'_1+\tilde{\tau}'\varphi_1;p\big]
-\mu\tilde{\nu}^\top H_{\kappa_{\rm p}}\big[\tilde{\nu}\varphi'_1+\tilde{\nu}'\varphi_1;p\big] \\
&+\mu\kappa^2_{\rm s}\tilde{\nu}^\top S_{\kappa_{\rm s}}\big[\langle\tilde{\tau},\tilde{\nu}\rangle \varphi_2G;p\big]\tilde{\nu} +\mu\tilde{\nu}^\top K_{\kappa_{\rm s}}\big[\tilde{\nu}\varphi'_2+\tilde{\nu}'\varphi_2;p\big]+\mu\tilde{\nu}^\top H_{\kappa_{\rm s}}\big[\tilde{\tau}\varphi'_2+\tilde{\tau}'\varphi_2;p\big] \\
&-(\lambda+\mu)\kappa_{\rm p}^2 S_{\kappa_{\rm p}}[\varphi_1G;p]+S_{\kappa_{\rm a}}[\varphi_3G;p]
+\mu(\tilde{\nu}\cdot\tilde{\tau}')\varphi_1/G+\mu(\tilde{\nu}\cdot\tilde{\nu}')\varphi_2/G+\mu\varphi'_2/G,  \\ 
w_2=&-\kappa^2_{\rm p}\tilde{\tau}^\top S_{\kappa_{\rm p}}\big[\langle\tilde{\nu},\tilde{\nu}\rangle\varphi_1G;p\big]\tilde{\nu}+\tilde{\tau}^\top K_{\kappa_{\rm p}}\big[\tilde{\tau}\varphi'_1+\tilde{\tau}'\varphi_1;p\big]-\tilde{\tau}^\top H_{\kappa_{\rm p}}\big[\tilde{\nu}\varphi'_1+\tilde{\nu}'\varphi_1;p\big] \\
&+\kappa^2_{\rm s}\tilde{\tau}^\top S_{\kappa_{\rm s}}\big[\langle\tilde{\tau},\tilde{\nu}\rangle \varphi_2G;p\big]\tilde{\nu}+\tilde{\tau}^\top K_{\kappa_{\rm s}}\big[\tilde{\nu}\varphi'_2+\tilde{\nu}'\varphi_2;p\big]+\tilde{\tau}^\top H_{\kappa_{\rm s}}\big[\tilde{\tau}\varphi'_2+\tilde{\tau}'\varphi_2;p\big] \\
&+(\tilde{\tau}\cdot\tilde{\tau}')\varphi_1/G+\varphi'_1/G+(\tilde{\tau}\cdot\tilde{\nu}')\varphi_2/G, \\ 
w_3=&K_{\kappa_{\rm p}}[\varphi_1G;p]+H_{\kappa_{\rm s}}[\varphi_2G;p]-K_{\kappa_{\rm a}}[\varphi_3G;p]/(\omega^2\rho_{\rm a})+\varphi_1+\varphi_3/(\omega^2\rho_{\rm a}), 
\end{split}
\end{align}
where $w_j=2(f_j\circ p)$, $\varphi_j=(g_j\circ p)$, ${\varphi}'_j=(g_j\circ p)'$, $j=1, 2, 3$, and $\tilde{\tau}':=(\tilde{\tau}'_1,\tilde{\tau}'_2)^\top$, $\tilde{\nu}':=(\tilde{\nu}'_1,\tilde{\nu}'_2)^\top$.

To avoid calculating the derivative of the Jacobi $G$ in numerical discretization, we transform the parametrized integral equations \eqref{paraboundaryIE} to  
\begin{align}
\begin{split}\label{pboundaryIE}
w_1=&-\mu\kappa^2_{\rm p}\tilde{\nu}^\top S_{\kappa_{\rm p}}\big[\langle \mathsf{n},\mathsf{n}\rangle\tilde{\varphi}_1;p\big]\tilde{\nu} +\mu\tilde{\nu}^\top K_{\kappa_{\rm p}}\big[\mathsf{n}^\perp{\tilde{\varphi}_1}'+{\mathsf{n}^\perp}'\tilde{\varphi}_1;p\big]
-\mu\tilde{\nu}^\top H_{\kappa_{\rm p}}\big[\mathsf{n}{\tilde{\varphi}_1}'+\mathsf{n}'\tilde{\varphi}_1;p\big] \\
&+\mu\kappa^2_{\rm s}\tilde{\nu}^\top S_{\kappa_{\rm s}}\big[\langle \mathsf{n}^\perp,\mathsf{n}\rangle\tilde{\varphi}_2;p\big]\tilde{\nu} +\mu\tilde{\nu}^\top K_{\kappa_{\rm s}}\big[\mathsf{n}\tilde{\varphi}'_2+\mathsf{n}'\tilde{\varphi}_2;p\big]+\mu\tilde{\nu}^\top H_{\kappa_{\rm s}}\big[\mathsf{n}^\perp{\tilde{\varphi}_2}'+{\mathsf{n}^\perp}'\tilde{\varphi}_2;p\big] \\
&-(\lambda+\mu)\kappa_{\rm p}^2 S_{\kappa_{\rm p}}[\tilde{\varphi}_1G^2;p]+S_{\kappa_{\rm a}}[\tilde{\varphi}_3G^2;p]
+\mu(\tilde{\nu}\cdot{\mathsf{n}^\perp}')\tilde{\varphi}_1/G+\mu(\tilde{\nu}\cdot \mathsf{n}')\tilde{\varphi}_2/G+\mu\tilde{\varphi}'_2, \\ 
w_2=&-\kappa^2_{\rm p}\tilde{\tau}^\top S_{\kappa_{\rm p}}\big[\langle\mathsf{n},\mathsf{n}\rangle\tilde{\varphi}_1;p\big]\tilde{\nu}+\tilde{\tau}^\top K_{\kappa_{\rm p}}\big[\mathsf{n}^\perp{\tilde{\varphi}_1}'+{\mathsf{n}^\perp}'\tilde{\varphi}_1;p\big]-\tilde{\tau}^\top H_{\kappa_{\rm p}}\big[\mathsf{n}{\tilde{\varphi}_1}'+\mathsf{n}'\tilde{\varphi}_1;p\big] \\
&+\kappa^2_{\rm s}\tilde{\tau}^\top S_{\kappa_{\rm s}}\big[\langle\mathsf{n}^\perp,\mathsf{n}\rangle\tilde{\varphi}_2;p\big]\tilde{\nu}+\tilde{\tau}^\top K_{\kappa_{\rm s}}\big[\mathsf{n}\tilde{\varphi}'_2+\mathsf{n}'\tilde{\varphi}_2;p\big]+\tilde{\tau}^\top H_{\kappa_{\rm s}}\big[\mathsf{n}^\perp{\tilde{\varphi}_2}'+{\mathsf{n}^\perp}'\tilde{\varphi}_2;p\big] \\
&+(\tilde{\tau}\cdot{\mathsf{n}^\perp}')\tilde{\varphi}_1/G+\tilde{\varphi}'_1+(\tilde{\tau}\cdot\mathsf{n}')\tilde{\varphi}_2/G,  \\ 
w_3=&K_{\kappa_{\rm p}}[\tilde{\varphi}_1G^2;p]+H_{\kappa_{\rm s}}[\tilde{\varphi}_2G^2;p]-K_{\kappa_{\rm a}}[\tilde{\varphi}_3G^2;p]/(\omega^2\rho_{\rm a})+\tilde{\varphi}_1G+\tilde{\varphi}_3G/(\omega^2\rho_{\rm a}),
\end{split}
\end{align}
where $\tilde{\varphi}_l=\varphi_l/G$, $l=1,2,3$, $\mathsf{n}'=(p_2'',-p_1'')^\top$, and ${\mathsf{n}^\perp}'=(p_1'',p_2'')^\top$.

\subsection{Discretization}

The kernel $\widetilde{M}$ and $\widetilde{K}$ of the parametrized single-layer and normal derivative integral operators can be written in form of
\begin{align*}
&\widetilde{M}(t,\varsigma;\kappa)=\widetilde{M}_1(t,\varsigma;\kappa)\ln\bigg(4\sin^2\frac{t-\varsigma}{2}\bigg)+\widetilde{M}_2(t,\varsigma;\kappa),\\
&\widetilde{K}(t,\varsigma;\kappa)=\widetilde{K}_1(t,\varsigma;\kappa)\ln\bigg(4\sin^2\frac{t-\varsigma}{2}\bigg)+\widetilde{K}_2(t,\varsigma;\kappa),
\end{align*}
where
\begin{align*}
\widetilde M_1(t,\varsigma;\kappa)&= -\frac{1}{2\pi}J_0(\kappa|p(t)-p(\varsigma)|), \\
\widetilde M_2(t,\varsigma;\kappa)&=\widetilde M(t,\varsigma;\kappa)-\widetilde M_1(t,\varsigma;\kappa)\ln\bigg(4\sin^2\frac{t-\varsigma}{2}\bigg),\\
\widetilde K_1(t,\varsigma;\kappa)&= \frac{\kappa}{2\pi}\mathsf{n}(t)\cdot\big[p(t)-p(\varsigma)\big]\frac{J_1(\kappa|p(t)-p(\varsigma)|)}{|p(t)-p(\varsigma)|}, \\
\widetilde K_2(t,\varsigma;\kappa)&=\widetilde K(t,\varsigma;\kappa)-\widetilde K_1(t,\varsigma;\kappa)\ln\bigg(4\sin^2\frac{t-\varsigma}{2}\bigg),
\end{align*}
and the diagonal terms are given as
\begin{align*}
&\widetilde M_1(t,t;\kappa)=-\frac{1}{2\pi},  &&\widetilde M_2(t,t;\kappa)=\frac{\rm i}{2}-\frac{E_{\rm c}}{\pi}-\frac{1}{\pi}\ln\Big(\frac{\kappa}{2}G(t)\Big), \\
&\widetilde K_1(t,t;\kappa)=0,  
&&\widetilde K_2(t,t;\kappa)= \frac{1}{2\pi}\frac{\mathsf{n}(t)\cdot p''(t)}{|p'(t)|^2},
\end{align*}
with the Euler constant $E_{\rm c}=0.57721\cdots$.

For the kernel $\widetilde{H}$ of parametrized tangential derivative integral operator, analogously to \cite{DLL2019}, we split the kernel in the form  
$$
\widetilde{H}(t,\varsigma;\kappa)=\widetilde{H}_1(t,\varsigma;\kappa)\frac{1}{\sin(\varsigma-t)}+\widetilde{H}_2(t,\varsigma;\kappa)\ln\bigg(4\sin^2\frac{t-\varsigma}{2}\bigg)+\widetilde{H}_3(t,\varsigma;\kappa),
$$
where
\begin{align*}
\widetilde H_1(t,\varsigma;\kappa)&= \frac{1}{\pi}\mathsf{n}(t)^\perp\cdot\big[p(\varsigma)-p(t)\big]\frac{\sin(\varsigma-t)}{|p(t)-p(\varsigma)|^2}, \\
\widetilde H_2(t,\varsigma;\kappa)&= \frac{\kappa}{2\pi}\mathsf{n}(t)^\perp\cdot\big[p(t)-p(\varsigma)\big]\frac{J_1(\kappa|p(t)-p(\varsigma)|)}{|p(t)-p(\varsigma)|},\\
\widetilde H_3(t,\varsigma;\kappa)&=\widetilde H(t,\varsigma;\kappa)-\widetilde H_1(t,\varsigma;\kappa)\frac{1}{\sin(\varsigma-t)}-\widetilde H_2(t,\varsigma;\kappa)\ln\bigg(4\sin^2\frac{t-\varsigma}{2}\bigg)
\end{align*}
turn out to be analytic with the diagonal terms
$$
\widetilde H_1(t,t;\kappa)=\frac{1}{\pi}, \quad \widetilde H_2(t,t;\kappa)=0, \quad \widetilde H_3(t,t;\kappa)=0.
$$

Let $\varsigma_j^{(n)}:=\pi j/n$, $j=0,\cdots,2n-1$ be an equidistant set of quadrature nodes. For the integral of weakly singular part, by making use of quadrature rule in our previous work \cite[eqn. (4.6)]{DLL2019}, we employ the following quadrature rules
\begin{align}
&\int_{0}^{2\pi}\ln\bigg(4\sin^2\frac{t-\varsigma}{2}\bigg)Q(t,\varsigma)f(\varsigma)\mathrm{d}\varsigma\approx\sum_{j=0}^{2n-1}R_j^{(n)}(t)Q(t,\varsigma_j^{(n)})f(\varsigma_j^{(n)}), \label{quadrature1} \\
&\int_{0}^{2\pi}\frac{1}{\sin(\varsigma-t)}Q(t,\varsigma)f(\varsigma)\mathrm{d}\varsigma\approx\sum_{j=0}^{2n-1}T_j^{(n)}(t)Q(t,\varsigma_j^{(n)})f(\varsigma_j^{(n)}), \label{quadrature2}
\end{align}
where the function $Q$ is required to be continuous, and the quadrature weights are given by
\begin{align*}
&R_j^{(n)}(t)=-\frac{2\pi}{n}\sum_{m=1}^{n-1}\frac{1}{m}\cos\Big[m(t-\varsigma_j^{(n)})\Big] -\frac{\pi}{n^2}\cos\Big[n(t-\varsigma_j^{(n)})\Big] 
\\
&T_j^{(n)}(t)=
\begin{cases} 
\displaystyle 
-\frac{2\pi}{n}\sum_{m=0}^{(n-3)/2}\sin\Big[ (2m+1)(t-\varsigma_j^{(n)})\Big] -\frac{\pi}{n}\sin\Big[n(t-\varsigma_j^{(n)})\Big], \quad &n=1,3,5,\cdots, \\ \displaystyle
-\frac{2\pi}{n}\sum_{m=0}^{n/2-1}\sin\Big[ (2m+1)(t-\varsigma_j^{(n)})\Big], \quad &n=2,4,6,\cdots.
\end{cases}
\end{align*}
We also refer to \cite{Kress-book2014} for details of \eqref{quadrature1}. On the other hand, with the help of trapezoidal rule 
\begin{align} \label{traperule}
\int_{0}^{2\pi}f(\varsigma)\mathrm{d}\varsigma\approx\frac{\pi}{n}\sum_{j=0}^{2n-1}f(\varsigma_j^{(n)})
\end{align}
and Lagrange bases 
$$
\mathcal{L}_m(\varsigma)=\frac{1}{2n}\left\{1+2\sum_{k=1}^{n-1}\cos k(\varsigma-\varsigma_m^{(n)})+\cos n(\varsigma-\varsigma_m^{(n)})\right\}
$$
for the trigonometric interpolation, we derive the following quadrature rules for the integration with derivative involved 
\begin{align}
&\int_{0}^{2\pi}Q(t,\varsigma)f'(\varsigma)\mathrm{d}\varsigma\approx\frac{\pi}{n}\sum_{j=0}^{2n-1}\sum_{m=0}^{2n-1}d_{m-j}^{(n)}Q(t,\varsigma_m^{(n)})f(\varsigma_j^{(n)}),\\
&\int_{0}^{2\pi}\ln\bigg(4\sin^2\frac{t-\varsigma}{2}\bigg)Q(t,\varsigma)f'(\varsigma)\mathrm{d}\varsigma\approx\sum_{j=0}^{2n-1}\sum_{m=0}^{2n-1}d_{m-j}^{(n)}R_m^{(n)}(t)Q(t,\varsigma_m^{(n)})f(\varsigma_j^{(n)}), \label{quadrature1_der} \\
&\int_{0}^{2\pi}\frac{1}{\sin(\varsigma-t)}Q(t,\varsigma)f'(\varsigma)\mathrm{d}\varsigma\approx\sum_{j=0}^{2n-1}\sum_{m=0}^{2n-1}d_{m-j}^{(n)}T_m^{(n)}(t)Q(t,\varsigma_m^{(n)})f(\varsigma_j^{(n)}),  \label{quadrature2_der}
\end{align}
where we have set $d_{m-j}^{(n)}=\mathcal{L}'_j(\varsigma_m^{(n)})$, and the quadrature weights can be given by
\begin{align*}
&d_{j}^{(n)}=
\begin{cases} \displaystyle
\frac{(-1)^j}{2}\cot\frac{j\pi}{2n}, \quad &j=\pm1,\cdots,\pm2n-1,\\  \displaystyle
0,\quad & j=0.
\end{cases}
\end{align*}

Now, in view of the quadrature rules \eqref{quadrature1}--\eqref{quadrature2_der}, we employ following quadrature operators
\begin{align*}
S_\kappa(Q,\vartheta)(t)&=\sum_{j=0}^{2n-1}\left(R_{j}^{(n)}(t)\widetilde M_1(t,\varsigma_j^{(n)};\kappa)+ \frac{\pi}{n}\widetilde M_2(t,\varsigma_j^{(n)};\kappa)\right)Q(t,\varsigma_j^{(n)})\vartheta(\varsigma_j^{(n)}) \\
K_\kappa(Q,\vartheta)(t)&=\sum_{j=0}^{2n-1}\left(R_{j}^{(n)}(t)\widetilde K_1(t,\varsigma_j^{(n)};\kappa)+ \frac{\pi}{n}\widetilde K_2(t,\varsigma_j^{(n)};\kappa)\right)Q(t,\varsigma_j^{(n)})\vartheta(\varsigma_j^{(n)})/G(t)\\
\mathcal{K}_\kappa(Q,\vartheta')(t)&=\sum_{j=0}^{2n-1}\sum_{m=0}^{2n-1}\left(R_{m}^{(n)}(t)\widetilde K_1(t,\varsigma_m^{(n)};\kappa)+ \frac{\pi}{n}\widetilde K_2(t,\varsigma_m^{(n)};\kappa)\right)d_{m-j}^{(n)}Q(t,\varsigma_m^{(n)})\vartheta(\varsigma_j^{(n)})/G(t)
\end{align*}
\begin{align*}
 &H_\kappa(Q,\vartheta)(t)\\
=&\sum_{j=0}^{2n-1}\left(T_{j}^{(n)}(t)\widetilde H_1(t,\varsigma_j^{(n)};\kappa) + R_{j}^{(n)}(t)\widetilde H_2(t,\varsigma_j^{(n)};\kappa)+ \frac{\pi}{n}\widetilde H_3(t,\varsigma_j^{(n)};\kappa)\right)Q(t,\varsigma_j^{(n)})\vartheta(\varsigma_j^{(n)})/G(t)\\
 &\mathcal{H}_\kappa(Q,\vartheta')(t)\\
=&\sum_{j=0}^{2n-1}\sum_{m=0}^{2n-1}\left(T_{m}^{(n)}(t)
\widetilde H_1(t,\varsigma_m^{(n)};\kappa) + R_{m}^{(n)}(t)\widetilde H_2(t,\varsigma_m^{(n)};\kappa)+ \frac{\pi}{n}\widetilde H_3(t,\varsigma_m^{(n)};\kappa)\right)d_{m-j}^{(n)}Q(t,\varsigma_m^{(n)})\vartheta(\varsigma_j^{(n)})/G(t)
\end{align*}
as the approximation of integral operators $S_\kappa[Q\vartheta;p]$, $K_\kappa[Q\vartheta;p]$, $K_\kappa[Q\vartheta';p]$, $H_\kappa[Q\vartheta;p]$ and $H_\kappa[Q\vartheta';p]$.

To obtain a Nystr\"{o}m-type of discretization, we express the following combination
$$
\tilde{\varphi}_l^{(n)}(\varsigma)=\sum_{j=0}^{2n-1}\varUpsilon^{(l)}_j\mathcal{L}_j(\varsigma)
$$
with unknowns $\varUpsilon^{(l)}_j:=\tilde{\varphi}_l(\varsigma_j^{(n)})$ as finite dimensional approximation of the densities $\tilde{\varphi}_l$, $l=1,2,3$. Then, the derivative $\tilde{\varphi}'_l$ can be approximate by
$$
\tilde{\varphi}_l^{'(n)}(\varsigma)=\sum_{j=0}^{2n-1}\varUpsilon^{(l)}_j\mathcal{L}'_j(\varsigma).
$$
Hence, the full discretization of \eqref{pboundaryIE} can be deduced as the form
\begin{align}
\begin{cases}
\displaystyle
E^X_{1,i}\varUpsilon^{(1)}_i+\sum_{j=0}^{2n-1}X_{i,j}^{(1)}\varUpsilon^{(1)}_j+E^X_{2,i}\varUpsilon^{(2)}_i+\sum_{j=0}^{2n-1}\left(\mu d_{i-j}^{(n)} +X_{i,j}^{(2)}\right)\varUpsilon^{(2)}_j+\sum_{j=0}^{2n-1}X_{i,j}^{(3)}\varUpsilon^{(3)}_j=w_{1,i}^{(n)} \\
\displaystyle
E^Y_{1,i}\varUpsilon^{(1)}_i+\sum_{j=0}^{2n-1}\left(d_{i-j}^{(n)} +Y_{i,j}^{(1)}\right) \varUpsilon^{(1)}_j +E^Y_{2,i}\varUpsilon^{(2)}_i +\sum_{j=0}^{2n-1}Y_{i,j}^{(2)}\varUpsilon^{(2)}_j=w_{2,i}^{(n)} \\
\displaystyle
E^Z_{1,i}\varUpsilon^{(1)}_i+\sum_{j=0}^{2n-1}Z_{i,j}^{(1)}\varUpsilon^{(1)}_j+\sum_{j=0}^{2n-1}Z_{i,j}^{(2)}\varUpsilon^{(2)}_j+E^Z_{3,i}\varUpsilon^{(3)}_i+\sum_{j=0}^{2n-1}Z_{i,j}^{(3)}\varUpsilon^{(3)}_j=w_{3,i}^{(n)} \\
\end{cases}
\end{align}
where
\begin{align*}
&E^X_{1,i}=\mu\tilde{\nu}(\varsigma_i^{(n)})\cdot{\mathsf{n}^\perp}'(\varsigma_i^{(n)})/G(\varsigma_i^{(n)}),
&&E^X_{2,i}=\mu\tilde{\nu}(\varsigma_i^{(n)})\cdot\mathsf{n}'(\varsigma_i^{(n)})/G(\bar{\varsigma}_i^{(\bar{n})}),\\
&E^Y_{1,i}=\tilde{\tau}(\varsigma_i^{(n)})\cdot{\mathsf{n}^\perp}'(\varsigma_i^{(n)})/G(\varsigma_i^{(n)}),
&&E^Y_{2,i}=\tilde{\tau}(\varsigma_i^{(n)})\cdot\mathsf{n}'(\varsigma_i^{(n)})/G(\bar{\varsigma}_i^{(\bar{n})}),\\
&E^Z_{1,i}=G(\varsigma_i^{(n)}),
&&E^Z_{3,i}=G(\varsigma_i^{(n)})/(\omega^2\rho_{\rm a}),
\end{align*}
\begin{align*}
X^{(1)}\varUpsilon^{(1)}=&-\mu\kappa^2_{\rm p}S_{\kappa_{\rm p}}(Q^X_{11},\tilde{\varphi}_1)(\varsigma_i^{(n)})+\mu \mathcal{K}_{\kappa_{\rm p}}(Q^X_{12},\tilde{\varphi}'_1)(\varsigma_i^{(n)})+\mu K_{\kappa_{\rm p}}(Q^X_{13},\tilde{\varphi}_1)(\varsigma_i^{(n)}) \\
& -\mu \mathcal{H}_{\kappa_{\rm p}}(Q^X_{14},\tilde{\varphi}'_1)(\varsigma_i^{(n)})-\mu H_{\kappa_{\rm p}}(Q^X_{15},\tilde{\varphi}_1)(\varsigma_i^{(n)})-(\lambda+\mu)\kappa^2_{\rm p}S_{\kappa_{\rm p}}(Q^X_{16},\tilde{\varphi}_1)(\varsigma_i^{(n)})\\
X^{(2)}\varUpsilon^{(2)}=&\mu\kappa^2_{\rm s}S_{\kappa_{\rm s}}(Q^X_{21},\tilde{\varphi}_2)(\varsigma_i^{(n)})+\mu \mathcal{K}_{\kappa_{\rm s}}(Q^X_{22},\tilde{\varphi}'_2)(\varsigma_i^{(n)})+\mu K_{\kappa_{\rm s}}(Q^X_{23},\tilde{\varphi}_2)(\varsigma_i^{(n)}) \\
&+\mu \mathcal{H}_{\kappa_{\rm s}}(Q^X_{24},\tilde{\varphi}'_2)(\varsigma_i^{(n)})+ \mu H_{\kappa_{\rm s}}(Q^X_{25},\tilde{\varphi}_2)(\varsigma_i^{(n)})\\
X^{(3)}\varUpsilon^{(3)}=&S_{\kappa_{\rm a}}(Q^X_{31},\tilde{\varphi}_3)(\varsigma_i^{(n)})\\
Y^{(1)}\varUpsilon^{(1)}=&-\kappa^2_{\rm p}S_{\kappa_{\rm p}}(Q^Y_{11},\tilde{\varphi}_1)(\varsigma_i^{(n)})+\mathcal{K}_{\kappa_{\rm p}}(Q^Y_{12},\tilde{\varphi}'_1)(\varsigma_i^{(n)})+ K_{\kappa_{\rm p}}(Q^Y_{13},\tilde{\varphi}_1)(\varsigma_i^{(n)}) \\
&-\mathcal{H}_{\kappa_{\rm p}}(Q^Y_{14},\tilde{\varphi}'_1)(\varsigma_i^{(n)})-H_{\kappa_{\rm p}}(Q^Y_{15},\tilde{\varphi}_1)(\varsigma_i^{(n)})\\
Y^{(2)}\varUpsilon^{(2)}=&\kappa^2_{\rm s}S_{\kappa_{\rm s}}(Q^Y_{21},\tilde{\varphi}_2)(\varsigma_i^{(n)})+\mathcal{K}_{\kappa_{\rm s}}(Q^Y_{22},\tilde{\varphi}'_2)(\varsigma_i^{(n)})+ K_{\kappa_{\rm s}}(Q^Y_{23},\tilde{\varphi}_2)(\varsigma_i^{(n)}) \\
&+\mathcal{H}_{\kappa_{\rm s}}(Q^Y_{24},\tilde{\varphi}'_2)(\varsigma_i^{(n)})+  H_{\kappa_{\rm s}}(Q^Y_{25},\tilde{\varphi}_2)(\varsigma_i^{(n)})\\
Z^{(1)}\varUpsilon^{(1)}=&K_{\kappa_{\rm p}}(Q^Z_{11},\tilde{\varphi}_1)(\varsigma_i^{(n)}), \quad
Z^{(2)}\varUpsilon^{(2)}=H_{\kappa_{\rm s}}(Q^Z_{21},\tilde{\varphi}_2)(\varsigma_i^{(n)}),
\\
Z^{(3)}\varUpsilon^{(3)}=&-K_{\kappa_{\rm a}}(Q^Z_{31},\tilde{\varphi}_3)(\varsigma_i^{(n)})/(\omega^2\rho_{\rm a})
\end{align*}
with
\begin{align*}
&Q^X_{11}(t,\varsigma)=\tilde{\nu}^\top(t)\langle\mathsf{n}(\varsigma),\mathsf{n}(\varsigma)\rangle\tilde{\nu}(t),
&&Q^X_{12}(t,\varsigma)=\tilde{\nu}^\top(t)\mathsf{n}^\perp(\varsigma), \quad &&Q^X_{13}(t,\varsigma)=\tilde{\nu}^\top(t){\mathsf{n}^\perp}'(\varsigma), \\
&Q^X_{14}(t,\varsigma)=\tilde{\nu}^\top(t)\mathsf{n}(\varsigma),  &&Q^X_{15}(t,\varsigma)=\tilde{\nu}^\top(t)\mathsf{n}'(\varsigma), \quad
&&Q^X_{16}(t,\varsigma)=G^2(\varsigma), \\
&Q^X_{21}(t,\varsigma)=\tilde{\nu}^\top(t)\langle\mathsf{n}^\perp(\varsigma),\mathsf{n}(\varsigma)\rangle\tilde{\nu}(t), 
&&Q^X_{22}(t,\varsigma)=Q^X_{14}(t,\varsigma), 
&&Q^X_{23}(t,\varsigma)=Q^X_{15}(t,\varsigma), \\
&Q^X_{24}(t,\varsigma)=Q^X_{12}(t,\varsigma), 
&&Q^X_{25}(t,\varsigma)=Q^X_{13}(t,\varsigma), 
&&Q^X_{31}(t,\varsigma)=G^2(\varsigma),\\
&Q^Y_{11}(t,\varsigma)=\tilde{\tau}^\top(t)\langle\mathsf{n}(\varsigma),\mathsf{n}(\varsigma)\rangle\tilde{\nu}(t), 
&&Q^Y_{12}(t,\varsigma)=\tilde{\tau}^\top(t)\mathsf{n}^\perp(\varsigma), 
&&Q^Y_{13}(t,\varsigma)=\tilde{\tau}^\top(t){\mathsf{n}^\perp}'(\varsigma), \\
&Q^Y_{14}(t,\varsigma)=\tilde{\tau}^\top(t)\mathsf{n}(\varsigma), 
&&Q^Y_{15}(t,\varsigma)=\tilde{\tau}^\top(t)\mathsf{n}'(\varsigma), \\
&Q^Y_{21}(t,\varsigma)=\tilde{\tau}^\top(t)\langle\mathsf{n}^\perp(\varsigma),\mathsf{n}(\varsigma)\rangle\tilde{\nu}(t), 
&&Q^Y_{22}(t,\varsigma)=Q^Y_{14}(t,\varsigma), 
&&Q^Y_{23}(t,\varsigma)=Q^Y_{15}(t,\varsigma), \\
&Q^Y_{24}(t,\varsigma)=Q^Y_{12}(t,\varsigma), 
&&Q^Y_{25}(t,\varsigma)=Q^Y_{13}(t,\varsigma), \\
&Q^Z_{11}(t,\varsigma)=Q^Z_{21}(t,\varsigma)=Q^Z_{31}(t,\varsigma)=G^2(\varsigma),
\end{align*}
and $w_{l,i}^{(n)}=w_l(\varsigma_i^{(n)})$ for $i=0,\cdots,2n-1$, $l=1, 2, 3$.

\section{Reconstruction methods}

In this section, we introduce the iterative methods and the algorithms for the phased and phaseless IAEIP.

\subsection{Iterative method for the phased IAEIP}

We assume that the field equations are \eqref{boundaryIE},
and the data equation is given by
\begin{align*} 
S^\infty_{\kappa_{\rm a}}[g_3]=u_\infty.
\end{align*}
Thus, the field equations and data equation can be reformulated as the parametrized integral equations \eqref{paraboundaryIE} and
\begin{align}
S^\infty_{\kappa_{\rm a}}[\varphi_3G;p]=w_\infty, \label{paradata eqn}
\end{align}
where $w_\infty=u_\infty\circ p$.

In the reconstruction process, when an approximation of the boundary $\Gamma_D$
is available, the field equations \eqref{paraboundaryIE}
are solved for the densities $\varphi_l$, $l=1,2,3$. Once the approximated densities
$\varphi_l$ are computed, the update of the boundary $\Gamma_D$ can be
obtained by solving the linearized data equation \eqref{paradata eqn} with
respect to $\Gamma_D$.

\subsubsection{Iterative scheme}

The linearization of \eqref{paradata eqn} with respect to a given $p$ requires the Fr\'{e}chet derivative of the parameterized integral operator $S_\kappa^\infty$, which can be easily computed and is given by 
\begin{align} 
\bigg({S_\kappa^\infty}'[\vartheta;p]q\bigg)(t)=&-\mathrm{i}\kappa\gamma_\kappa
\int_0^{2\pi}e^{-\mathrm{i}\kappa\hat{x}(t)\cdot p(\varsigma)}\hat{x}(t)\cdot q(\varsigma)\vartheta(\varsigma)\mathrm{d}\varsigma \nonumber \\
=&-\mathrm{i}\kappa\gamma_\kappa\int_0^{2\pi}\exp\bigg(-\mathrm{i}\kappa\big(c_1\cos t+c_2\sin t+r(\varsigma)\cos(t-\varsigma)\big)\bigg) \nonumber \\
&\qquad\quad\cdot\bigg(\Delta c_1\cos t+\Delta c_2\sin t +\Delta r(\varsigma) \cos(t-\varsigma)\bigg)\vartheta(\varsigma)\,\mathrm{d}\varsigma, \label{FreSinfty}
\end{align}
where $q(\varsigma)=(\Delta c_1, \Delta c_2)+\Delta r(\varsigma) (\cos\varsigma,\sin\varsigma)$ denoted as the update of the boundary $\Gamma_D$. Then, the linearization of \eqref{paradata eqn} leads to
\begin{equation}\label{linear paradata eqn}
{S_{\kappa_{\rm a}}^\infty}'[\varphi_3G;p]q=w,
\end{equation}
where
\begin{align*}
w:=w_\infty-S^\infty_{\kappa_{\rm a}}[\varphi_3G;p].
\end{align*}
As usual for iterative algorithms, the stopping criteria is necessary to justify the convergence numerically. With regard to our iterative procedure, the relative error estimator is chosen as follows
\begin{equation}
E_k:=\frac{\left\|w_\infty-S^\infty_{\kappa_{\rm a}}[\varphi_3G;p^{(k)}]\right\|_{L^2}} {\big\|w_\infty\big\|_{L^2}}\leq\epsilon \label{relativeerror}
\end{equation}
for some sufficiently small parameter $\epsilon>0$ depending on the noise level, where $p^{(k)}$ is the $k$th approximation of the boundary $\Gamma_D$.

We are now in a position to present the iterative algorithm for the inverse
obstacle scattering problem with phased far-field data as {\bf Algorithm I}.
\begin{table}[ht]
	\begin{tabular}{cp{.8\textwidth}}
		\toprule
		\multicolumn{2}{l}{{\bf Algorithm I:}\quad Iterative algorithm for the phased IAEIP} \\
		\midrule
		Step 1 & Send an incident plane wave ${u}^{\rm
			inc}$ with a fixed wave number $\kappa_{\rm a}$ and a fixed incident direction
		$d\in\Omega$, and then collect the corresponding far-field data
		$u_\infty$ for the scatterer $D$; \\
		Step 2 & Select an initial star-like curve $\Gamma^{(0)}$
		for the boundary $\Gamma_D$ and the error tolerance $\epsilon$. Set $k=0$; \\
		Step 3 & For the curve $\Gamma^{(k)}$, compute the densities
		$\varphi_l$, $l=1,2,3$ from \eqref{paraboundaryIE};
		\\
		Step 4 & Solve \eqref{linear paradata eqn} to obtain the
		updated approximation $\Gamma^{(k+1)}:=\Gamma^{(k)}+q$ and evaluate the error
		$E_{k+1}$ defined in \eqref{relativeerror}; \\
		Step 5 & If $E_{k+1}\geq\epsilon$, then set $k=k+1$ and go
		to Step 3. Otherwise, the current approximation $\Gamma^{(k+1)}$ is taken to be
		the final reconstruction of $\Gamma_D$. \\
		\bottomrule
	\end{tabular}
\end{table}

\subsubsection{Discretization} 

We use the Nystr\"{o}m-type method which is described in Section 5 for the full discretizations of \eqref{paraboundaryIE}. Now we discuss the discretization of the linearized equation \eqref{linear paradata	eqn} and obtain the update by using the least squares with Tikhonov regularization
\cite{Kress-IP2003}. As for a finite dimensional space to approximate the
radial function $r$ and its update $\Delta r$, we choose the space of
trigonometric polynomials of the form
\begin{equation*}
\Delta r(\varsigma)=\sum_{m=0}^M\alpha_m\cos{m\varsigma} +\sum_{m=1}^M\beta_m\sin{m\varsigma}. 
\end{equation*}
where the integer $M>1$ is the truncation number. For simplicity, we reformulate the equation \eqref{linear paradata eqn} by introducing the following definitions
\begin{align*}
&L_1(t,\varsigma;\varphi):=-\mathrm{i}\kappa_{\rm a}\gamma_{\kappa_{\rm a}} \exp\left\{-\mathrm{i}\kappa_{\rm a}\Big(c_1\cos t+c_2\sin t+r(\varsigma)\cos(t-\varsigma)\Big)\right\}\cos t~\varphi(\varsigma),\\
&L_2(t,\varsigma;\varphi):=-\mathrm{i}\kappa_{\rm a}\gamma_{\kappa_{\rm a}} \exp\left\{-\mathrm{i} \kappa_{\rm a}\Big(c_1\cos t+c_2\sin t +r(\varsigma)\cos(t-\varsigma)\Big)\right\}\sin t~ \varphi(\varsigma),\\
&L_{3,m}(t,\varsigma;\varphi):=-\mathrm{i}\kappa_{\rm a}\gamma_{\kappa_{\rm a}} \exp\left\{-\mathrm{i}\kappa_{\rm a}\Big(c_1\cos t+c_2\sin t+r(\varsigma) \cos(t-\varsigma)\Big)\right\} \cos(t-\varsigma)\cos m\varsigma ~\varphi(\varsigma),\\
&L_{4,m}(t,\varsigma;\varphi):=-\mathrm{i}\kappa_{\rm a}\gamma_{\kappa_{\rm a}} \exp\left\{-\mathrm{i}\kappa_{\rm a}\Big(c_1\cos t+c_2\sin t+ r(\varsigma)\cos(t-\varsigma)\Big)\right\} \cos(t-\varsigma)\sin m\varsigma ~\varphi(\varsigma).
\end{align*}
Then, by combining \eqref{FreSinfty} and \eqref{linear paradata eqn} together and using trapezoidal rule \eqref{traperule}, we get the discretized linear system
\begin{align}\label{discrelinear}
\sum_{l=1}^2 B_{l}^c(\varsigma_i^{(\tilde{n})})\Delta c_l+\sum_{m=0}^M\alpha_mB_{1,m}^r(\varsigma_i^{(\tilde{n})})+\sum_{m=1}^M\beta_mB_{2,m}^r(\varsigma_i^{(\tilde{n})})= w(\varsigma_i^{(\tilde{n})})
\end{align}
to determine the real coefficients $\Delta c_1$, $\Delta c_2$, $\alpha_m$ and $\beta_m$, where $\varsigma_i^{(\tilde{n})}:=\pi i/\tilde{n},~i = 0,1,\cdots, 2\tilde{n}-1$ are the far-field observation points in $[0, 2\pi]$,
\begin{align*}
B_l^c(\varsigma_i^{(\tilde{n})})=\frac{\pi}{n}\sum_{j=0}^{2n-1}L_l(\varsigma_i^{(\tilde{n})},\varsigma_j^{(n)};\varphi_3G)
\end{align*}
for $l=1, 2$, and
$$
B_{1,m}^r(\varsigma_i^{(\tilde{n})})=\frac{\pi}{n}\sum_{j=0}^{2n-1}L_{3,m}(\varsigma_i^{(\tilde{n})},\varsigma_j^{(n)};\varphi_3G), \qquad 
B_{2,m}^r(\varsigma_i^{(\tilde{n})})=\frac{\pi}{n}\sum_{j=0}^{2n-1}L_{4,m}(\varsigma_i^{(\tilde{n})},\varsigma_j^{(n)};\varphi_3G).
$$

In general,  $2M+1\ll 2\tilde{n}$, and due to the ill-posedness, the overdetermined
system \eqref{discrelinear} is solved via the Tikhonov regularization. Hence the
linear system \eqref{discrelinear} is reformulated into minimizing the following
function

\begin{align}
\sum_{i=0}^{2\tilde{n}-1}\Bigg|\sum_{l=1}^2 B_{l}^c(\varsigma_i^{(\tilde{n})})\Delta c_l&+\sum_{m=0}^M\alpha_mB_{1,m}^r(\varsigma_i^{(\tilde{n})})+\sum_{m=1}^M\beta_mB_{2,m}^r(\varsigma_i^{(\tilde{n})})-w(\varsigma_i^{(\tilde{n})})\Bigg|^2 \nonumber \\ 
&+\lambda\bigg(|\Delta c_1|^2+|\Delta c_2|^2+2\pi\Big[\alpha_0^2+\frac{1}{2}\sum_{m=1}^M(1+m^2)^2(\alpha_m^2+\beta_m^2)\Big]\bigg) \label{RLHuygens3}
\end{align}
with $H^2$ penalty term, where $\lambda>0$ is a regularization parameter. It is easy to show that the minimizer of \eqref{RLHuygens3} is the solution of the system
\begin{align}\label{EqualRLHuygens3}
(\lambda\widetilde{I}+ \Re(\widetilde{B}^*\widetilde{B}))\xi=\Re(\widetilde{B}^*\widetilde{w}),
\end{align}
where
\begin{align*}
\noindent
\widetilde{B}&=\Big(B_1^c, B_2^c, B_{1,0}^r, \cdots, B_{1,M}^r,B_{2,1}^r,\cdots,B_{2,M}^r\Big)_{(2\tilde{n})\times(2M+3)},\\
\xi&=(\Delta c_1, \Delta c_2, \alpha_0,\cdots, \alpha_M,\beta_1,\cdots, \beta_M)^\top, \\
\widetilde{I}&=\mathrm{diag}\{1, 1, 2\pi, \pi(1+1^2)^2, \cdots, \pi(1+M^2)^2,
\pi(1+1^2)^2, \cdots, \pi(1+M^2)^2\}, \\
\widetilde{w}&=(w(\varsigma_0^{(\tilde{n})}),\cdots, w(\varsigma_{2\tilde{n}-1}^{(\tilde{n})}))^\top.
\end{align*}
Thus, we obtain the new approximation 
$$
p^{\rm new}(\hat{x})=(c+\Delta c)+\Big(r(\hat{x})+\Delta r(\hat{x})\Big)\hat{x}.
$$

\subsection{Iterative method for the phaseless IAEIP}

To incorporate the reference ball, we find the solution of
\eqref{HelmholtzDec} with $D$ replaced by $D\cup B$ in the form of single-layer
potentials with densities $g_{1,\sigma}$,  $g_{2,\sigma}$ and $g_{3,\sigma}$:
\begin{align}
&\phi(x)=\sum_\sigma\int_{\Gamma_\sigma}\Phi(x,y;\kappa_{\rm	p})g_{1,\sigma}(y)\mathrm{d}s(y), \label{singlelayerDB1}\\
&\psi(x)=\sum_\sigma\int_{\Gamma_\sigma}\Phi(x,y;\kappa_{\rm	s})g_{2,\sigma}(y)\mathrm{d}s(y),  \label{singlelayerDB2}\\
&u^s(x)=\sum_\sigma\int_{\Gamma_\sigma}\Phi(x,y;\kappa_{\rm	a})g_{3,\sigma}(y)\mathrm{d}s(y),  \label{singlelayerDB3}
\end{align}
for $x\in\mathbb{R}^2\setminus\Gamma_{D\cup B}$, where $\sigma=D, B$.

\begin{table} \label{partable}
	\caption{Parametrization of the exact boundary curves.}
	\begin{tabular}{lll}
		\toprule  
		Type           &Parametrization\\
		\midrule  
		Apple-shaped   & $p_D(t)=\displaystyle\frac{0.55(1+0.9\cos{t}+0.1\sin{2t})}{1+0.75\cos{t}}(\cos{t},\sin{t}), \quad t\in [0,2\pi]$ \\
		~\\
		Peanut-shaped  & 
		$p_D(t)=0.65\sqrt{0.25\cos^2{t}+\sin^2{t}}(\cos{t},\sin{t}), \quad
		t\in[0,2\pi]$\\   
		\bottomrule 
	\end{tabular}
\end{table}

\begin{figure}
	\centering 
	\subfigure[Reconstruction with $1\%$ noise, $\epsilon=0.2$ ]
	{\includegraphics[width=0.4\textwidth]{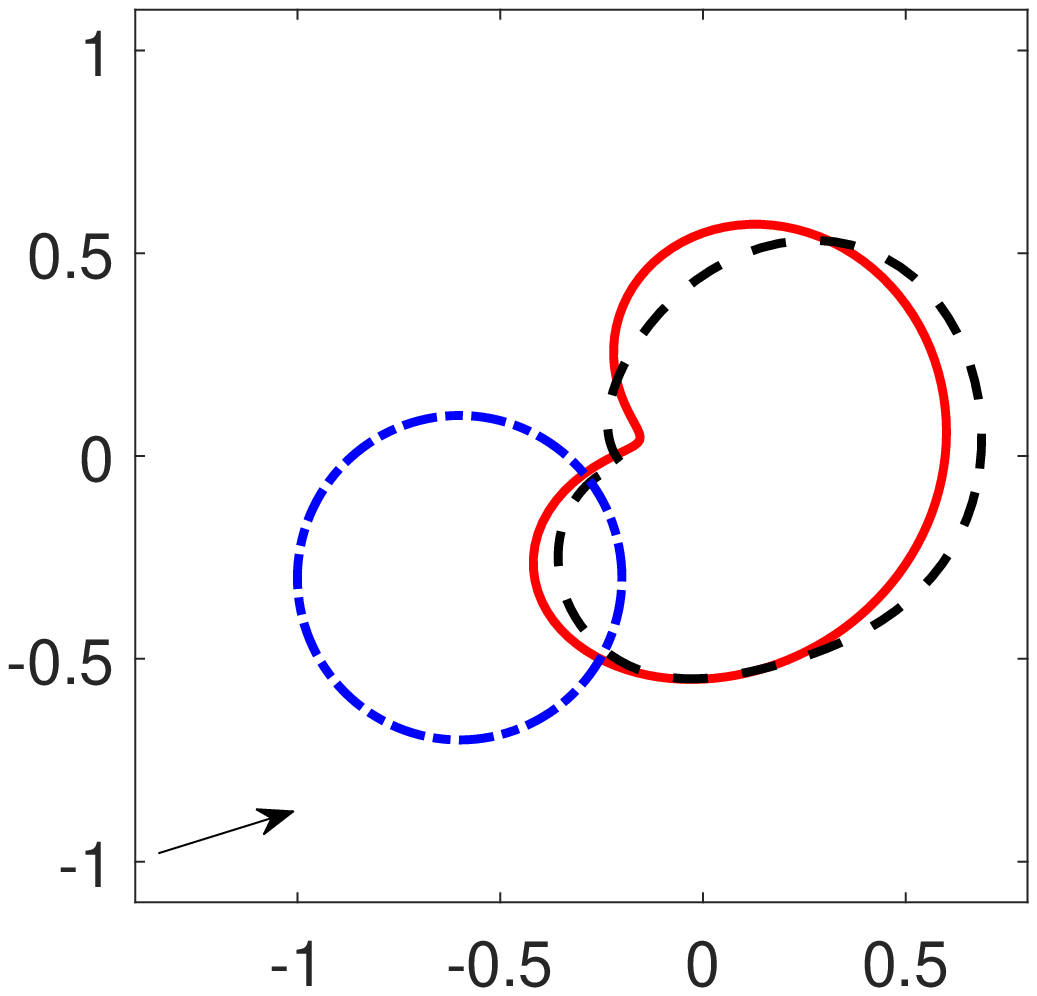}}
	\subfigure[Relative error with $1\%$ noise]
	{\includegraphics[width=0.4\textwidth]{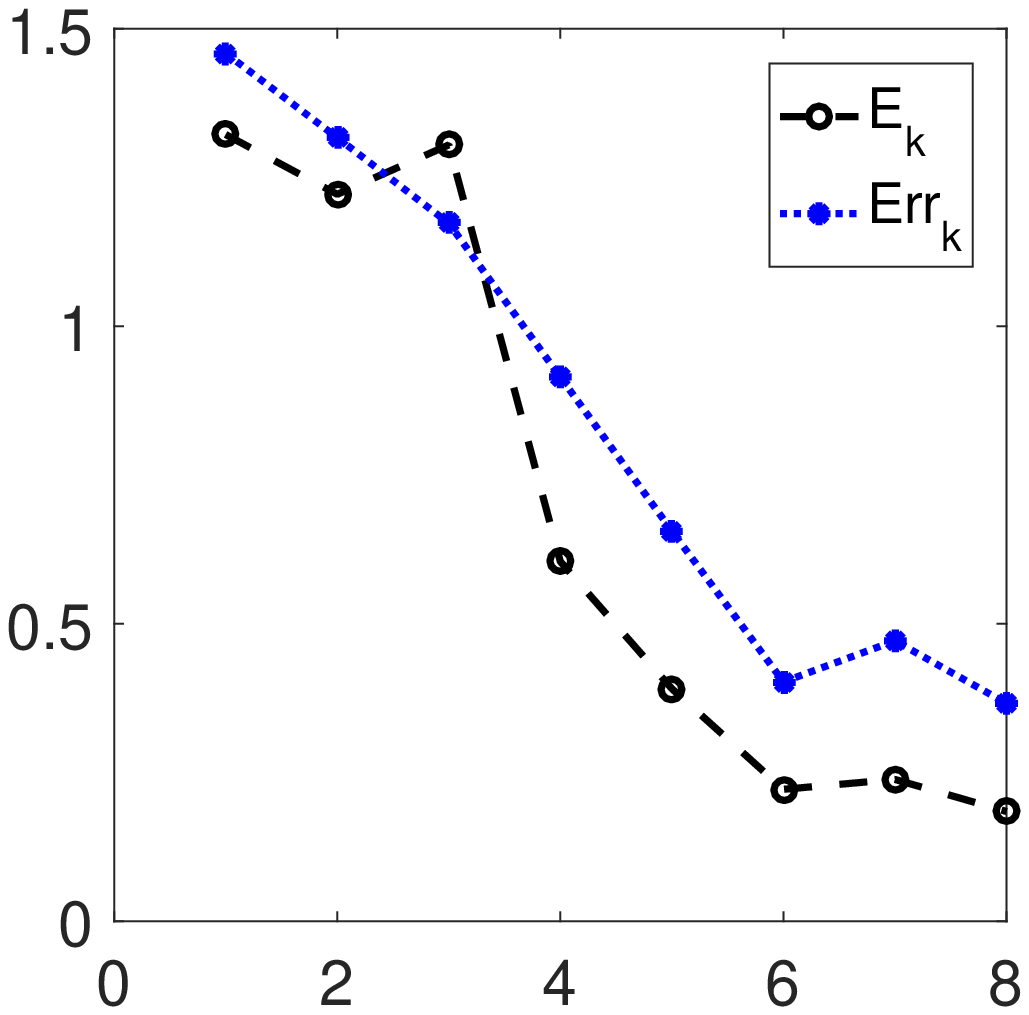}} 
	\subfigure[Reconstruction with $5\%$ noise, $\epsilon=0.2$]
	{\includegraphics[width=0.4\textwidth]{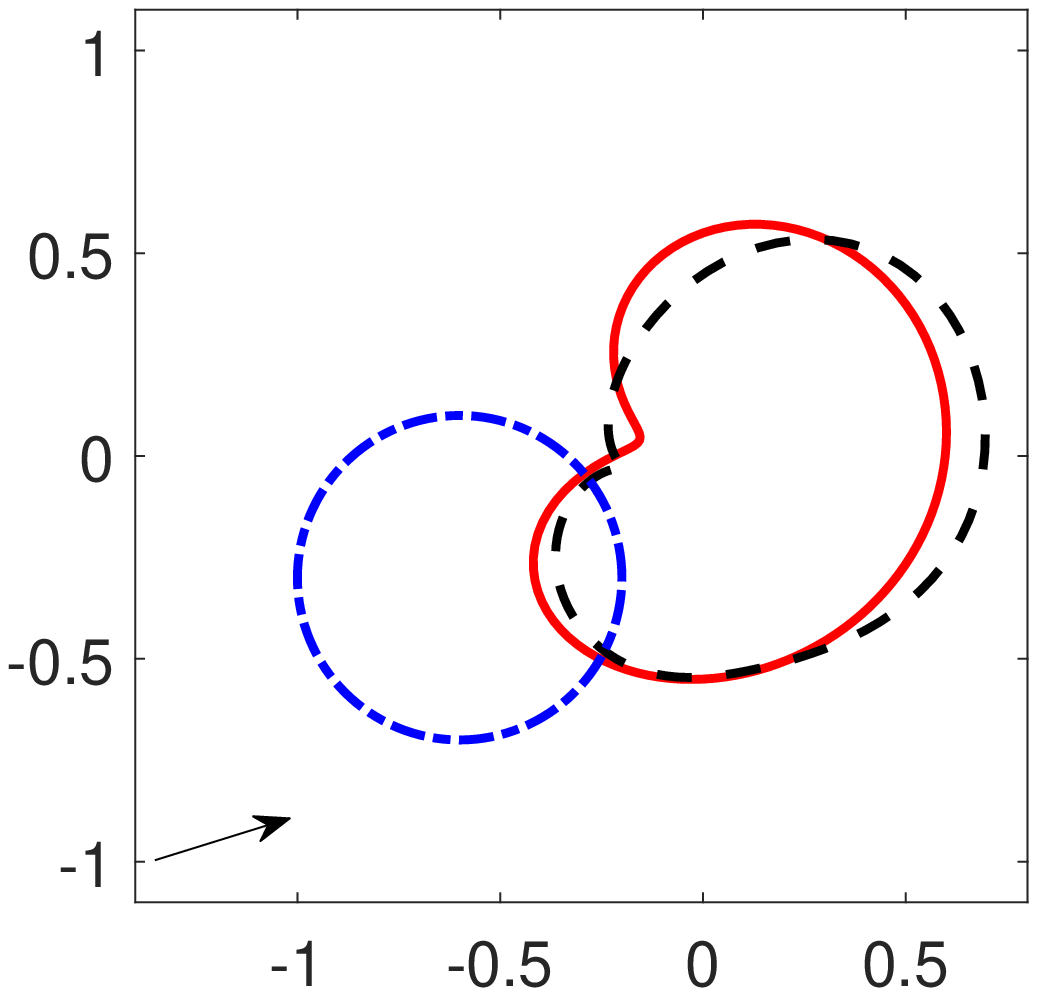}}
	\subfigure[Relative error with $5\%$ noise]
	{\includegraphics[width=0.4\textwidth]{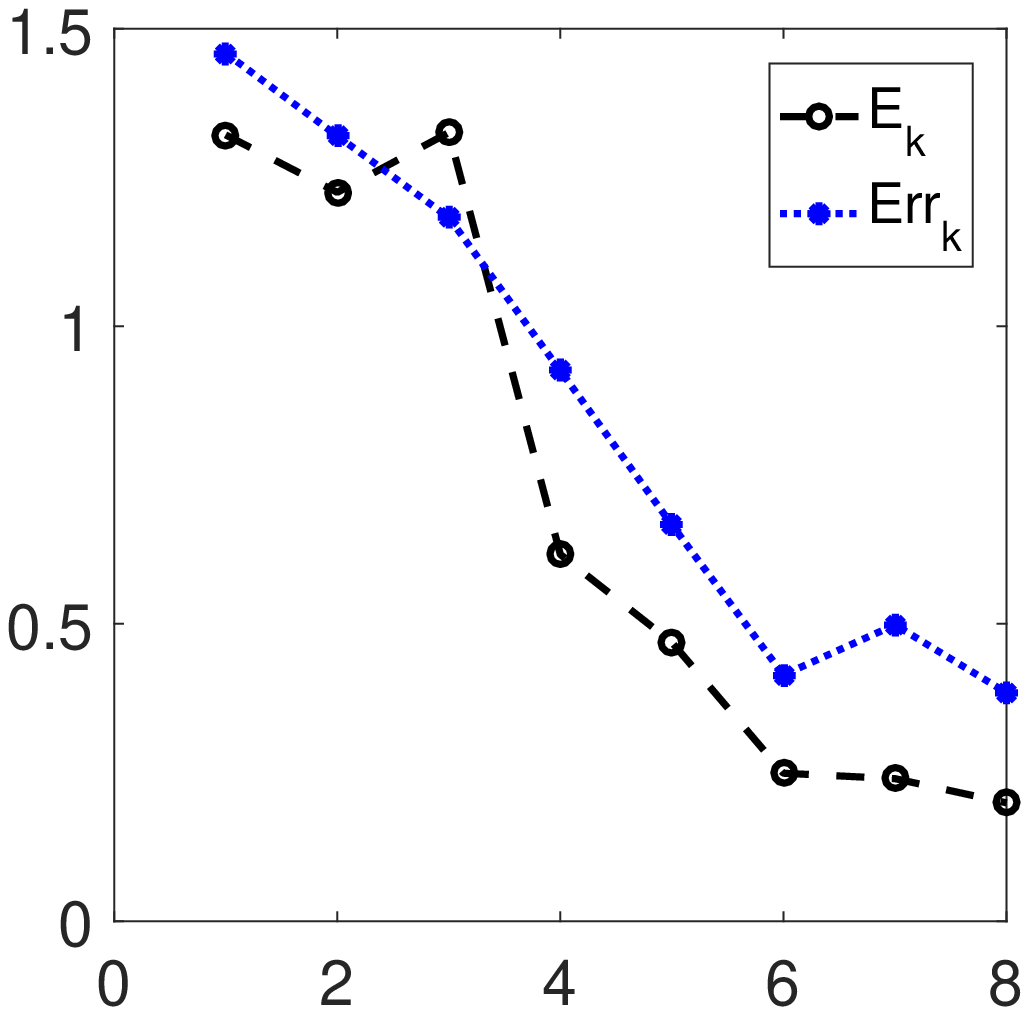}}
	\caption{Reconstructions of an apple-shaped obstacle with phased data at different
		levels of noise (see example 1). The initial guess is given by $(c_1^{(0)},c_2^{(0)})=(-0.6,
		-0.3), r^{(0)}=0.4$ and the incident angle $\theta=\pi/8$.}\label{IOSP-2}
\end{figure}

\begin{figure}
	\centering 
	\subfigure[Reconstruction with $1\%$ noise, $\epsilon=0.25$ ]
	{\includegraphics[width=0.4\textwidth]{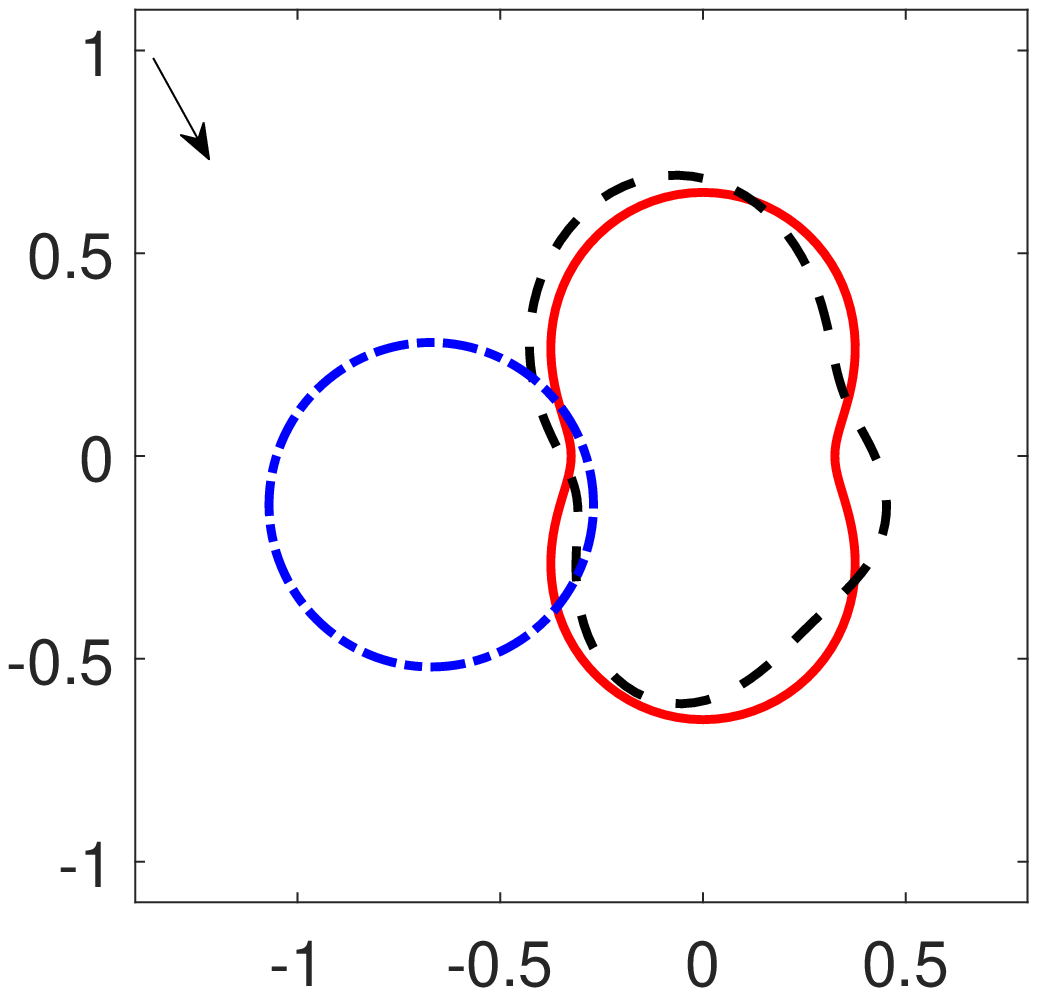}}
	\subfigure[Relative error with $1\%$ noise]
	{\includegraphics[width=0.4\textwidth]{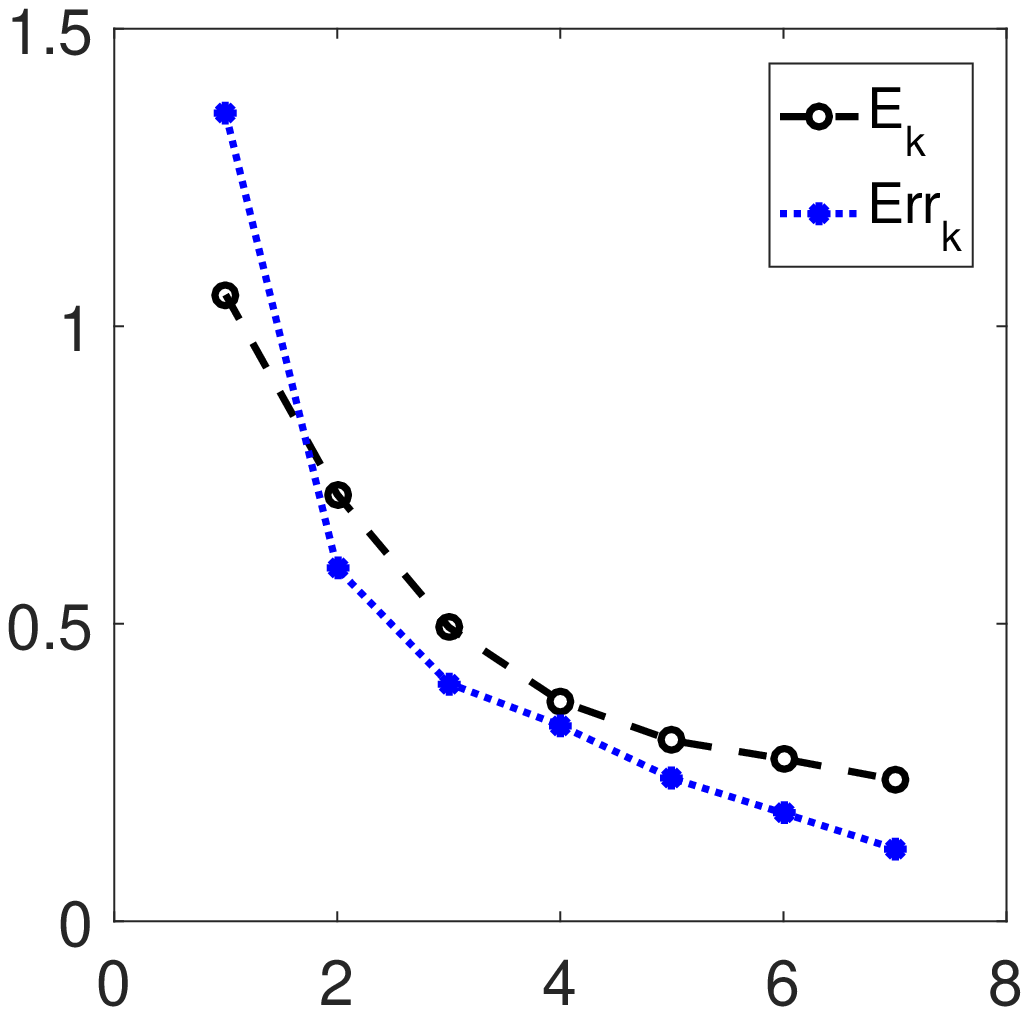}} 
	\subfigure[Reconstruction with $5\%$ noise, $\epsilon=0.25$]
	{\includegraphics[width=0.4\textwidth]{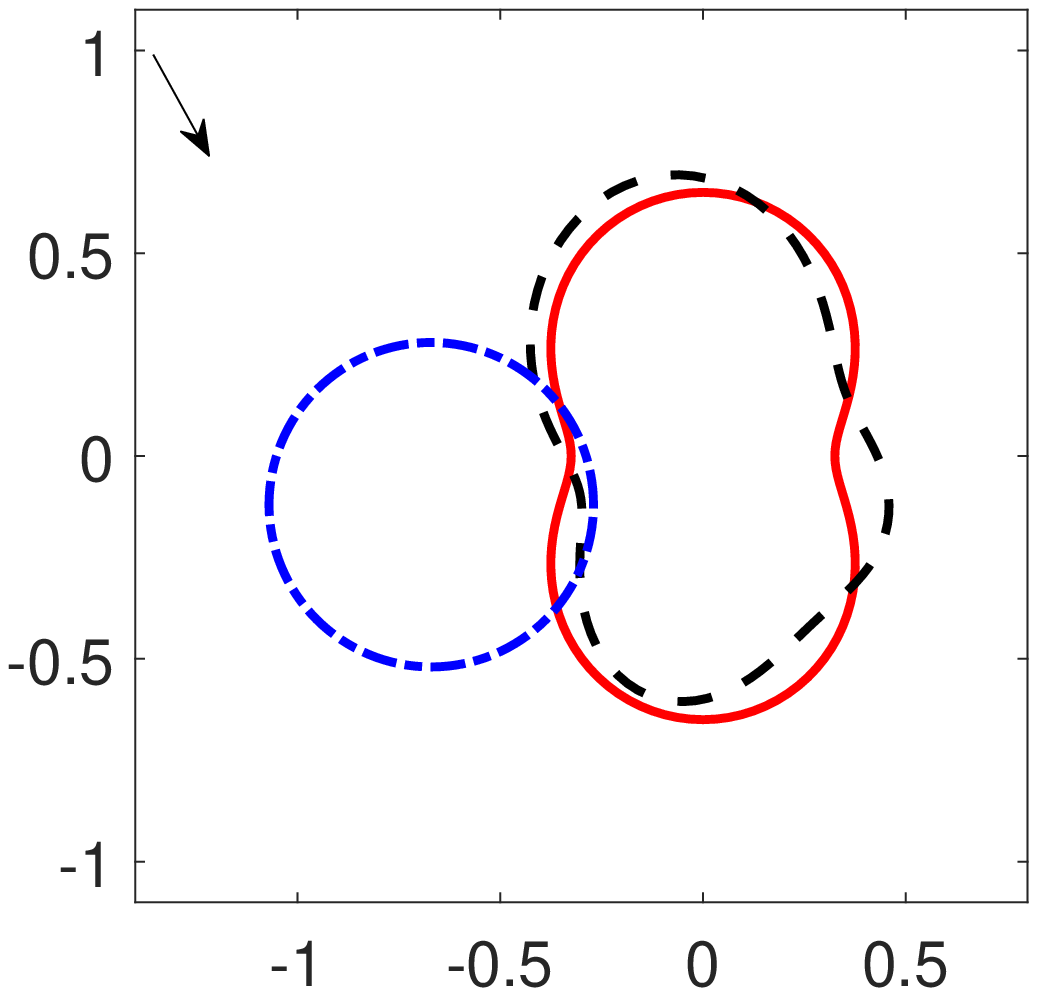}}
	\subfigure[Relative error with $5\%$ noise]
	{\includegraphics[width=0.4\textwidth]{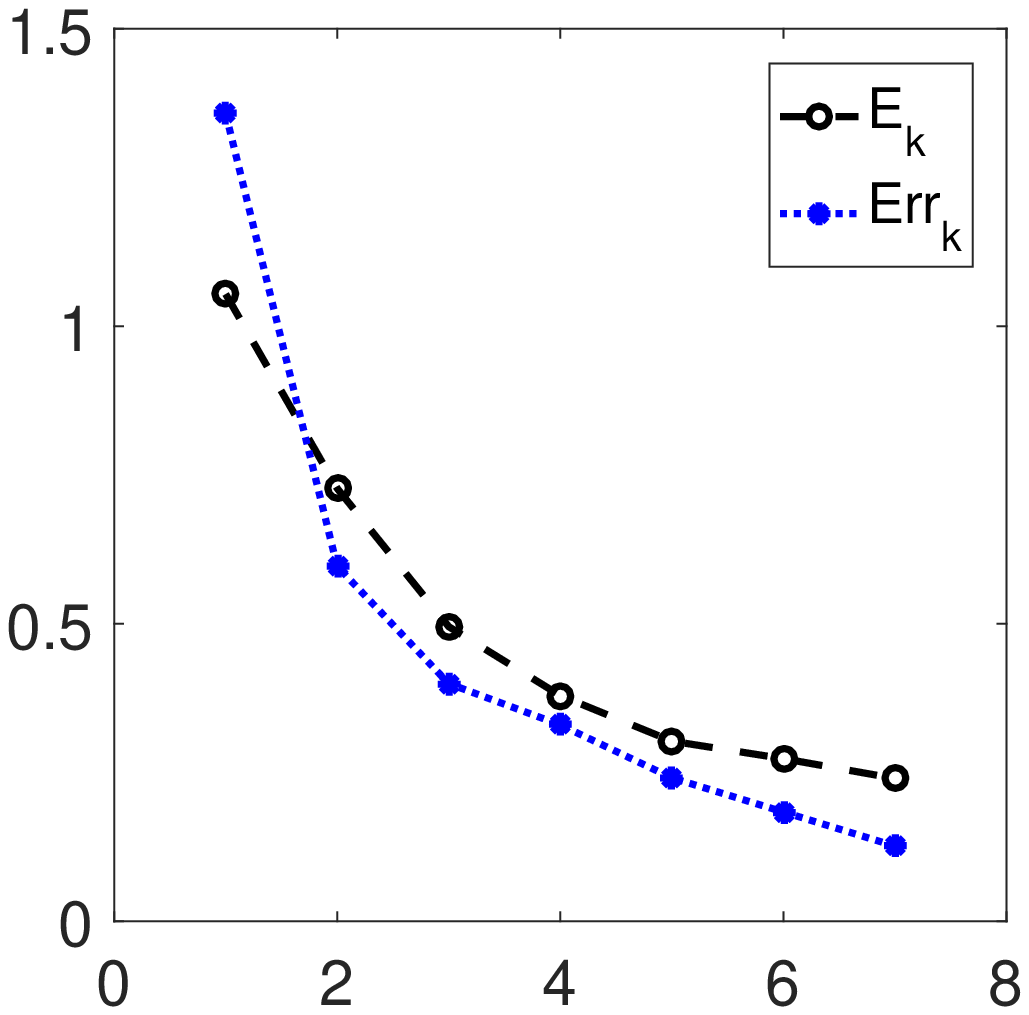}}
	\caption{Reconstructions of a peanut-shaped obstacle with phased data at different
		levels of noise (see example 1). The initial guess is given by $(c_1^{(0)},c_2^{(0)})=(-0.67,
		-0.12), r^{(0)}=0.4$ and the incident angle $\theta=13\pi/8$.}\label{IOSP-5}
\end{figure}

\begin{figure}
	\centering 
	\subfigure[]
	{\includegraphics[width=0.4\textwidth]{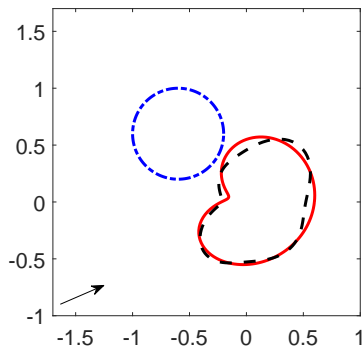}}
	\subfigure[]
	{\includegraphics[width=0.4\textwidth]{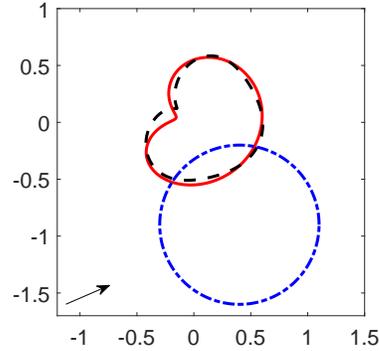}}
	\caption{Reconstructions of an apple-shaped obstacle with different
		initial guesses, where $1\%$ noise is added and the incident angle
		$\theta=\pi/6$. (a) $(c_1^{(0)},c_2^{(0)})=(-0.6, 0.6)$, $r^{(0)}=0.4$, $\epsilon=0.2$; (b) $(c_1^{(0)},c_2^{(0)})=(0.4, -0.9)$, $r^{(0)}=0.7$, $\epsilon=0.15$ (see example 1).}\label{IOSP-3}
\end{figure}

Furthermore, we introduce following integral operators, i.e.,
\begin{align*}
&S_\kappa^{\sigma,\varrho}[g](x)=2\int_{\Gamma_\sigma} \Phi(x,y;\kappa)g(y)\mathrm{d}s(y), \quad x\in\Gamma_\varrho, \\
&S_{\kappa,\sigma}^\infty [g](\hat{x})=\gamma_\kappa\int_{\Gamma_\sigma}e^{-\mathrm{i}\kappa \hat{x}\cdot y}g(y)\mathrm{d}s(y), \qquad\hat{x}\in\Omega,\\
&K_\kappa^{\sigma,\varrho} [g](x)=2\int_{\Gamma_\sigma}\frac{\partial\Phi(x,y;\kappa)} {\partial\nu(x)}g(y)\mathrm{d}s(y), \qquad x\in\Gamma_\varrho,\\
&H_\kappa^{\sigma,\varrho} [g](x)=2\int_{\Gamma_\sigma}\frac{\partial\Phi(x,y;\kappa)} {\partial\tau(x)}g(y)\mathrm{d}s(y), \qquad x\in\Gamma_\varrho,
\end{align*}
where $\varrho=D, B$. Then, letting $x\in\mathbb{R}^2\setminus\overline{D\cup B}$ tend to boundaries $\Gamma_D$ and $\Gamma_B$ respectively in \eqref{singlelayerDB1}--\eqref{singlelayerDB3}, and making use of the jump relation of the single-layer potentials and the boundary condition of \eqref{HelmholtzDec} for $\Gamma_{D\cup B}$, we can readily deduce the following field equations in the operator form on $\Gamma_D$:
\begin{align}
2f_1=&-\mu\kappa^2_{\rm p}\nu_D^\top S^{D,D}_{\kappa_{\rm p}}\big[\langle\nu_D,\nu_D\rangle g_{1,D}\big]\nu_D +\mu\nu_D^\top K^{D,D}_{\kappa_{\rm p}}\big[\tau_D\partial_{\tau_D} g_{1,D}+g_{1,D}\partial_{\tau_D}\tau_D\big] \nonumber\\
&-\mu\nu_D^\top H^{D,D}_{\kappa_{\rm p}}\big[\nu_D\partial_{\tau_D} g_{1,D}+g_{1,D}\partial_{\tau_D}\nu_D\big]+\mu\kappa^2_{\rm s}\nu_D^\top S^{D,D}_{\kappa_{\rm s}}\big[\langle\tau_D,\nu_D\rangle g_{2,D}\big]\nu_D
\nonumber\\
&+\mu\nu_D^\top K^{D,D}_{\kappa_{\rm s}}\big[\nu_D\partial_{\tau_D} g_{2,D}+g_{2,D}\partial_{\tau_D}\nu_D\big]+\mu\nu_D^\top H^{D,D}_{\kappa_{\rm s}}\big[\tau_D\partial_{\tau_D} g_{2,D}+g_{2,D}\partial_{\tau_D}\tau_D\big] \nonumber\\
&+\mu\nu_D\cdot\partial_{\nu_D}\nabla S^{B,D}_{\kappa_{\rm p}}[g_{1,B}]+\mu\nu_D\cdot\partial_{\nu_D}\boldsymbol{\rm curl}~S^{B,D}_{\kappa_{\rm s}}[g_{2,B}] -(\lambda+\mu)\kappa_{\rm p}^2\big(S^{D,D}_{\kappa_{\rm p}}[g_{1,D}]+S^{B,D}_{\kappa_{\rm p}}[g_{1,B}]\big) \nonumber\\
&+S^{D,D}_{\kappa_{\rm a}}[g_{3,D}]+S^{B,D}_{\kappa_{\rm a}}[g_{3,B}]
+\mu(\nu_D\cdot\partial_{\tau_D}\tau_D)g_{1,D}+\mu(\nu_D\cdot\partial_{\tau_D}\nu_D)g_{2,D}+\mu\partial_{\tau_D} g_{2,D}, 
\label{boundaryIE1}\\
2f_2=&-\kappa^2_{\rm p}\tau_D^\top S^{D,D}_{\kappa_{\rm p}}\big[\langle\nu_D,\nu_D\rangle g_{1,D}\big]\nu_D+\tau_D^\top K^{D,D}_{\kappa_{\rm p}}\big[\tau_D\partial_{\tau_D} g_{1,D}+g_{1,D}\partial_{\tau_D}\tau_D\big]  \nonumber\\
&-\tau_D^\top H^{D,D}_{\kappa_{\rm p}}\big[\nu_D\partial_{\tau_D} g_{1,D}+g_{1,D}\partial_{\tau_D}\nu_D\big]+\kappa^2_{\rm s}\tau_D^\top S^{D,D}_{\kappa_{\rm s}}\big[\langle\tau_D,\nu_D\rangle g_{2,D}\big]\nu_D
\nonumber\\
&+\tau_D^\top K^{D,D}_{\kappa_{\rm s}}\big[\nu_D\partial_{\tau_D} g_{2,D}+g_{2,D}\partial_{\tau_D}\nu_D\big]+\tau_D^\top H^{D,D}_{\kappa_{\rm s}}\big[\tau_D\partial_{\tau_D} g_{2,D}+g_{2,D}\partial_{\tau_D}\tau_D\big] 
\nonumber\\
&+\tau_D\cdot\partial_{\nu_D}\nabla S^{B,D}_{\kappa_{\rm p}}[g_{1,B}]+\tau_D\cdot\partial_{\nu_D}\boldsymbol{\rm curl}~S^{B,D}_{\kappa_{\rm s}}[g_{2,B}] \nonumber\\ &+(\tau_D\cdot\partial_{\tau_D}\tau_D)g_{1,D}+\partial_{\tau_D} g_{1,D} +(\tau_D\cdot\partial_{\tau_D}\nu_D)g_{2,D}, 
\label{boundaryIE2}\\
2f_3=&K^{D,D}_{\kappa_{\rm p}}[g_{1,D}]+K^{B,D}_{\kappa_{\rm p}}[g_{1,B}]+H^{D,D}_{\kappa_{\rm s}}[g_{2,D}]+H^{B,D}_{\kappa_{\rm s}}[g_{2,B}]-\big(K^{D,D}_{\kappa_{\rm a}}[g_{3,D}]+K^{B,D}_{\kappa_{\rm a}}[g_{3,B}]\big)/(\omega^2\rho_{\rm a}) \nonumber\\
&+g_{1,D}+g_{3,D}/(\omega^2\rho_{\rm a}),
\label{boundaryIE3}
\end{align}
and on $\Gamma_B$ the field equations are the same as above with superscript/subscript $D$ and $B$ interchanged. The phaseless data equation is given by
\begin{align}\label{phadata eqn}
\bigg|\sum_\sigma S^\infty_{\kappa_{\rm a},\sigma}[g_{3,\sigma}]\bigg|^2=|u_\infty|^2.
\end{align}
In the reconstruction process, the field equations are solved for $g_{1,\sigma}, g_{2,\sigma}$ and $g_{3,\sigma}$ with an approximation of the boundary $\Gamma_D$. Then, by keeping $g_{1,\sigma}, g_{2,\sigma}$ and $g_{3,\sigma}$ fixed, the update of the boundary $\Gamma_D$ can be obtained by linearizing \eqref{phadata eqn} with respect to $\Gamma_D$.

\subsubsection{Parametrization and iterative scheme}

For simplicity, the boundary $\Gamma_D$ and $\Gamma_B$ are assumed to be starlike curves with the parametrized form
\begin{align*}
&\Gamma_D=\{p_D(\hat{x})=c+r(\hat{x})\hat{x}; ~c=(c_1,c_2)^\top,\ \hat{x}\in\Omega\}, \\
&\Gamma_B=\{p_B(\hat{x})=b+R\hat{x}; ~b=(b_1,b_2)^\top,\ \hat{x}\in\Omega\},
\end{align*} 
where $\Omega=\{\hat{x}(t)=(\cos t, \sin t)^\top; ~0\leq t< 2\pi\}$. We assume $G_D(\varsigma):=|p'(\varsigma)|=\sqrt{(r'(\varsigma))^2+r^2(\varsigma)}$ and $G_B=R$ denoted as the Jacobian of the transformation. 

Now, we reformulate the phaseless data equation \eqref{phadata eqn} as the parametrized integral equations
\begin{align}
\Big|\sum_{\sigma}S^\infty_{\kappa_{\rm a},\sigma}[\varphi_{3,\sigma}G_\sigma;p_\sigma]\Big|^2=|u_\infty|^2    \label{phaparadata eqn}
\end{align}
where $\varphi_{3,\sigma}=g_3\circ p_\sigma$, $\sigma=D, B$. By recalling the Fr\'{e}chet derivative operator ${S_\kappa^\infty}'[p;\varphi]q$ in \eqref{FreSinfty}, the linearization of \eqref{phaparadata eqn} leads to
\begin{equation}\label{linear phaparadata eqn}
2\Re\Big(\overline{\sum_\sigma S^\infty_{\kappa_{\rm a},\sigma} [\varphi_{3,\sigma}G_\sigma;p_\sigma]}{S_{\kappa_{\rm a}}^\infty}'[\varphi_{3,D}G_D;p_D]q\Big) =\breve{w},
\end{equation}
where
\begin{align*}
\breve{w}:=|u_\infty|^2-\bigg|\sum_{\sigma}S^\infty_{\kappa_{\rm a},\sigma}[\varphi_{3,\sigma} G_\sigma;p_\sigma]\bigg|^2.
\end{align*}

Again, with regard to our iterative procedure, the relative error estimator is chosen as following
\begin{align}
E_k:=\frac{\displaystyle \left\||u_\infty|^2-\Big|S^\infty_{\kappa_{\rm a},D}[\varphi_{3,D}G_D;p_D^{(k)}]+S^\infty_{\kappa_{\rm a},B} [\varphi_{3,B}G_B;p_B]\Big|^2\right\|_{L^2}} {\Big\||u_\infty|^2\Big\|_{L^2}}\leq\epsilon \label{reference relativeerror}
\end{align}
for some sufficiently small parameter $\epsilon>0$ depending on the noise level, where $p_D^{(k)}$ is the $k$th approximation of the boundary $\Gamma_D$. 

The iterative algorithm for the phaseless IAEIP is given by {\bf Algorithm II}. 

\begin{table}[ht]
	\begin{tabular}{cp{.8\textwidth}}
		\toprule
		\multicolumn{2}{l}{{\bf Algorithm II:}\quad Iterative algorithm for the phaseless IAEIP} \\
		\midrule
		Step 1 & Send an incident plane wave ${u}^{\rm
			inc}$ with a fixed wave number $\kappa_{\rm a}$ and a fixed incident direction
		$d\in\Omega$, and then collect the corresponding far-field data
		$u_\infty$ for the scatterer $D\cup B$; \\
		Step 2 & Select an initial star-like curve $\Gamma^{(0)}$
		for the boundary $\Gamma_D$ and the error tolerance $\epsilon$. Set $k=0$; \\
		Step 3 & For the curve $\Gamma^{(k)}$, compute the densities
		$\varphi_{1,\sigma}$, $\varphi_{2,\sigma}$ and $\varphi_{3,\sigma}$ from field equations; \\
		Step 4 & Solve \eqref{linear phaparadata eqn} to obtain the
		updated approximation $\Gamma^{(k+1)}:=\Gamma^{(k)}+q$ and evaluate the error
		$E_{k+1}$ defined in \eqref{reference relativeerror}; \\
		Step 5 & If $E_{k+1}\geq\epsilon$, then set $k=k+1$ and go
		to Step 3. Otherwise, the current approximation $\Gamma^{(k+1)}$ is served as
		the final reconstruction of $\Gamma_D$. \\
		\bottomrule
	\end{tabular}
\end{table}

\subsubsection{Discretization}

Noting that the kernels of $S^{\sigma,\varrho}_\kappa$, $K^{\sigma,\varrho}_\kappa$ and $ H^{\sigma,\varrho}_\kappa$ are weakly singular when $\sigma=\varrho$. With the help of quadrature rules \eqref{quadrature1}--\eqref{quadrature2_der}, the full discretization of \eqref{boundaryIE1}--\eqref{boundaryIE3} can be handled the same as those described in Section 5.

In addition, we introduce the following definition
\begin{align*}
&M_D(t,\varsigma;\varphi):=\gamma_{\kappa_{\rm a}}\exp\left\{-\mathrm{i}\kappa_{\rm a}\Big(c_1\cos t+c_2\sin t+r(\varsigma) \cos(t-\varsigma)\Big)\right\}\varphi(\varsigma), \\
&M_B(t,\varsigma;\varphi):=\gamma_{\kappa_{\rm a}}\exp\left\{-\mathrm{i}\kappa_{\rm a}\Big(c_1\cos t+c_2\sin t+R \cos(t-\varsigma)\Big)\right\}\varphi(\varsigma),\\
&\sum_{\sigma}S^\infty_{\kappa_{\rm a},\sigma}[\varphi_{3,\sigma}G_\sigma;p_\sigma](\varsigma_i^{(\bar{n})}) =\frac{\pi}{n}\sum_{j=0}^{2n-1}\bigg(M_D(\varsigma_i^{(\bar{n})},\varsigma_j^{(n)};\varphi_{3,D}G_D)+M_B(\varsigma_i^{(\bar{n})},\varsigma_j^{(n)};\varphi_{3,D}G_D)\bigg),
\end{align*}
Then, we get the discretized linear system
\begin{align}\label{phaseless discrelinear}
\sum_{l=1}^2 A_{l}^c(\varsigma_i^{(\bar{n})}) \Delta c_l+ \sum_{m=0}^M\alpha_mA_{1,m}^r(\varsigma_i^{(\bar{n})}) +\sum_{m=1}^M\beta_mA_{2,m}^r(\varsigma_i^{(\bar{n})})
= \breve{w}(\varsigma_i^{(\bar{n})})
\end{align}
to determine the real coefficients $\Delta c_1$, $\Delta c_2$, $\alpha_m$ and $\beta_m$, where
\begin{align*}
A_l^c(\varsigma_i^{(\bar{n})})=2\Re\Big\{\frac{\pi}{n}\overline{\sum_{\sigma}S^\infty_{\kappa_{\rm a},\sigma}[\varphi_{3,\sigma}G_\sigma;p_\sigma](\varsigma_i^{(\bar{n})})}\sum_{j=0}^{2n-1}L_l(\varsigma_i^{(\bar{n})},\varsigma_j^{(n)};\varphi_{3,D}G_D)\Big\}
\end{align*}
for $l=1, 2$, and
\begin{align*}
&A_{1,m}^r(\varsigma_i^{(\bar{n})})=2\Re\Big\{\frac{\pi}{n}\overline{\sum_{\sigma}S^\infty_{\kappa_{\rm a},\sigma}[\varphi_{3,\sigma}G_\sigma;p_\sigma](\varsigma_i^{(\bar{n})})}\sum_{j=0}^{2n-1}L_{3,m}(\varsigma_i^{(\bar{n})},\varsigma_j^{(n)};\varphi_{3,D}G_D)\Big\}, \\
&A_{2,m}^r(\varsigma_i^{(\bar{n})})=2\Re\Big\{\frac{\pi}{n}\overline{\sum_{\sigma}S^\infty_{\kappa_{\rm a},\sigma}[\varphi_{3,\sigma}G_\sigma;p_\sigma](\varsigma_i^{(\bar{n})})}\sum_{j=0}^{2n-1}L_{4,m}(\varsigma_i^{(\bar{n})},\varsigma_j^{(n)};\varphi_{3,D}G_D)\Big\}.
\end{align*}

Similarly, the overdetermined system \eqref{phaseless discrelinear} is also solved via the Tikhonov regularization with $H^2$ penalty term which is introduced in Section 6.1.2.

\begin{figure}
	\centering 
	\subfigure[]
	{\includegraphics[width=0.4\textwidth]{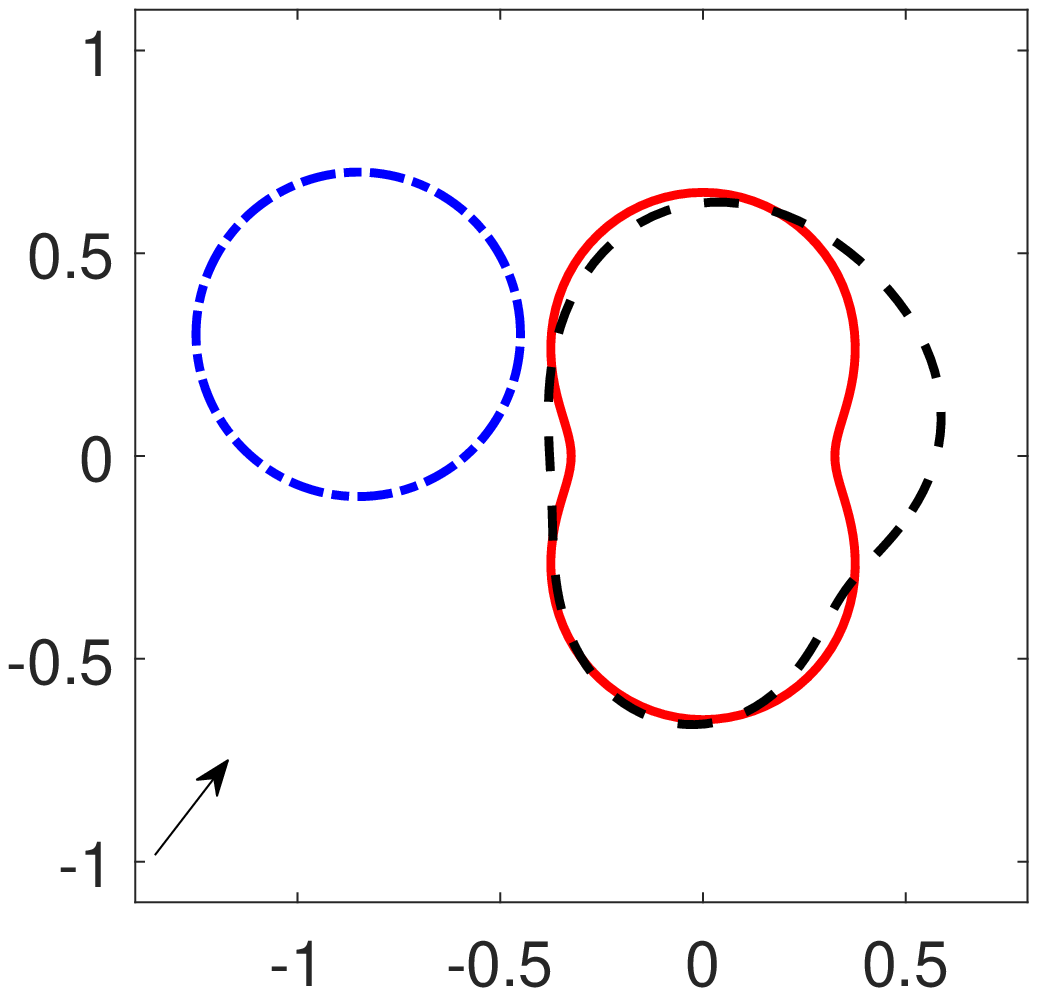}}
	\subfigure[]
	{\includegraphics[width=0.4\textwidth]{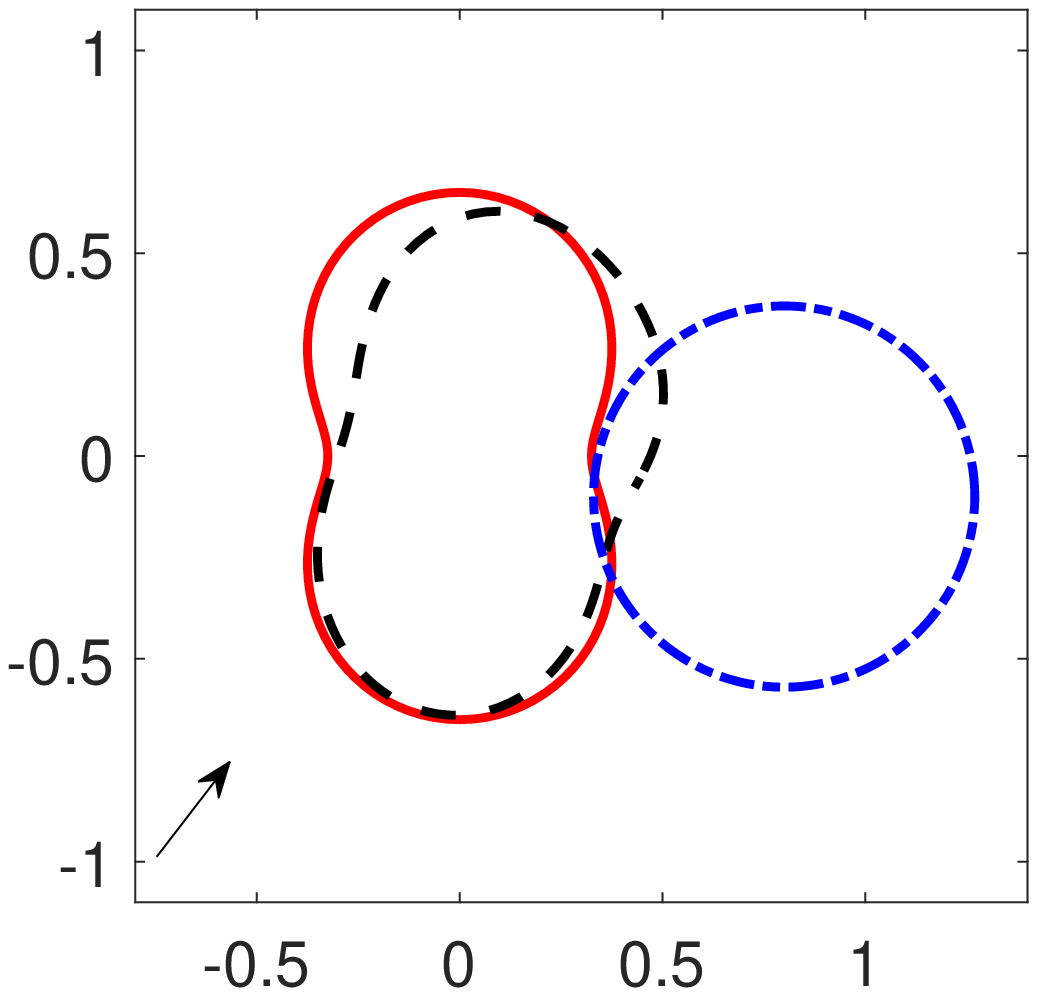}}
	\caption{Reconstructions of a peanut-shaped obstacle with different
		initial guesses, where $1\%$ noise is added and the incident angle
		$\theta=\pi/3$ (see example 1). (a) $(c_1^{(0)},c_2^{(0)})=(-0.85, 0.3)$, $r^{(0)}=0.4$, $\epsilon=0.25$; (b) $(c_1^{(0)},c_2^{(0)})=(0.8, -0.1)$, $r^{(0)}=0.47$, $\epsilon=0.3$.}\label{IOSP-6}
\end{figure}

\begin{figure}
	\centering 
	\subfigure[]
	{\includegraphics[width=0.4\textwidth]{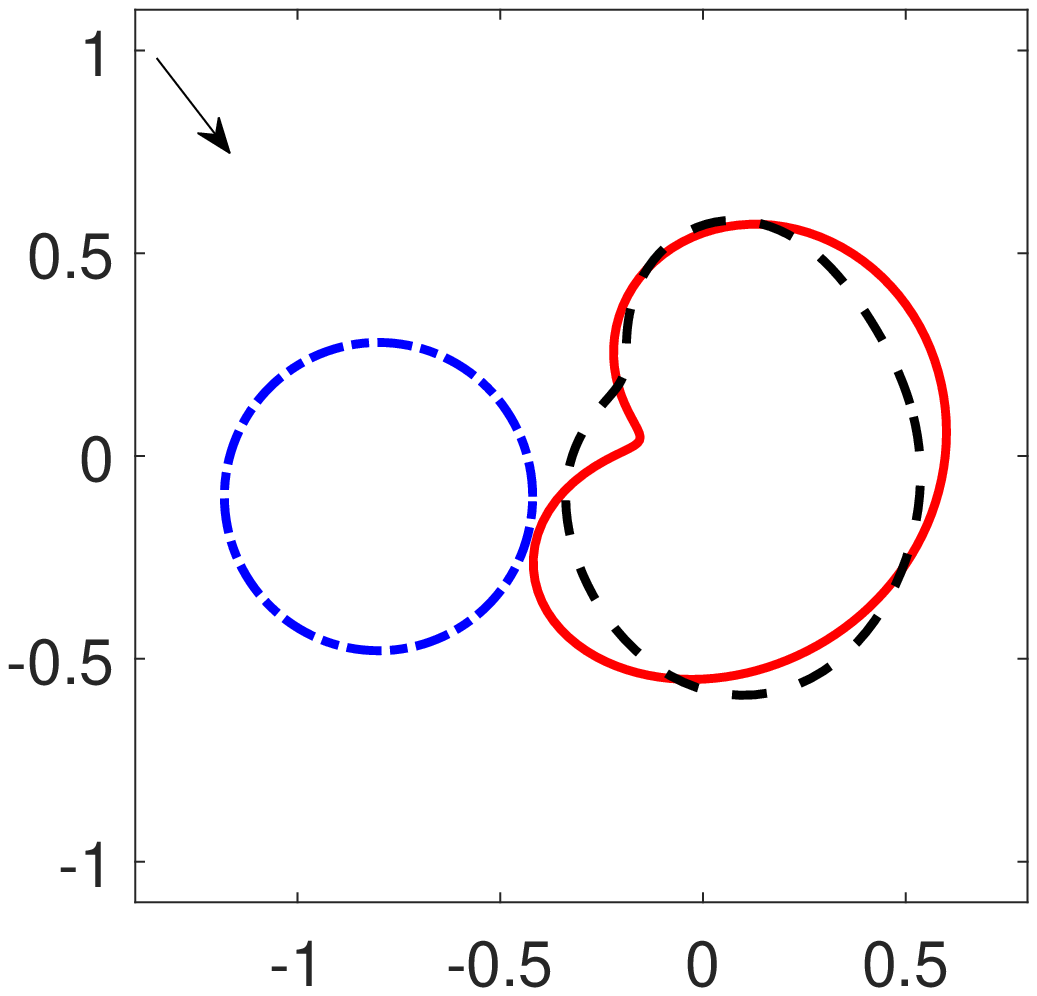}}
	\subfigure[]
	{\includegraphics[width=0.4\textwidth]{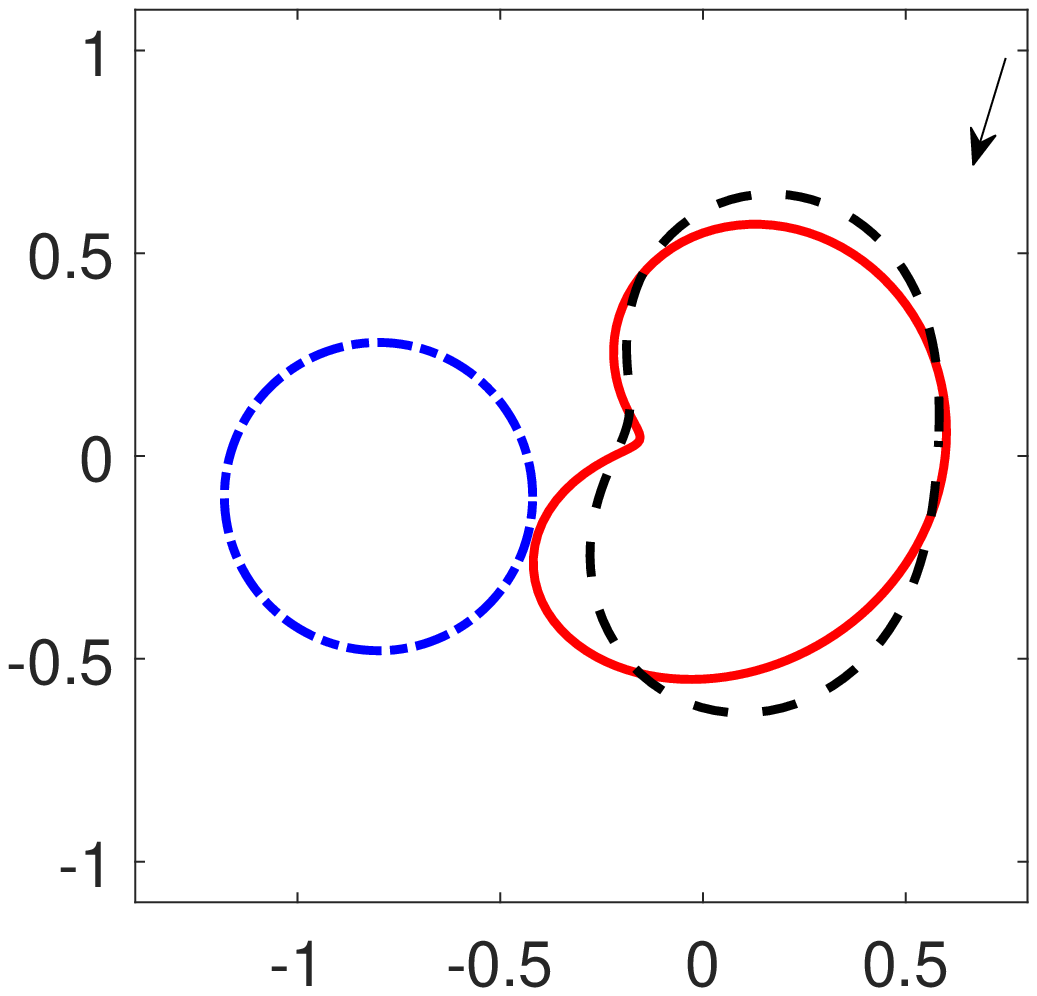}}
	\caption{Reconstructions of an apple-shaped obstacle with different incident
		directions, where $1\%$ noise is added and the initial guess
		is given by $(c_1^{(0)},c_2^{(0)})=(-0.8, -0.1)$, $r^{(0)}=0.38$ (see example 1). (a) incident
		angle $\theta=5\pi/3$, $\epsilon=0.2$; (b) incident angle $\theta=10\pi/7$,
		$\epsilon=0.2$.}\label{IOSP-4}
\end{figure}

\begin{figure}
	\centering 
	\subfigure[]
	{\includegraphics[width=0.4\textwidth]{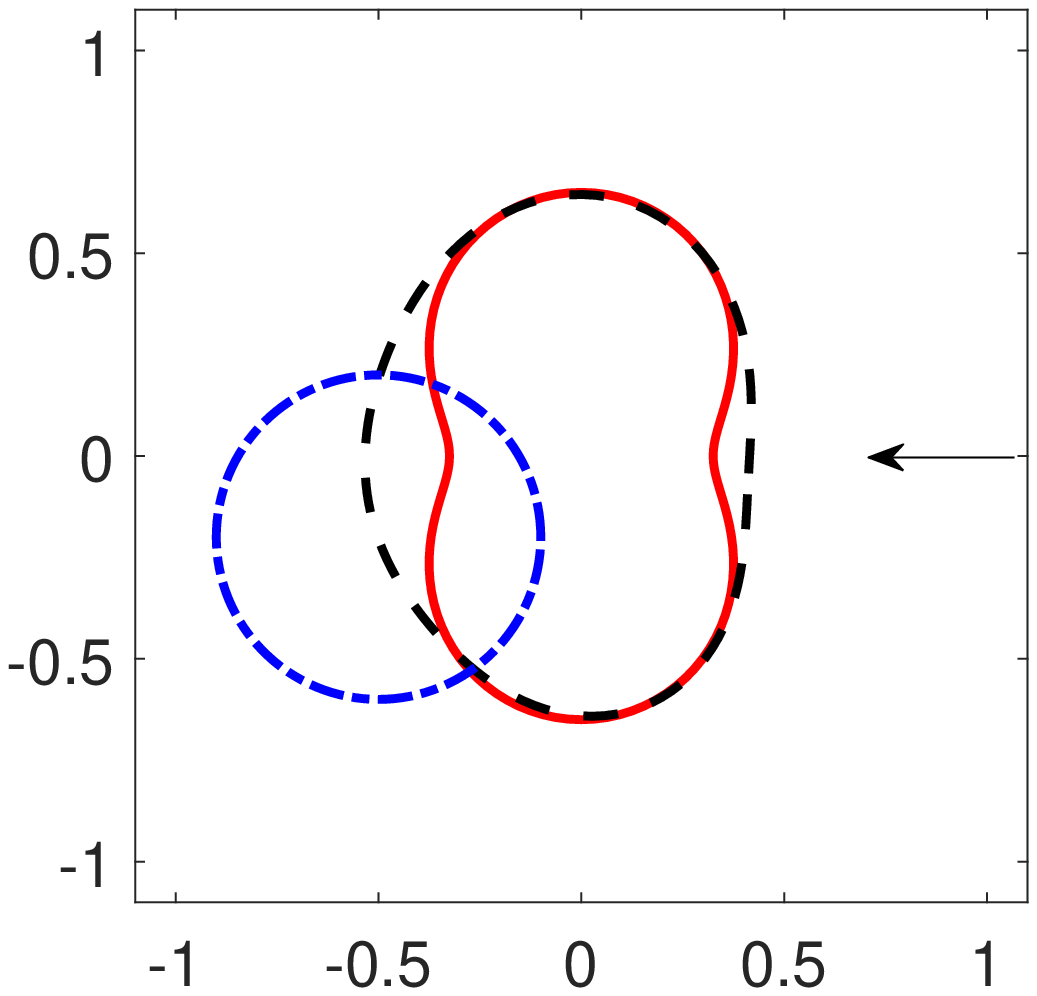}}
	\subfigure[]
	{\includegraphics[width=0.4\textwidth]{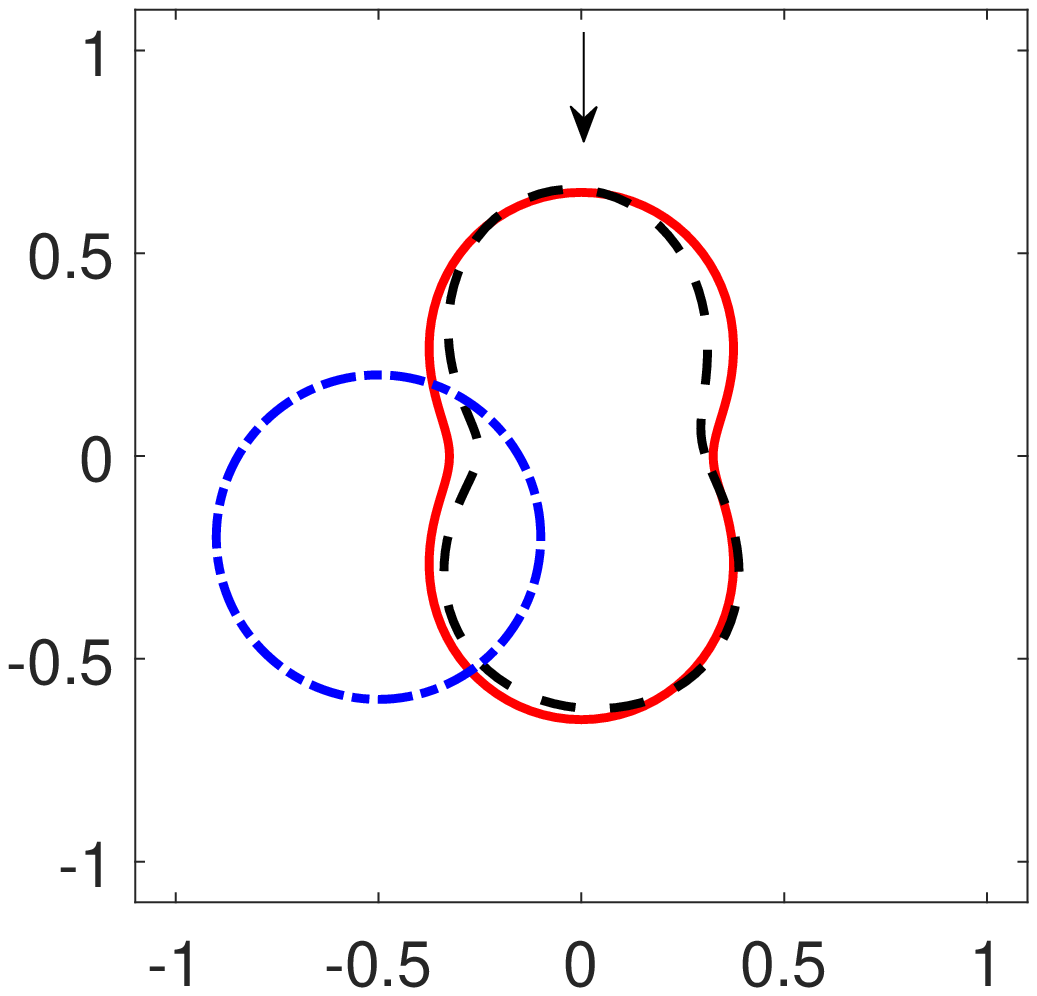}}
	\caption{Reconstructions of a peanut-shaped obstacle with different incident
		directions, where $1\%$ noise is added and the initial guess
		is given by $(c_1^{(0)},c_2^{(0)})=(-0.5, -0.2)$, $r^{(0)}=0.4$ (see example 1). (a) incident angle $\theta=\pi$, $\epsilon=0.2$; (b) incident angle $\theta=3\pi/2$,
		$\epsilon=0.2$.}\label{IOSP-7}
\end{figure}

\begin{figure}
	\centering 
	\subfigure[Recnstruction with $1\%$ noise, $\epsilon=0.05$ ]
	{\includegraphics[width=0.4\textwidth]{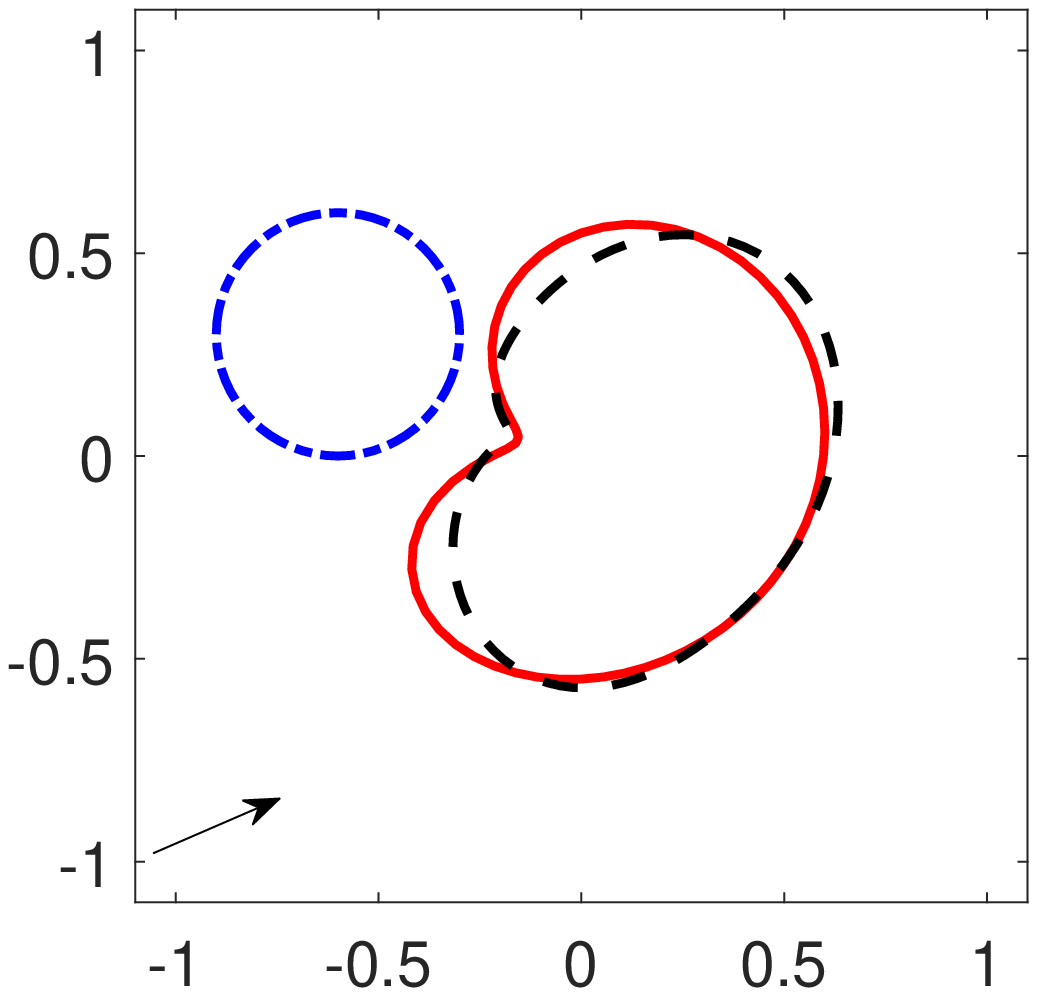}}
	\subfigure[Relative error with $1\%$ noise]
	{\includegraphics[width=0.4\textwidth]{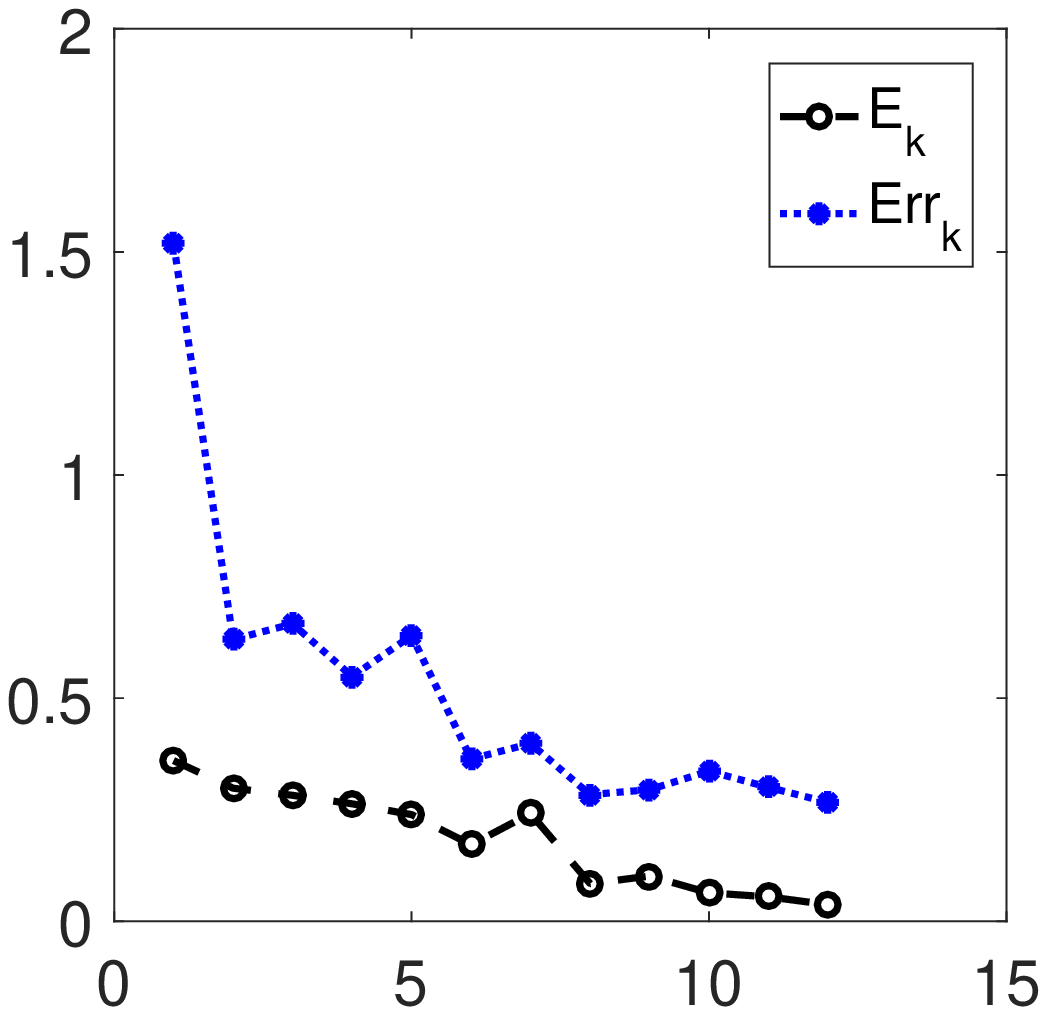}} 
	\subfigure[Reconstruction with $5\%$ noise, $\epsilon=0.1$]
	{\includegraphics[width=0.4\textwidth]{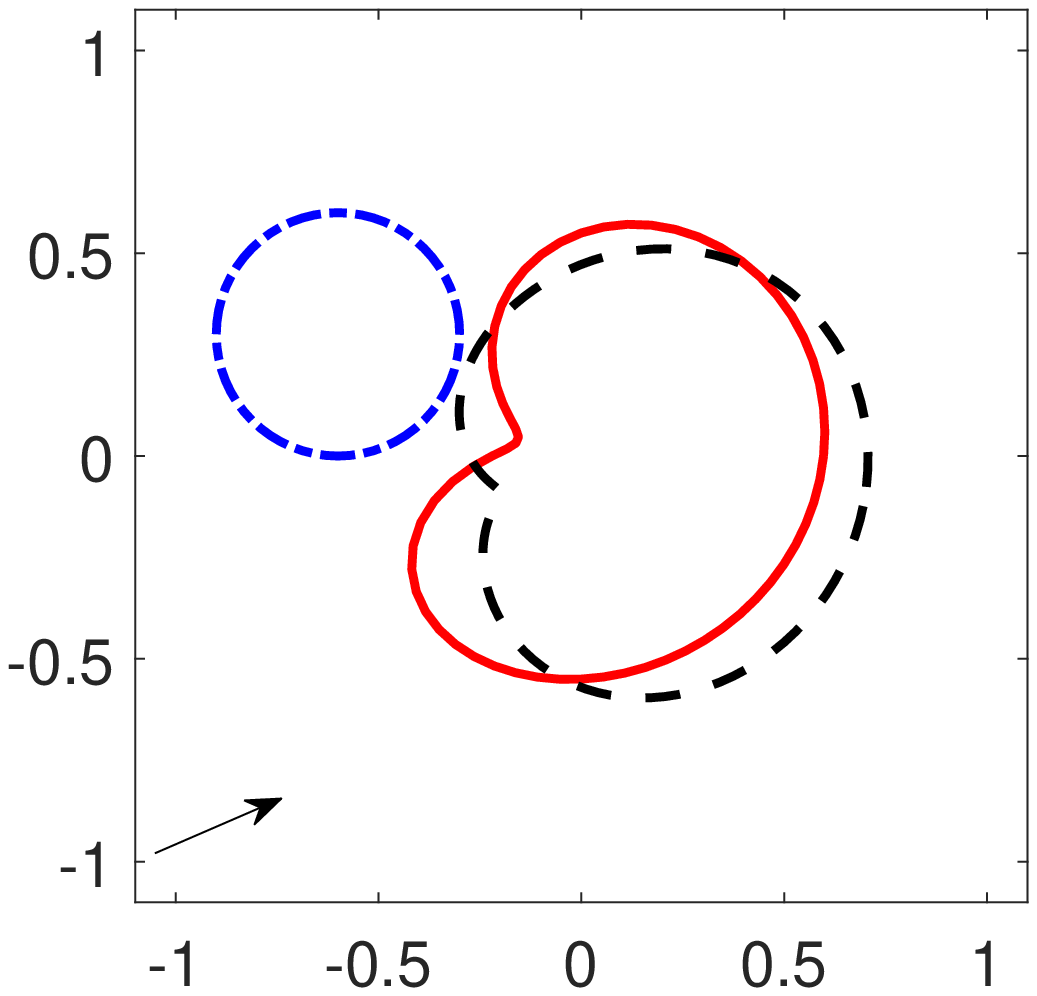}}
	\subfigure[Relative error with $5\%$ noise]
	{\includegraphics[width=0.4\textwidth]{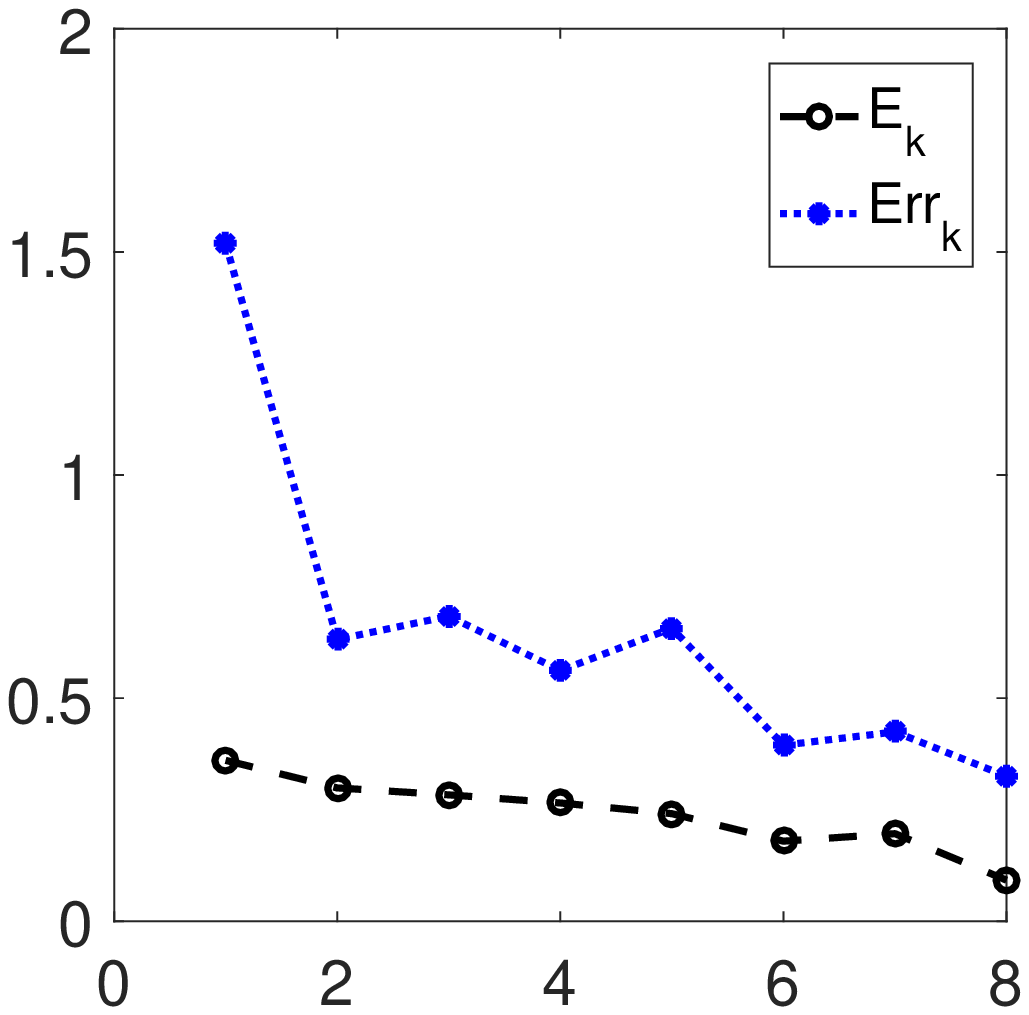}}
	\caption{Reconstructions of an apple-shaped obstacle with different levels of noise by using phaseless data and a reference ball (see example 2). The initial guess is given by 
		$(c_1^{(0)},c_2^{(0)})=(-0.6, 0.3), r^{(0)}=0.3$, the incident angle 
		$\theta=\pi/6$, and the reference ball is $(b_1,b_2)=(6.2, 0),
		R=0.74$.}\label{PhaselessIOSP-8}
\end{figure}

\begin{figure}
	\centering 
	\subfigure[Reconstruction with $1\%$ noise, $\epsilon=0.1$ ]
	{\includegraphics[width=0.4\textwidth]{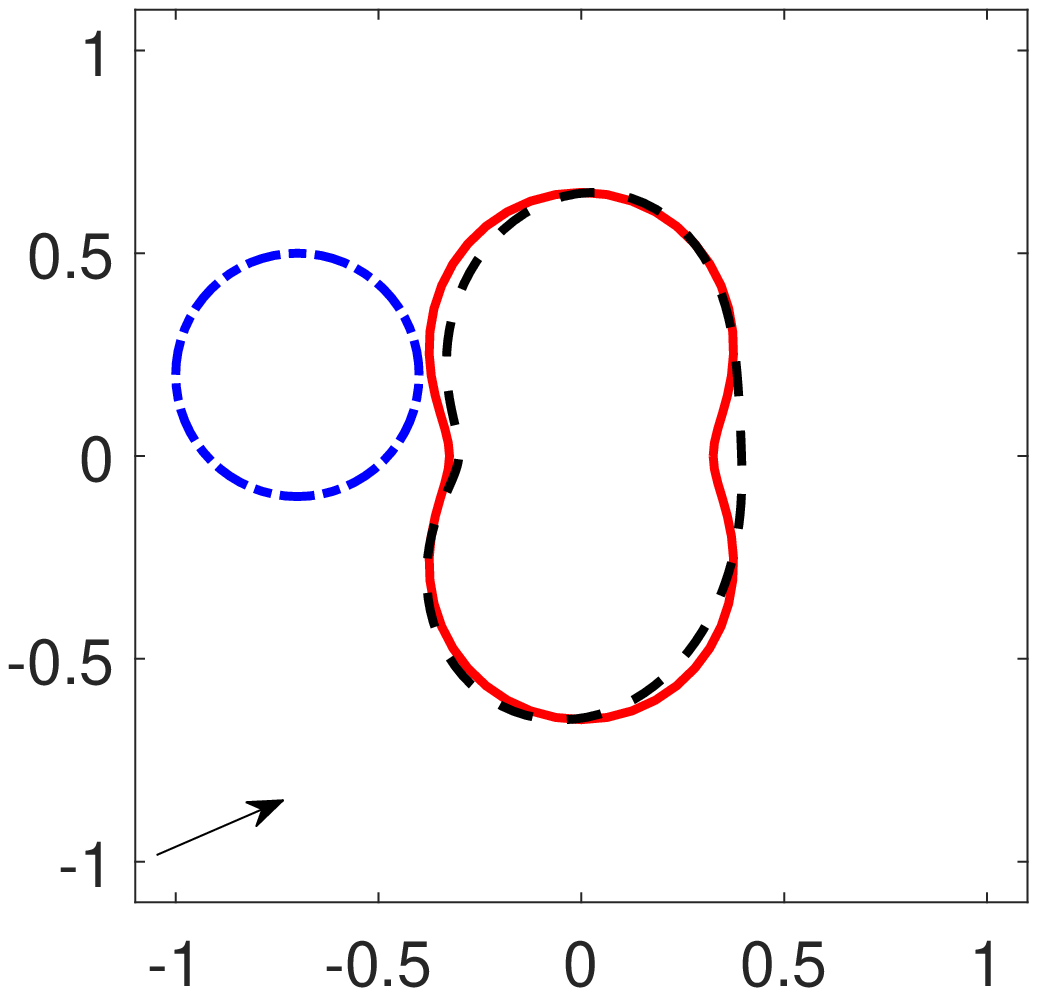}}
	\subfigure[Relative error with $1\%$ noise]
	{\includegraphics[width=0.4\textwidth]{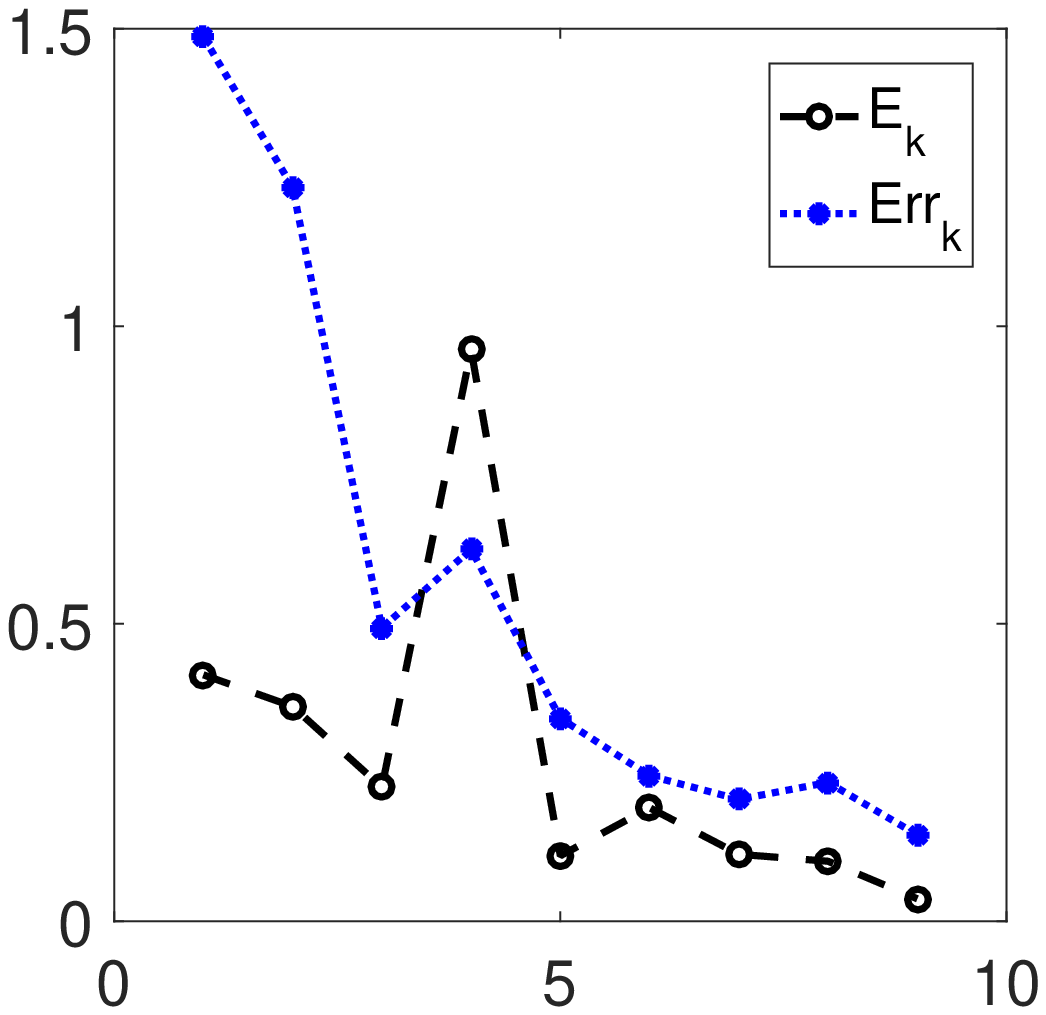}}
	\subfigure[Reconstruction with $5\%$ noise, $\epsilon=0.1$]
	{\includegraphics[width=0.4\textwidth]{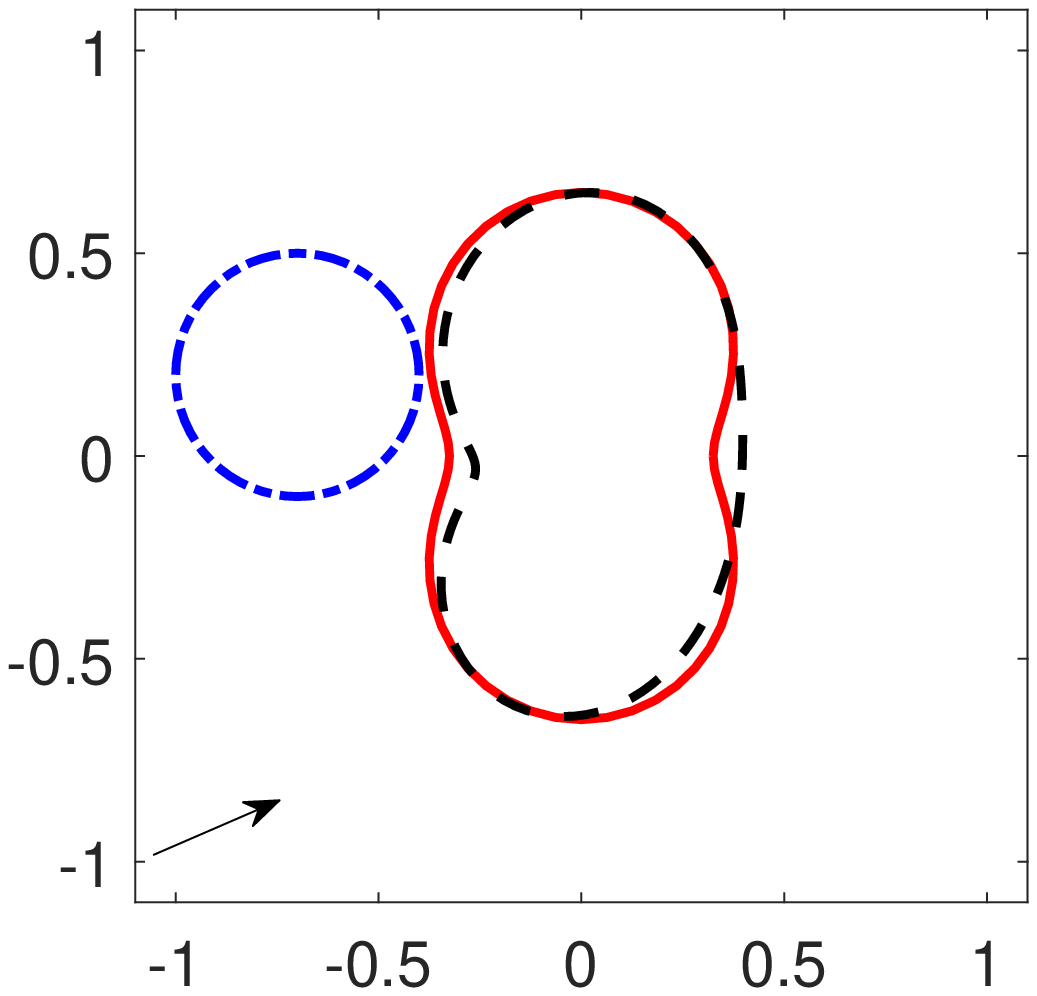}}
	\subfigure[Relative error with $5\%$ noise]
	{\includegraphics[width=0.4\textwidth]{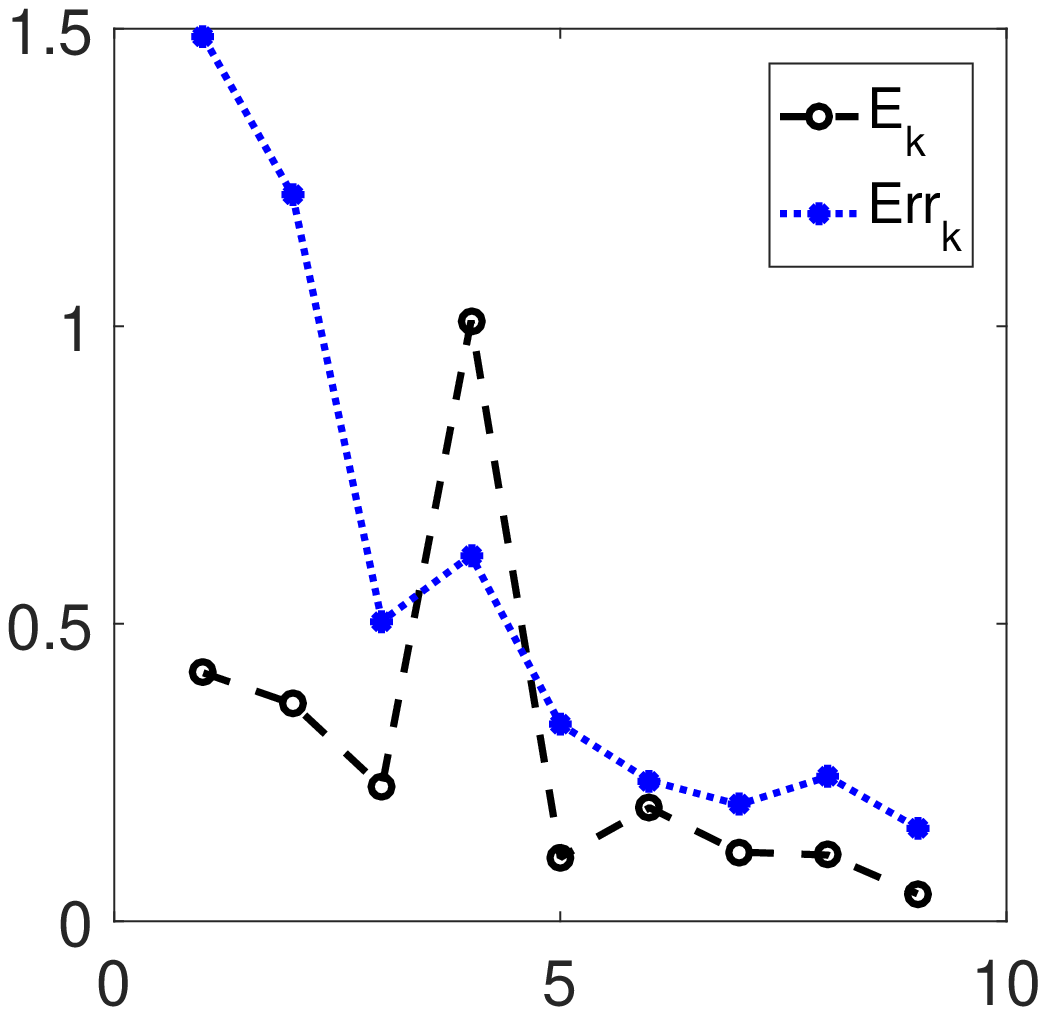}}
	\caption{Reconstructions of a peanut-shaped obstacle with different
		levels of noise by using phaseless data and a reference ball (see example 2). The initial guess is given by 
		$(c_1^{(0)},c_2^{(0)})=(-0.7, 0.2), r^{(0)}=0.3$, the incident angle
		$\theta=\pi/6$, and the reference ball is $(b_1,b_2)=(6.6, 0),
		R=0.71$.}\label{PhaselessIOSP-11}
\end{figure}

\begin{figure}
	\centering 
	\subfigure[]
	{\includegraphics[width=0.4\textwidth]{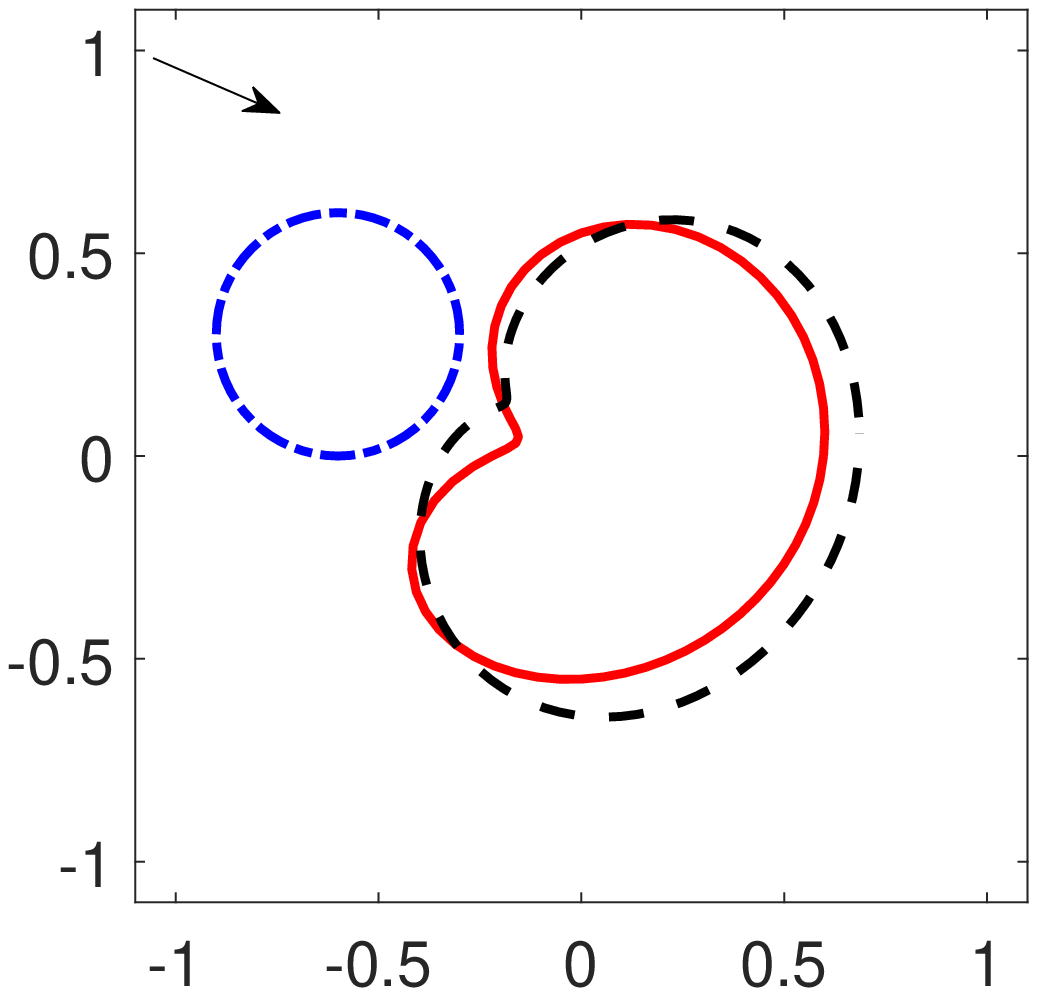}}
	\subfigure[]
	{\includegraphics[width=0.4\textwidth]{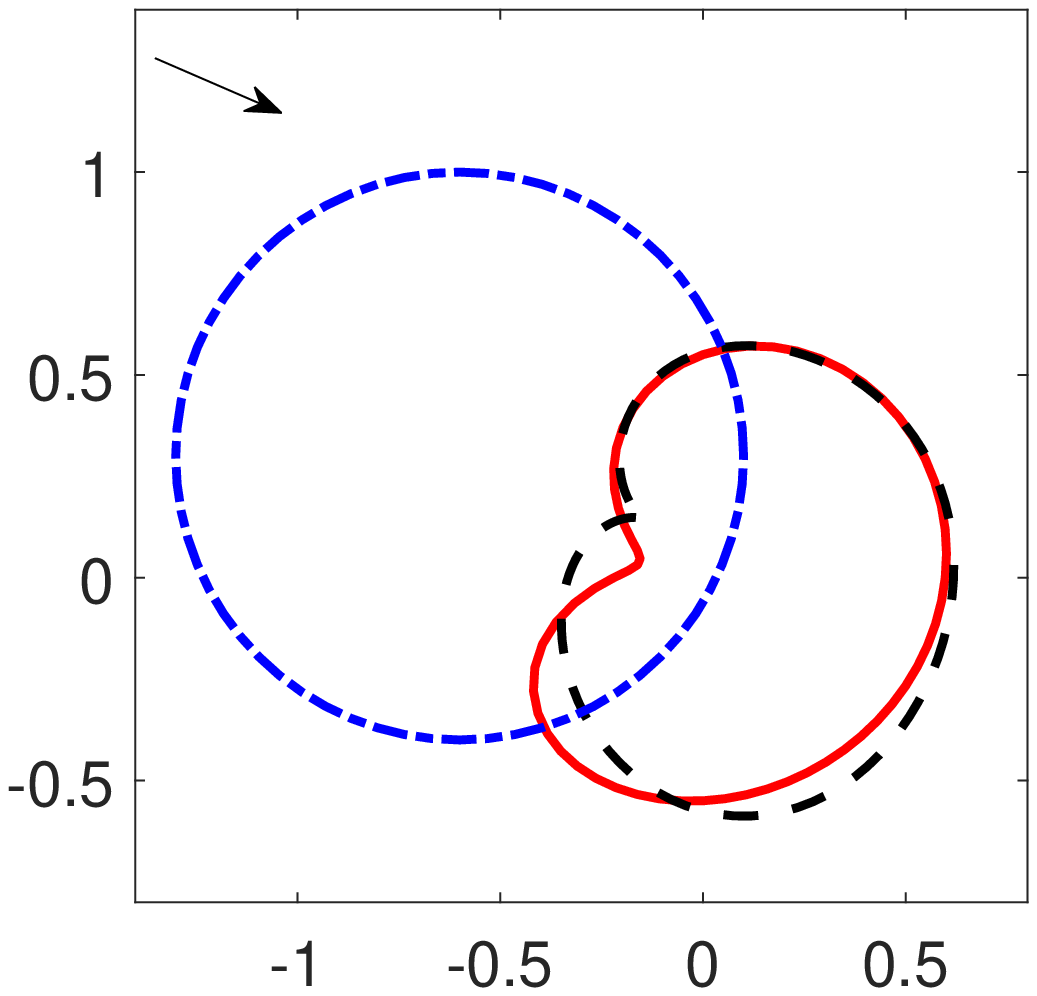}}
	\caption{Reconstructions of an apple-shaped obstacle with different initial guesses, where $1\%$ noise is added, the incident angle
		$\theta=11\pi/6$, and the reference ball is $(b_1,b_2)=(6.2, 0), R=0.65$ (see example 2). (a) $(c_1^{(0)},c_2^{(0)})=(-0.6, 0.3)$, $r^{(0)}=0.3$,
		$\epsilon=0.15$; (b) $(c_1^{(0)},c_2^{(0)})=(-0.6, 0.3)$, $r^{(0)}=0.7$,
		$\epsilon=0.15$.}\label{PhaselessIOSP-9}
\end{figure}

\begin{figure}
	\centering 
	\subfigure[]
	{\includegraphics[width=0.4\textwidth]{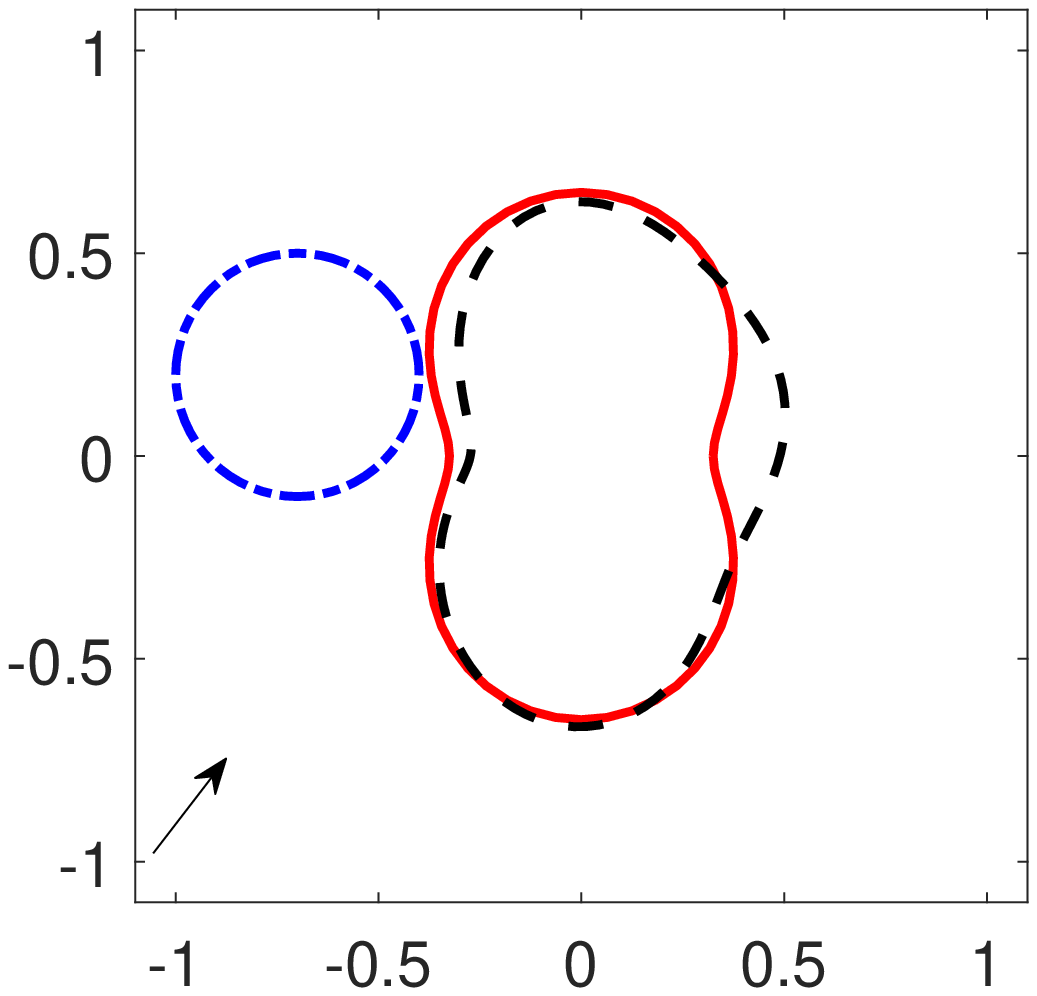}}
	\subfigure[]
	{\includegraphics[width=0.4\textwidth]{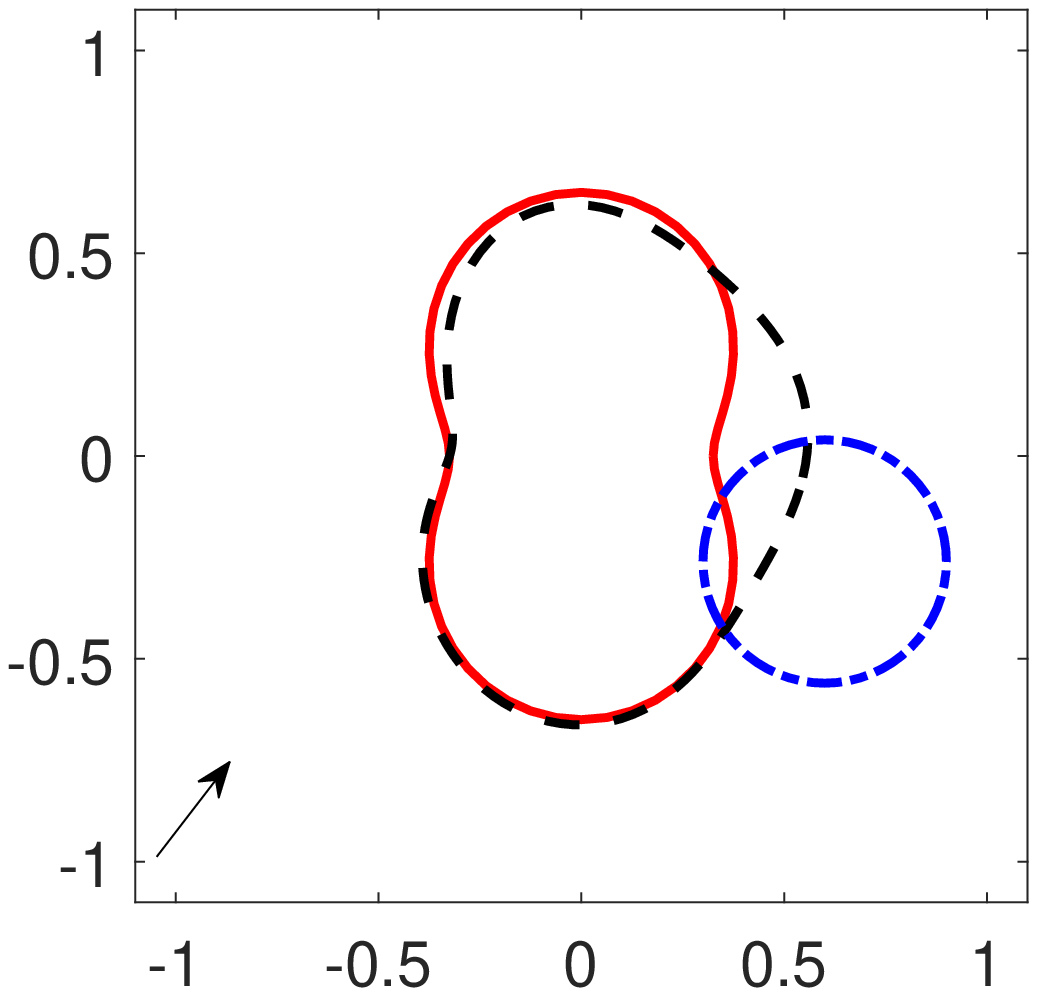}}
	\caption{Reconstructions of a peanut-shaped obstacle with different initial guesses, where $1\%$ noise is added, the incident angle
		$\theta=\pi/3$, and the reference ball is $(b_1,b_2)=(6.7, 0), R=0.67$ (see example 2). (a) $(c_1^{(0)},c_2^{(0)})=(-0.7, 0.2)$, $r^{(0)}=0.3$,
		$\epsilon=0.1$; (b) $(c_1^{(0)},c_2^{(0)})=(0.6, -0.26)$, $r^{(0)}=0.3$,
		$\epsilon=0.15$.}\label{PhaselessIOSP-12}
\end{figure}

\begin{figure}
	\centering
	\subfigure[]
	{\includegraphics[width=0.4\textwidth]{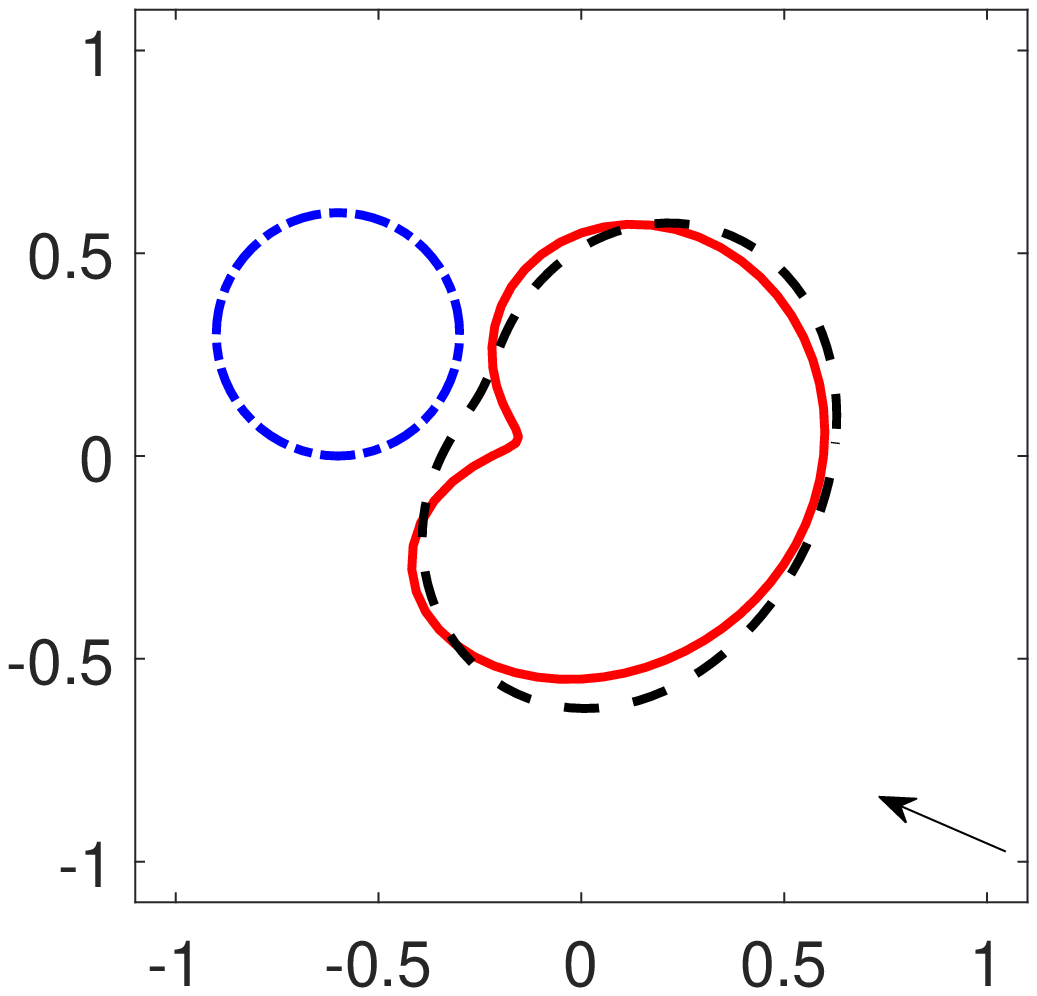}}
	\subfigure[]
	{\includegraphics[width=0.4\textwidth]{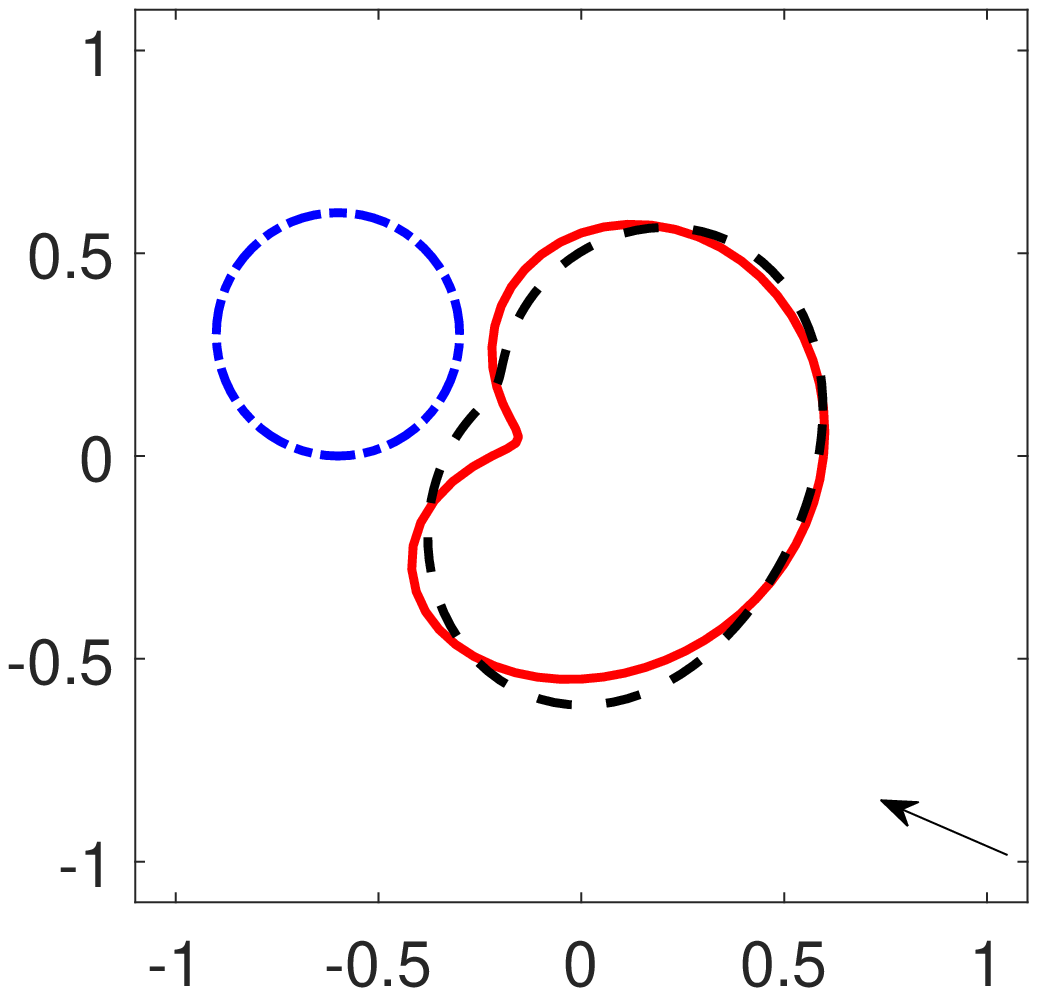}}
	\caption{Reconstructions of an apple-shaped obstacle with different reference balls, where $1\%$ noise is added, the inciedent angle 
		$\theta=5\pi/6$, and the initial guess is given by $(c_1^{(0)},c_2^{(0)})=(-0.6,
		0.3), r^{(0)}=0.3$ (see example 2). (a) $(b_1, b_2)=(6, 0), R=0.35$, $\epsilon=0.2$;
		(b) $(b_1, b_2)=(-6, 0), R=0.6$,
		$\epsilon=0.15$.}\label{PhaselessIOSP-10}
\end{figure}

\begin{figure}
	\centering
	\subfigure[]
	{\includegraphics[width=0.4\textwidth]{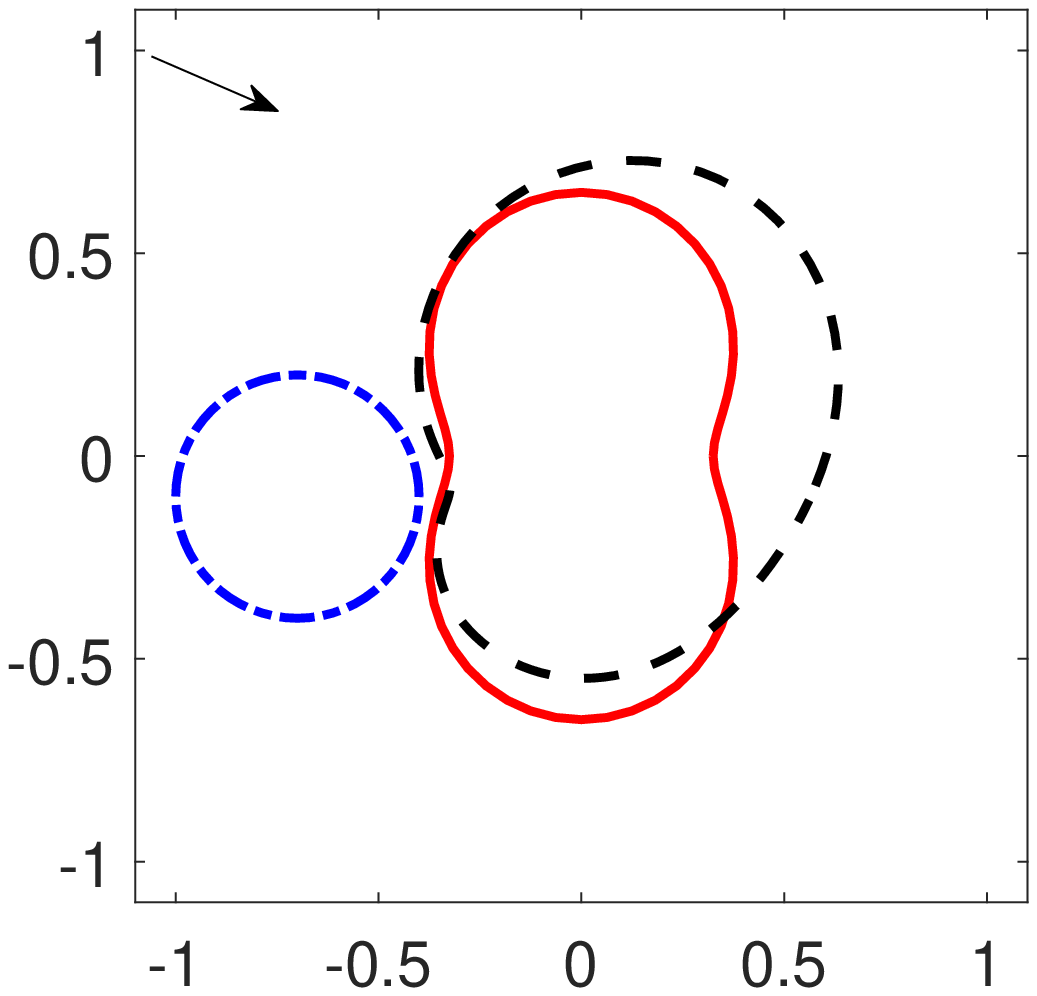}}
	\subfigure[]
	{\includegraphics[width=0.4\textwidth]{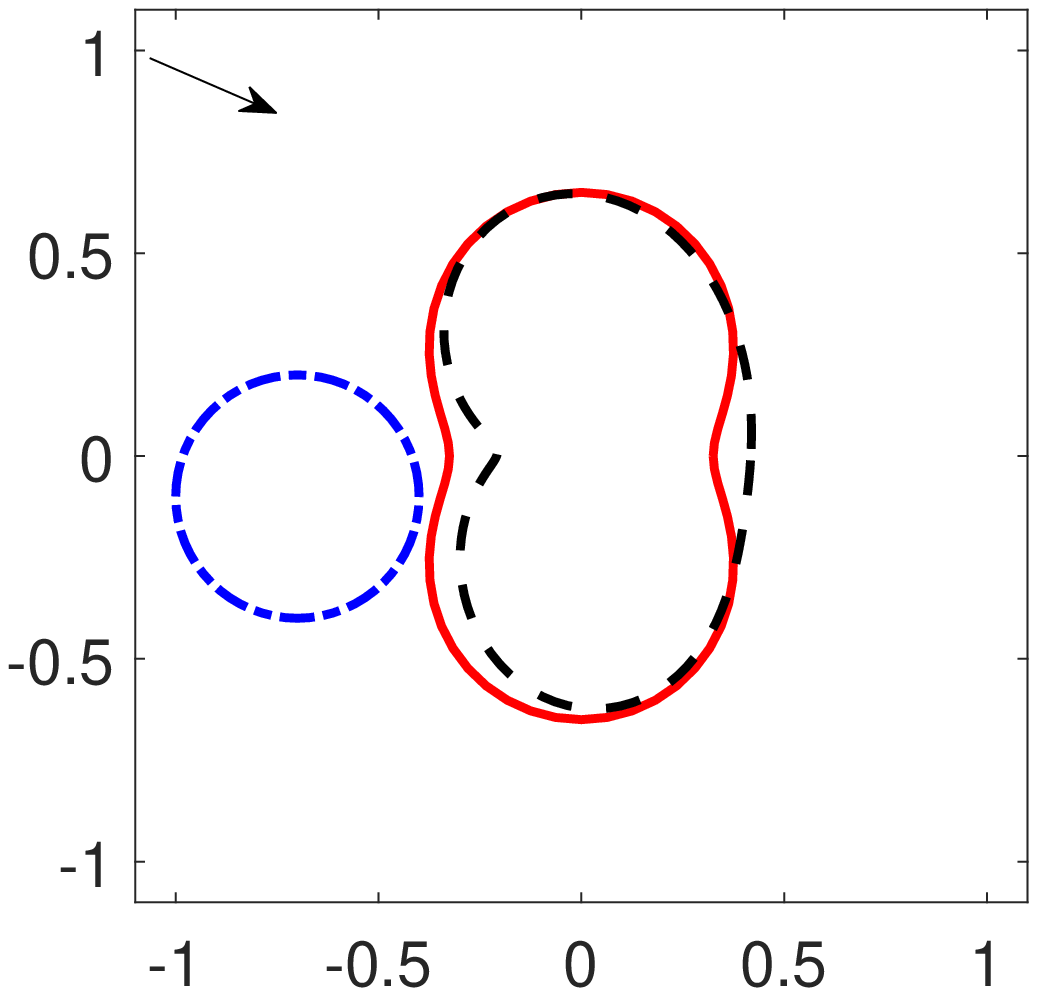}}
	\caption{Reconstructions of a peanut-shaped obstacle with different reference balls, where $1\%$ noise is added, the inciedent angle 
		$\theta=11\pi/6$, and the initial guess is given by $(c_1^{(0)},c_2^{(0)})=(-0.7,
		-0.1), r^{(0)}=0.3$ (see example 2). (a) $(b_1, b_2)=(-7.8, 0), R=0.47$, $\epsilon=0.2$;
		(b) $(b_1, b_2)=(6.6, 0), R=0.71$,
		$\epsilon=0.1$.}\label{PhaselessIOSP-13}
\end{figure}

\section{Numerical experiments}

In this section, we present some numerical examples to illustrate the feasibility of the iterative reconstruction methods. We use a single plane wave to illuminate the obstacle. The synthetic far-field data and phaseless far-field data are numerically generated  by the Nystr\"{o}m-type method described in Section 5. In order to avoid the inverse crime, the number of quadrature nodes used in the inverse solver ($n=64$) is chosen to be different from that of the forward solver ($n=100$). The noisy data $u_{\infty,\delta}$ and $|u_{\infty,\delta}|^2$ are generated in the following way,
\begin{align*}
u_{\infty,\delta}=u_{\infty}(1+\delta\breve\eta), \quad |u_{\infty,\delta}|^2=|u^\infty|^2(1+\delta\eta),
\end{align*}
where $\breve\eta=\breve\eta_1 +\mathrm{i}\breve\eta_2$, $\breve\eta_1$, $\breve\eta_2$ and $\eta$ are normally distributed random numbers ranging in $[-1,1]$, $\delta>0$ is the relative noise level. In addition,  we denote the $L^2$ relative error between the reconstructed and exact boundaries by
$$
Err_k:=\frac{\|p_D^{(k)}-p_D\|_{L^2(\Omega)}}{\|p_D\|_{L^2(\Omega)}}.
$$

In the iteration process, we obtain the update $\xi$ from a scaled Newton step with the Tikhonov regularization and $H^2$ penalty term, i.e., 
$$
\xi=\rho\bigg(\lambda\widetilde{I}+\Re(\widetilde{B}^*\widetilde{B})\bigg)^{-1}\Re(\widetilde{B}^*\widetilde{w}),
$$
where the scaling factor $\rho\geq0$ is fixed throughout the iterations. Analogously to \cite{DZhG2018}, the regularization parameter $\lambda$ in \eqref{EqualRLHuygens3} is chosen as
\[
\lambda_k:=\left\|w_\infty-S^\infty_{\kappa_{\rm a}}[\varphi_3G;p^{(k)}]\right\|_{L^2},\ k=1,2,\cdots.
\]

In all of the following figures, the exact boundary curves are displayed in solid lines, the reconstructed boundary curves are shown in dashed lines $--$, and all the initial guesses are taken to be a circle which is indicated in the dash-dotted lines $\cdot-$. The incident directions are denoted by directed line segments with arrows. Throughout all the numerical examples, we take $\lambda=3.88, \mu=2.56$, the angular frequency $\omega=0.7\pi$, the scaling factor $\rho=0.9$, and the truncation $M=6$. We present the results for two commonly used examples: an apple-shaped obstacle and a peanut-shaped obstacle. The parametrization of the exact boundary curves for these two obstacles are given in Table 1.

{\bf Example 1}: The IAEIP with far-field data. We consider the inverse problem of reconstructing an elastic obstacle from far-field data by using \textbf{Algorithm I}. The synthetic far-field data is numerically generated at 128 points, i.e. $\tilde{n}=64$. In Fig. \ref{IOSP-2} and Fig. \ref{IOSP-5}, the reconstructions of an apple-shaped and a peanut-shaped obstacles with $1\%$ and $5\%$ noise are shown, respectively. Moreover, the relative $L^2$ error $Err_k$ between the reconstructed and exact boundaries and the error $E_k$ defined in \eqref{relativeerror} are also presented with respect to the number of iterations. As we can see from the figures, the trend of two error curves is basically the same for larger number of iteration. Therefore, the choice of the stopping criteria is reasonable. The reconstructions with different initial guesses for the two curves are given in Fig. \ref{IOSP-3} and Fig. \ref{IOSP-6}, and the reconstructions with different directions of incident waves are presented in Fig. \ref{IOSP-4} and Fig. \ref{IOSP-7}. As shown in these results, the location and shape of the obstacle could be simultaneously and satisfactorily reconstructed for a single incident plane wave.

{\bf Example 2}: The IAEIP with phaseless far-field data and a reference ball. By adding a reference ball to the inverse scattering system, we consider the inverse problem of reconstructing an elastic obstacle from phaseless far-field data based on the algorithm of Table 2. The synthetic phaseless far-field data is numerically generated at 64 points, i.e. $\bar{n}=32$. The reconstructions with $1\%$ noise and $5\%$ noise are shown in Fig. \ref{PhaselessIOSP-8} and Fig. \ref{PhaselessIOSP-11}, respectively. Again, the relative $L^2$ error $Err_k$ and the error $E_k$ are presented in the figures. The reconstructions with different initial guesses for the two curves are given in Fig. \ref{PhaselessIOSP-9} and Fig. \ref{PhaselessIOSP-12}. The reconstructions with different reference balls are shown in Fig. \ref{PhaselessIOSP-10} and Fig. \ref{PhaselessIOSP-13}. From this example, we found that the translation invariance property of the phaseless far-field pattern can be broken down by introducing a reference ball. Based on this algorithm, both the location and shape of the obstacle can be satisfactorily reconstructed from the phaseless far-field data for a single incident plane wave.

\section{Conclusions}

In this paper, we have studied the two-dimensional inverse acoustic scattering problem by an elastic obstacle with the phased and phaseless far-field data for a single incident plane wave. Based on the Helmholtz decomposition, the coupled acoustic-elastic wave equation is
reformulated into a coupled boundary value problem of the Hemholtz equations, and the uniqueness of the solution for this boundary problem is proved. We investigate the jump relations for the second derivatives of single-layer potential and establish coupled boundary integral equations. We prove the well-posedness of the solution for the coupled boundary integral equations, and develop an efficient and accurate Nystr\"{o}m-type discretization to solve the coupled system. The method of nonlinear integral equations is developed for the inverse problem. In addition, we show that the phaseless far-field pattern is invariant under translation of the obstacle. To locate the obstacle, an elastic reference ball is introduced to the scattering system in order to break the translation invariance. We establish the uniqueness for the IAEIP with phaseless far-field pattern. A reference ball technique based nonlinear integral equations method is proposed for the inverse problem. Numerical results show that the location and shape of the obstacle can be satisfactorily reconstructed. Future work includes the uniqueness for the phaseless inverse scattering with one incident plane wave and the extension of the method to the three-dimensional inverse scattering problem.

\end{document}